\documentclass[11pt,a4paper]{article}
\usepackage[letterpaper,top=2cm,bottom=2cm,left=3cm,right=3cm,marginparwidth=1.75cm]{geometry}
\usepackage{amsfonts,amsgen,amstext,amsbsy,amsopn,amsfonts,amssymb,amscd}
\usepackage[leqno]{amsmath}
\usepackage[amsmath,amsthm,thmmarks]{ntheorem}
\usepackage{epsf,epsfig}
\usepackage{float}
\usepackage{dsfont}
\usepackage{ebezier,eepic}
\usepackage{color}
\usepackage{tikz}
\usepackage{multirow}
\usepackage{mathrsfs}
\usepackage{graphicx}
\usepackage{subfigure}
\newtheorem{definition}{Definition} [section]
\newtheorem{theorem}[definition]{Theorem}
\newtheorem{lemma}[definition]{Lemma}
\newtheorem{proposition}[definition]{Proposition}

\newtheorem{corollary}[definition]{Corollary}
\newtheorem{conjecture}[definition]{Conjecture}
\newtheorem{claim}[definition]{Claim}

\newtheorem{fact}[definition]{Fact}

\makeatletter \@addtoreset{equation}{section}

\def\hf{\mathcal{F}}
\def\hg{\mathcal{G}}
\def\hh{\mathcal{H}}
\def\thf{\tilde{\mathcal{F}}}
\def\ha{\mathcal{A}}
\def\hb{\mathcal{B}}
\def\hs{\mathcal{S}}
\def\hl{\mathcal{L}}
\def\hk{\mathcal{K}}
\def\hp{\mathcal{P}}
\def\hr{\mathcal{R}}

\title{The Exact Erd\H{o}s-Ko-Rado Theorem for 3-wise $t$-intersecting uniform families}
\author{Peter Frankl\footnote{R\'{e}nyi Institute, Budapest, Hungary. Email: \texttt{frankl.peter@renyi.hu}.}\quad\quad
Jian Wang \footnote{Department of Mathematics, Sichuan University, Chengdu, 610065, China. Research supported by National Natural Science Foundation of China No. 12471316. Email: \texttt{wangjianmath01@scu.edu.cn}.}
}
\date{}

\begin{document}

\maketitle

\begin{abstract}
Let $\mathcal{F}$ be a family of $k$-element subsets of $\{1,2,\ldots,n\}$. For $t\geq 1$, we say that $\mathcal{F}$ is  {\it 3-wise $t$-intersecting} if $|F_1\cap F_2\cap F_3|\geq t$ for all $F_1,F_2,F_3\in \mathcal{F}$. In the present paper, we prove that if $\mathcal{F}$ is 3-wise $t$-intersecting and $n\geq \frac{\sqrt{4t+9}-1}{2}k$, $k>t\geq 46$, then $|\mathcal{F}|\leq \binom{n-t}{k-t}$. The restriction on $n$ is asymptotically best possible. The corresponding result for non-trivial 3-wise $t$-intersecting families is obtained as well for $n\geq \frac{\sqrt{4t+9}-1}{2}k$ and $k>t\geq 55$.
\end{abstract}

\section{Introduction}

Let $[n]$ be the standard $n$-element set $\{1,2,\ldots,n\}$. Let $2^{[n]}$ denote the power set of $[n]$ and let $\binom{[n]}{k}$ denote the family of all $k$-element subsets of $[n]$.

\begin{definition}
A family $\hf\subset 2^{[n]}$ is called $r$-wise $t$-intersecting if $|F_1\cap F_2\cap \ldots\cap F_r|\geq t$ for all $F_1,F_2,\ldots,F_r\in \hf$, $r\geq 2$, $t\geq 1$.
\end{definition}

For a fixed $t$-set $T$, the full $t$-star $\hs_T=\{S\subset [n]\colon T\subset S\}$ is the simplest construction for a large family that is $r$-wise $t$-intersecting.

If $\hf$ is $r$-wise $t$-intersecting but $\cap \hf: =\cap \{F\colon F\in \hf\}$ has size less than $t$, then $\hf$ is called {\it non-trivial}.

For $n\geq s$, define $\ha(n,s)=\{A\subset [n]\colon |A\cap [s]|\geq s-1 \}$. It is easy to verify that $\ha(n,r+t)$ is $r$-wise $t$-intersecting and non-trivial.

In the present paper we deal with uniform families, i.e., $\hf\subset \binom{[n]}{k}$. For $n>k\geq s-1$, define
\[
\ha(n,k,s) =\ha(n,s) \cap \binom{[n]}{k}.
\]

Simple calculation shows that
\[
|\ha(n,k,t+2)| \gtreqqless \binom{n-t}{k-t} \mbox{ is equivalent to } n  \lesseqqgtr (t+1)(k-t+1).
\]

The next result shows the relevance of these inequalities.

\begin{theorem}[Exact Erd\H{o}s-Ko-Rado Theorem, \cite{EKR}, \cite{F78}, \cite{W84}]\label{thm-EKR}
Suppose that  $\mathcal{F}\subset \binom{[n]}{k}$ is 2-wise $t$-intersecting, $k\geq t\geq 1$, $n\geq (t+1)(k-t+1)$. Then
\[
|\mathcal{F}| \leq \binom{n-t}{k-t}.
\]
\end{theorem}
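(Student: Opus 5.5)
A proof of the Exact EKR theorem itself (the classical result of Erd\H{o}s--Ko--Rado, Frankl and Wilson) would go through Wilson's eigenvalue method, which covers the full range $n\geq (t+1)(k-t+1)$ in one stroke. For the sharp range a combinatorial shifting argument alone only reaches $n\geq (t+1)(k-t+1)$ for $t\geq 15$ (Frankl), so the spectral route is the one I would take.

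\emph{Setup.} Work in the Johnson scheme $J(n,k)$. Let $A_i$ be the $0/1$ matrix indexed by $\binom{[n]}{k}$ with $(A_i)_{F,G}=1$ iff $|F\cap G|=k-i$. A family $\hf$ is $t$-intersecting exactly when its characteristic vector $x$ satisfies
\[
x^{T}A_i x=0 \quad\text{for all } i>k-t .
\]
The aim is a pseudo-adjacency matrix $B=\sum_{i>k-t} c_i A_i$ (so $x^TBx=0$) whose eigenvalues on the eigenspaces $V_0,V_1,\dots,V_k$ of the scheme satisfy:
\begin{itemize}
\item $\lambda_0=\binom{n}{k}\binom{n-t}{k-t}^{-1}-1$ on $V_0$ (constants);
\item $\lambda_j\geq -1$ for every $j\geq 1$.
\end{itemize}
Then $M=I+B$ is positive semidefinite on the complement of the constants. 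Writing $x=\frac{|\hf|}{\binom nk}\mathbf{1}+y$, we get
\[
|\hf|=x^TMx\geq \frac{|\hf|^2}{\binom nk}\bigl(1+\lambda_0\bigr),
\]
which gives $|\hf|\leq \binom{n-t}{k-t}$. This is the Hoffman/Delsarte bound.

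\emph{Construction of $B$.} Following Wilson, I would define $B$ through the inclusion matrices $W_{it}$ (rows indexed by $i$-sets, columns by $k$-sets). Concretely,
\[
B=\sum_{i=0}^{t-1}(-1)^{t-1-i}\binom{k-1-i}{k-t}^{-1}\binom{n-k-t+i}{k-t}^{-1}\,\overline W_{i}^{T}\,\overline W_{i},
\]
where $\overline W_i$ is the matrix recording disjointness of $i$-sets from the complement. The key property is that $\overline W_i^T\overline W_i$ is supported on pairs $F,G$ with $|F\cap G|$ small. The choice of alternating coefficients is made so that all entries with $|F\cap G|\geq t$ cancel, which makes $B$ a valid pseudo-adjacency matrix.

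\emph{Eigenvalue computation.} Since each $\overline W_i^T\overline W_i$ lies in the Bose--Mesner algebra, its eigenvalues on $V_j$ are explicit: a product of binomials that vanishes for $j>i$. Summing gives closed-form $\lambda_j$. The verification splits into two ranges:
\begin{itemize}
\item For $j\geq t$ the eigenvalue $\lambda_j$ is $0$ (and one arranges $\lambda_j\geq -1$ trivially).
\item For $1\leq j\leq t$ one shows $\lambda_j\geq -1$, with equality at $j=t$ when $n=(t+1)(k-t+1)$.
\end{itemize}

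\emph{Main obstacle.} The hard part is verifying $\lambda_j\geq -1$ for $1\leq j\leq t-1$ uniformly down to $n=(t+1)(k-t+1)$. I would write $\lambda_j$ as an alternating binomial sum and convert it, via a hypergeometric identity (Vandermonde-type), into a single ratio of products. I would then show the sequence $|\lambda_j|$ is monotone in $j$, so that only $j=1$ and $j=t$ need checking. The value at $j=1$ reduces precisely to the inequality $n\geq (t+1)(k-t+1)$.

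As an alternative fallback for large $n$, one can instead shift $\hf$ to a left-compressed family. One then shows every $F$ satisfies $|F\cap[t+2i]|\geq t+i$ for some $i$, and counts, which recovers the bound via the estimate $|\ha(n,k,t+2)|$ versus $\binom{n-t}{k-t}$ noted above. This argument, however, does not reach the exact threshold for small $t$; that is why the spectral approach is the main plan.
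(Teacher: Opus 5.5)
The paper gives no proof of this statement; it quotes it from \cite{EKR}, \cite{F78}, \cite{W84}, and Wilson's spectral argument is indeed the one that covers the whole range $n\ge (t+1)(k-t+1)$. Your framework is sound. A matrix $B$ in the Bose--Mesner algebra, supported on pairs with $|F\cap G|<t$, with $\lambda_0=\binom nk\binom{n-t}{k-t}^{-1}-1$ and $\lambda_j\ge -1$ for $j\ge 1$, yields $|\hf|\le\binom{n-t}{k-t}$ exactly as you compute.

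The gap is that the $B$ you write down is not such a matrix. An $i$-set is disjoint from the complement of $K$ iff it lies in $K$. So your $\overline W_i$ is the inclusion matrix, and $(\overline W_i^{T}\overline W_i)_{F,G}=\binom{|F\cap G|}{i}$ lives on \emph{large} intersections, not small ones. The disjointness reading, with entries $\binom{n-2k+|F\cap G|}{i}$, fails the same way. Under either reading, every entry of $\sum_{i<t}c_i\overline W_i^{T}\overline W_i$ is a polynomial of degree less than $t$ in $\mu=|F\cap G|$. It therefore cannot vanish at all of $\mu=t,\dots,k$ when $k\ge 2t-1$ unless $B=0$. Already for $t=1$ your formula gives $B=\binom{n-k-1}{k-1}^{-1}J$, so $x^{T}Bx=|\hf|^2/\binom{n-k-1}{k-1}\neq 0$.

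The eigenvalue plan is also misdirected. Suppose some $B$ works, and apply the bound to a $t$-star $\hs_T$. Equality $|\hs_T|=\binom{n-t}{k-t}$ forces $(1+\lambda_j)\|y_j\|^2=0$ for every $j\ge 1$. The star's vector has a nonzero component in each $V_j$, $1\le j\le t$, because its $S_n$-translates span $V_0\oplus\dots\oplus V_t$. Hence every valid $B$ has $\lambda_1=\dots=\lambda_t=-1$, for every $n$. This has three consequences:
\begin{itemize}
\item $\lambda_t=0$ is impossible. Any combination of the $\overline W_i^{T}\overline W_i$, $i<t$, has $\lambda_t=0$, which is a second proof that your ansatz fails.
\item The hypothesis $n\ge(t+1)(k-t+1)$ cannot enter through $j=1$ or $j=t$.
\item Your planned monotonicity reduction to $j=1$ and $j=t$ checks the wrong eigenvalues.
\end{itemize}

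What is needed instead:
\begin{itemize}
\item Take $B=\sum_{\mu<t}\phi_\mu D_\mu$, where $D_\mu$ is the ``$|F\cap G|=\mu$'' matrix.
\item Choose the $\phi_\mu$ so that $\lambda_1=\dots=\lambda_t=-1$. Then $\lambda_0$ is automatically correct, by evaluating $x^{T}Bx=0$ on a star.
\item Prove $\lambda_j\ge -1$ for $t<j\le k$.
\end{itemize}
That last estimate is the real content of Wilson's paper, and it is where $(t+1)(k-t+1)$ appears. For $t=1$, $B$ is the Kneser matrix divided by $\binom{n-k-1}{k-1}$, with $\lambda_j=(-1)^j\binom{n-k-j}{k-j}/\binom{n-k-1}{k-1}$, and $n\ge 2k$ is exactly what gives $|\lambda_j|\le 1$ for $j\ge 2$.

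Your proposal supplies neither a valid $B$ nor this estimate. The shifting fallback, as you note yourself, only reaches Frankl's range $t\ge 15$. As written, this is not a proof.
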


Let us also mention the following result.

\begin{theorem}[\cite{FF91}, \cite{AK0}]\label{thm-1.3}
Suppose that $\hf\subset \binom{[n]}{k}$ is  2-wise $t$-intersecting, $k\geq t\geq 2$, $\frac{t+3}{2}(k-t+1)\leq n\leq (t+1)(k-t+1)$. Then
\[
|\mathcal{F}| \leq |\ha(n,k,t+2)|.
\]
\end{theorem}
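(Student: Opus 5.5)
Theorem~\ref{thm-1.3} is the Frankl--F\"uredi / Ahlswede--Khachatrian bound for the range just below the exact EKR threshold. I would prove it by shifting plus the generating-set method of Ahlswede and Khachatrian.

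\emph{Reduction to shifted families.} First reduce to a shifted family: the $(i,j)$-shifts preserve $2$-wise $t$-intersection and $|\hf|$, so we may assume $\hf$ is shifted and maximal. Define the generating family $g(\hf)$ as the set of inclusion-minimal sets $G$ such that $\{F\in\binom{[n]}{k}: G\subset F\}\subset\hf$. Let $s(\hf)$ be the largest element appearing in a minimal generator.

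\emph{Pushing pushes.} If $s(\hf)>t+2$, consider the generators $G$ with $\max G=s$. Split them by whether $G\setminus\{s\}$ together with some other such generator loses intersection. Form two modified families:
\begin{itemize}
\item $\hf_1$, obtained by replacing these $G$ by $G\setminus\{s\}$ within a suitable class;
\item $\hf_2$, obtained by deleting the $G$ and adding their "pushed" counterparts $G\cup\{s+1\}\setminus\{i\}$.
\end{itemize}
The standard AK counting shows that $\max(|\hf_1|,|\hf_2|)>|\hf|$ unless $n$ lies in a narrow window. Here $n\ge\frac{t+3}{2}(k-t+1)$ ensures that pushing downward never loses. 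Concretely, the comparison reduces to an inequality between $\binom{n-s}{k-|G|}$-type terms, and the lower bound on $n$ is what makes the ratio favor $\hf_1$ or $\hf_2$.

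\emph{Identifying the optimum.} Iterating, we may assume all generators lie in $[t+2r]$ for the optimal $r$. Then $\hf$ is contained in some Frankl family
\[
\hf_r=\{F: |F\cap[t+2r]|\ge t+r\}.
\]
Compare $|\hf_r|$ with $|\hf_{r+1}|$: the ratio is monotone and crosses $1$ exactly at $n=(k-t+1)(2+\frac{t-1}{r+1})$. For $r=1$ this threshold is $\frac{t+3}{2}(k-t+1)$, and for $r=0$ it is $(t+1)(k-t+1)$. Hence in the stated range $\hf_1=\ha(n,k,t+2)$ is maximal.

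\emph{Main obstacle.} The hardest step is the pushing-pushes comparison, namely proving that the generator extremal element can be reduced to $t+2$ throughout the whole range. Here I would follow the AK comparison lemma verbatim, or, if allowed, simply invoke the complete intersection theorem from \cite{AK0}, of which this statement is the $r=1$ case.
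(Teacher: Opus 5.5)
The paper gives no proof of this statement; it is quoted from Frankl--F\"uredi and Ahlswede--Khachatrian. Your fallback, invoking the Complete Intersection Theorem with $r=1$ (its tie cases cover both endpoints of the range), is therefore exactly the paper's route.

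As a self-contained proof, however, your sketch has a genuine gap at the one step that carries all the content. Replacing $G$ by $G\cup\{s+1\}\setminus\{i\}$ does not lower $s$, and it is not part of the Ahlswede--Khachatrian argument. The claim that one of two modifications wins ``unless $n$ lies in a narrow window'' hides the only place where $n\geq\frac{t+3}{2}(k-t+1)$ is actually used. Also, ``all generators lie in $[t+2r]$'' does not put $\hf$ inside a Frankl family when $r\geq 2$; Hilton--Milner-type traces on $[t+4]$ are counterexamples. For $r=1$ it is true, by the short argument at the end.

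Here is what the missing step has to be. Let $s=s_+\geq t+3$, and let $\hg_i$ be the minimal generators of size $i$ that contain $s$.
\begin{itemize}
\item Shiftedness implies that two generators containing $s$ and sharing exactly $t$ elements have union $[s]$. Hence their sizes satisfy $i+i'=s+t$.
\item If $i\neq i'$, either shorten every $G\in\hg_i$ to $G\setminus\{s\}$ and delete $\hg_{i'}$, or do the reverse. The first option gains at least $|\hg_i|\binom{n-s}{k-i+1}$ sets and loses at most $|\hg_{i'}|\binom{n-s}{k-i'}$. Multiplying the two alternatives shows that one of them does not decrease $|\hf|$ as soon as $n\geq 2k-t+1$.
\item The threshold appears only when $i=i'=\frac{s+t}{2}$, because then the shortened sets may meet each other in only $t-1$ elements. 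In that case choose $x\in[s-1]$ maximizing the number of $G\in\hg_i$ with $x\notin G$. Averaging over the $(s-i)$-sets $[s]\setminus G$ shows this is at least a $\frac{s-i}{s-1}$ fraction of $\hg_i$. Shorten exactly those $G$ and delete the rest.
\item For $s=t+4$ this gives a gain/loss ratio of at least $\frac{2}{t+1}\cdot\frac{n-k-2}{k-t-1}$. This is at least $1$ precisely when $n\geq\frac{t+3}{2}(k-t+1)$, and larger $s$ only helps.
\end{itemize}

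Once $s_+\leq t+2$, the traces $F\cap[t+2]$ form a $t$-intersecting family. Either some trace is a $t$-set, so $\hf$ lies in a $t$-star, or every trace has size at least $t+1$, so $\hf\subset\ha(n,k,t+2)$. Only at this point is the upper bound $n\leq (t+1)(k-t+1)$ used. Comparing $\ha(n,k,t+2)$ with the $r=2$ Frankl family plays no role.
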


For $s\leq k+1$, define
\begin{align*}
\mathcal{B}(n,k,s):=&\left\{B\in \binom{[n]}{k}\colon [s-2]\subset B,\ B\cap [s-1,k+1]\neq \emptyset\right\}\\[3pt]
&\ \bigcup \left\{[k+1]\setminus \{j\}\colon 1\leq j\leq s-2\right\}.
\end{align*}
It is easy to check that $\mathcal{B}(n,k,t+r)$ is non-trivial and $r$-wise $t$-intersecting. Moreover,
\begin{align*}
&|\ha(n,k,t+r)| =(t+r)\binom{n-t-r}{k-t-r+1}+\binom{n-t-r}{k-t-r}, \\[3pt]
&|\hb(n,k,t+r)| = \binom{n-t-r+2}{k-t-r+2}-\binom{n-k-1}{k-t-r+2}+t+r-2.
\end{align*}

Define
\begin{align*}
& g(n,k,r,t):=\max \left\{|\mathcal{F}|\colon \mathcal{F}\subset \binom{[n]}{k}\mbox{ is $r$-wise $t$-intersecting}\right\}, \\[3pt]
 & h(n,k,r,t) := \max \left\{|\mathcal{F}|\colon \mathcal{F}\subset \binom{[n]}{k}\mbox{ is non-trivial $r$-wise $t$-intersecting}\right\}.
\end{align*}
Theorem \ref{thm-EKR} shows that
\[
g(n,k,2,t)=\binom{n-t}{k-t} \mbox{ for } n\geq (t+1)(k-t+1).
\]

\begin{theorem}[\cite{HM}, \cite{F78-2}, \cite{AK}]\label{thm-HMF}
For $n\geq (t+1)(k-t+1)$,
\[
h(n,k,2,t)= \max\left\{|\mathcal{A}(n,k,t+2)|,|\mathcal{B}(n,k,t+2)|\right\}.
\]
\end{theorem}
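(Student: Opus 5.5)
The plan is to reduce to shifted families and then run a comparison argument on the minimal generating sets, in the spirit of Ahlswede and Khachatrian. Let $\hf\subset\binom{[n]}{k}$ be non-trivial, $2$-wise $t$-intersecting, of maximum size, with $n\geq (t+1)(k-t+1)$.

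\textbf{Step 1: shifting.} I would apply the shifts $S_{ij}$, $i<j$, one at a time. Each shift preserves size and the $t$-intersecting property. If some shift would make the family trivial, I stop just before it. At that moment the family has a very rigid form: all but possibly the sets avoiding $j$ contain a common $(t-1)$-set together with one of $i,j$. For that configuration I would bound $|\hf|$ directly and show it is at most $|\hb(n,k,t+2)|$. The bound should be a Hilton--Milner type computation: a fixed set must meet all members in at least $t$ elements.

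So I may assume $\hf$ is shifted and non-trivial. Then $[t+1]\setminus\{j\}$-type sets appear, and in particular $|\cap\hf|\leq t-1$ forces $\hf\not\subset\hs_{[t]}$.

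\textbf{Step 2: split by how $\hf$ meets $[t+2]$.} I would set $\hf_0=\{F\in\hf: [t]\subset F\}$ and $\hf_1=\hf\setminus\hf_0$. Since $\hf$ is shifted and $\hf_1\neq\emptyset$, the set $[t+1]\setminus\{1\}\cup\{\ldots\}$ pattern shows that every member of $\hf_0$ must meet a fixed short set. If $\hf_1$ contains a set missing some element of $[t]$ but containing $[t+1,k+1]$-type elements only, I land in the $\hb$ regime. Otherwise all of $\hf$ lives in $\{F:|F\cap[t+2]|\geq t+1\}$ up to a "tail".

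\textbf{Step 3: the generating-set comparison.} This is the heart, and the step I expect to be the main obstacle. For a shifted $t$-intersecting family, consider its minimal generating family $\hg$ (minimal sets whose up-closure within $\binom{[n]}{k}$ contains $\hf$), and let $s=\max\{\max G: G\in\hg\}$. Following the pushing–pulling method of Ahlswede and Khachatrian, I would take the generators with maximal element $s$ and split them by size. Replacing them by either their "pushed" versions (drop $s$) or their "pulled" versions keeps the family $t$-intersecting and non-trivial. A weighted counting inequality shows one of the two replacements does not decrease $|\hf|$ when $n\geq (t+1)(k-t+1)$, and strictly increases it unless we are already at $s\leq t+2$ or at the $\hb$ configuration with $s=k+1$.

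Iterating drives $\hf$ to one of the terminal families. In the generic case $s=t+2$ this gives $\ha(n,k,t+2)$, since the only non-trivial $t$-intersecting family generated inside $[t+2]$ is $\{A:|A\cap[t+2]|\geq t+1\}$. In the other case the structure is forced to be $\hb(n,k,t+2)$. The delicate point is ensuring the push/pull operations never make the family trivial. For that I would track the two generators witnessing non-triviality separately and verify by hand that the pulled family still misses a common $t$-set.

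\textbf{Step 4: conclusion.} Combining the steps gives $|\hf|\leq\max\{|\ha(n,k,t+2)|,|\hb(n,k,t+2)|\}$. Both families are non-trivial and $2$-wise $t$-intersecting, as already noted, so equality is attained and the formula for $h(n,k,2,t)$ follows.
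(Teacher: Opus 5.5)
The paper does not prove this theorem; it quotes it from Hilton--Milner, Frankl and Ahlswede--Khachatrian. So your outline has to stand on its own, and it has a genuine gap in Step 3, exactly at the point you call delicate.

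Your claim is that, for a shifted non-trivial family, one of the two replacements stays non-trivial and does not decrease $|\mathcal{F}|$, and strictly increases it unless $s\le t+2$ or $\mathcal{F}=\mathcal{B}(n,k,t+2)$. This claim is false. For $t+2\le s\le k+1$, let $\mathcal{H}_s$ be the family generated by $[t]\cup\{x\}$ ($t+1\le x\le s$) and by $[s]\setminus\{j\}$ ($j\in[t]$). It has the following properties:
\begin{itemize}
\item It is shifted, non-trivial and $t$-intersecting.
\item $\mathcal{H}_{t+2}=\mathcal{A}(n,k,t+2)$ and $\mathcal{H}_{k+1}=\mathcal{B}(n,k,t+2)$.
\item $|\mathcal{H}_s|=\binom{n-t}{k-t}-\binom{n-s}{k-t}+t\binom{n-s}{k-s+1}$.
\end{itemize}
Its generators with maximal element $s$ are $[t]\cup\{s\}$ and the sets $[s]\setminus\{j\}$; their sizes add up to $s+t$. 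Deleting $s$ from $[t]\cup\{s\}$ forces you to discard the $[s]\setminus\{j\}$, and the result is the full star, which is trivial. Deleting $s$ from the $[s]\setminus\{j\}$ forces you to discard $[t]\cup\{s\}$, and the result is $\mathcal{H}_{s-1}$, where
\[
|\mathcal{H}_s|-|\mathcal{H}_{s-1}|=\binom{n-s}{k-t-1}-t\binom{n-s}{k-s+2}.
\]
This is positive for $t+4\le s\le k$ and $n$ large. So your iteration halts at a non-terminal family.

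This is not an exceptional case to be checked by hand. The Ahlswede--Khachatrian size comparison only tells you that one of the two pushes is an improvement, and here that is precisely the push that trivializes the family. What is missing is:
\begin{itemize}
\item a description of all shifted non-trivial families at which every non-trivializing move loses size;
\item a direct comparison of those families with $\mathcal{A}(n,k,t+2)$ and $\mathcal{B}(n,k,t+2)$.
\end{itemize}
For the chain above, the difference is negative at $s=t+3$ (for $t\ge 2$) and positive for larger $s$ when $n$ is large. Analysing this sign pattern is what puts the maximum at an endpoint, and it is where $\max\{|\mathcal{A}(n,k,t+2)|,|\mathcal{B}(n,k,t+2)|\}$ comes from. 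That analysis is the real content of the non-trivial intersection theorem, and your outline replaces it by an assertion. Your ``pulled versions'' are also never defined. If they are meant to move from $\mathcal{H}_s$ to $\mathcal{H}_{s+1}$, you lose the decreasing potential $s^+$ that makes the iteration terminate.

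Some smaller points. Step 1 can be completed for $t\ge 2$. At the stuck moment every member contains the $(t-1)$-set $S=\cap\mathcal{F}$. Then $\{F\setminus S\colon F\in\mathcal{F}\}$ is a non-trivial intersecting $(k-t+1)$-uniform family on $n-t+1\ge 2(k-t+1)+1$ points. The Hilton--Milner theorem then gives $|\mathcal{F}|\le\binom{n-t}{k-t}-\binom{n-k-1}{k-t}+1<|\mathcal{B}(n,k,t+2)|$, a reduction analogous to the paper's shifting proposition for $r\ge 3$. For $t=1$, however, this step \emph{is} the Hilton--Milner theorem, so it needs a genuine proof rather than a computation. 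Step 2 does not produce any usable statement and is not used later. Finally, the two-sided comparison in Step 3 is unavailable when the two size classes coincide ($2i=s^++t$); Ahlswede and Khachatrian need a separate splitting argument there.
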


Let us mention that in view of Theorem \ref{thm-1.3}, for $\frac{t+3}{2}(k-t+1) \leq n\leq (t+1)(k-t+1)$,
\[
h(n,k,2,t)=|\mathcal{A}(n,k,t+2)|.
\]

Let us recall an easy statement.

\begin{fact}[cf. e.g., \cite{F87}]\label{fact-key}
 If $\mathcal{F}$ is non-trivial $r$-wise $t$-intersecting, then $\mathcal{F}$ is $(r-1)$-wise $(t+1)$-intersecting.
\end{fact}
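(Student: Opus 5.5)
We argue by contradiction. Fix any $F_1,\ldots,F_{r-1}\in\mathcal{F}$ (repetitions allowed) and set $G=F_1\cap\cdots\cap F_{r-1}$. We must show $|G|\geq t+1$. Taking $F_r=F_{r-1}$ in the $r$-wise condition already gives $|G|\geq t$. Hence the only case to rule out is $|G|=t$.

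Suppose $|G|=t$. For every $F\in\mathcal{F}$, the $r$-wise $t$-intersecting property applied to $F_1,\ldots,F_{r-1},F$ gives
\[
|G\cap F|=|F_1\cap\cdots\cap F_{r-1}\cap F|\geq t=|G|.
\]
This forces $G\subset F$ for every $F\in\mathcal{F}$, so $G\subset\cap\mathcal{F}$. Therefore $|\cap\mathcal{F}|\geq t$, contradicting the assumption that $\mathcal{F}$ is non-trivial. Consequently $|G|\geq t+1$ for every choice of $F_1,\ldots,F_{r-1}$, which means $\mathcal{F}$ is $(r-1)$-wise $(t+1)$-intersecting.

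When $r=2$, the conclusion should be read as saying that every member of $\mathcal{F}$ has size at least $t+1$; the same argument with $G=F_1$ shows this. There is no real obstacle here. The only points needing care are that repetitions among the $F_i$ are allowed, which is what gives $|G|\geq t$ for free, and that the containment $G\subset F$ follows from comparing $|G\cap F|$ with $|G|$.
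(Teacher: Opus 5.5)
Your proof is correct and complete. The paper states this fact without proof, citing it as an easy known result, and your argument is the standard one: the case $|G|=t$ forces $G\subset F$ for every $F\in\mathcal{F}$, hence $|\cap\mathcal{F}|\geq t$, contradicting non-triviality, while repeated members (which the paper also allows, e.g.\ $|B_1\cap B_2|=|B_1\cap B_2\cap B_2|$) give $|G|\geq t$ for free.
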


Applying Fact \ref{fact-key} repeatedly yields that  if $\hf$ is non-trivial $r$-wise $t$-intersecting then $\hf$ is also non-trivial 2-wise $(t+r-2)$-intersecting. Thus we have the following corollary of Theorems \ref{thm-EKR} and \ref{thm-HMF}.

\begin{corollary}\label{cor-1.4}
\begin{itemize}
    \item[(i)] For $n\geq k> t\geq 1$,
    \begin{align*}
    g(n,k,t+1,1) \leq g(n,k,t,2)\leq \ldots\leq g(n,k,2,t),\\[3pt]
    h(n,k,t+1,1) \leq h(n,k,t,2)\leq \ldots\leq h(n,k,2,t).
    \end{align*}
    \item[(ii)] For $n\geq (t+r-1)(k-t-r+3)$,
\begin{align*}
g(n,k,r,t)= \binom{n-t}{k-t},\
h(n,k,r,t)= \max\left\{|\mathcal{A}(n,k,r+t)|,|\mathcal{B}(n,k,r+t)|\right\}.
\end{align*}
\end{itemize}
\end{corollary}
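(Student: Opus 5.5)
The plan rests on Fact \ref{fact-key}: it lowers $r$ by one and raises the intersection parameter by one. Everything is then reduced to the $2$-wise case handled by Theorems \ref{thm-EKR} and \ref{thm-HMF}. I will write the parameters of a generic entry of the chains as $(r,s)$ with $r+s=t+2$.

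\textbf{Part (i), the $h$-chain.} Let $\hf$ be non-trivial $r$-wise $s$-intersecting with $r\geq 3$. By Fact \ref{fact-key}, $\hf$ is $(r-1)$-wise $(s+1)$-intersecting. It remains non-trivial for the new parameter, since $|\cap\hf|<s<s+1$. Hence $|\hf|\leq h(n,k,r-1,s+1)$, and taking the maximum gives $h(n,k,r,s)\leq h(n,k,r-1,s+1)$. Iterating from $(t+1,1)$ down to $(2,t)$ yields the second chain.

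\textbf{Part (i), the $g$-chain.} The same argument covers every non-trivial member of the $g$-maxima, since $h(n,k,r,s)\leq g(n,k,r-1,s+1)$. The trivial members are exactly the subfamilies of an $s$-star. This is the step I expect to be the real obstacle. The full $s$-star is $r'$-wise $s$-intersecting for every $r'$, but it is not $(s+1)$-intersecting. Already for $t=1$, $g(n,k,3,1)=\binom{n-1}{k-1}$ exceeds $g(n,k,2,2)=\binom{n-2}{k-2}$ for large $n$. So the $g$-inequalities cannot follow from Fact \ref{fact-key} alone. What the argument does establish for the $g$-entries is $g(n,k,r,s)\leq \max\{\binom{n-s}{k-s},\,h(n,k,r-1,s+1)\}$. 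I will therefore not claim more for the $g$-chain than this. The monotone chain itself is proved for $h$, and only the $h$-chain is needed in part (ii).

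\textbf{Part (ii).} Set $t'=t+r-2$. Applying Fact \ref{fact-key} $r-2$ times, any non-trivial $r$-wise $t$-intersecting $\hf$ is non-trivial $2$-wise $t'$-intersecting. This is exactly the $h$-chain of part (i). The hypothesis $n\geq (t+r-1)(k-t-r+3)$ is precisely $n\geq (t'+1)(k-t'+1)$. So Theorem \ref{thm-HMF} gives
\[
|\hf|\leq \max\{|\ha(n,k,t'+2)|,|\hb(n,k,t'+2)|\},
\]
and $t'+2=t+r$. For the matching lower bound, recall from the introduction that $\ha(n,k,t+r)$ and $\hb(n,k,t+r)$ are non-trivial $r$-wise $t$-intersecting. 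This proves the formula for $h(n,k,r,t)$.

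For $g(n,k,r,t)$, the full $t$-star gives $\geq$. For $\leq$, either $|\cap\hf|\geq t$, so $\hf$ lies in a $t$-star and $|\hf|\leq\binom{n-t}{k-t}$. Or $\hf$ is non-trivial, hence $2$-wise $t'$-intersecting with $n\geq (t'+1)(k-t'+1)$. Then Theorem \ref{thm-EKR} gives $|\hf|\leq\binom{n-t'}{k-t'}\leq\binom{n-t}{k-t}$, since $t'\geq t$. Thus in part (ii) the trivial case is harmless, because the target there is the $t$-star bound itself.
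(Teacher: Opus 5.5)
Your proof follows the paper's route. The paper justifies this corollary only by iterating Fact~\ref{fact-key} to see that a non-trivial $r$-wise $t$-intersecting family is non-trivial $2$-wise $(t+r-2)$-intersecting, and then quoting Theorems~\ref{thm-EKR} and~\ref{thm-HMF}. That is exactly what you do for the $h$-chain and for both formulas in (ii); your trivial/non-trivial split for $g$ in (ii) is a step the paper leaves implicit.

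Your refusal to prove the $g$-chain in (i) is also correct. The one-line argument never touches trivial families, and that chain is false as printed. In the corollary's indexing take $t=2$ (your ``$t=1$'' is really the intersection parameter $s$), $k\geq 3$ and $n\geq 3(k-1)$. The full $1$-star then gives $g(n,k,3,1)\geq\binom{n-1}{k-1}>\binom{n-2}{k-2}=g(n,k,2,2)$, the last equality by Theorem~\ref{thm-EKR}.

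So only the $h$-chain holds, together with your weaker bound $g(n,k,r,s)\leq\max\{\binom{n-s}{k-s},h(n,k,r-1,s+1)\}$. This error is harmless for the paper, which only ever invokes part (ii) of the corollary.
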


Combining Corollary \ref{cor-1.4} (ii) and Theorem \ref{thm-1.3}, one can deduce

\begin{theorem}[\cite{BL}]\label{thm-1.5}
For $n\geq \frac{(t+r+1)}{2}(k-t-r+3)$,
\[
h(n,k,r,t)= \max\left\{|\mathcal{A}(n,k,r+t)|,|\mathcal{B}(n,k,r+t)|\right\}.
\]
\end{theorem}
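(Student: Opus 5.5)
Theorem \ref{thm-1.5} follows by combining Fact \ref{fact-key} once with Theorem \ref{thm-1.3} and Corollary \ref{cor-1.4}(ii). The idea is to reduce the non-trivial $r$-wise $t$-intersecting problem to a $2$-wise $(t+r-2)$-intersecting problem, whose extremal families are already known in the whole range $n\geq \frac{t+r+1}{2}(k-t-r+3)$.

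Fix a non-trivial $r$-wise $t$-intersecting family $\hf\subset\binom{[n]}{k}$ and set $t'=t+r-2$. Applying Fact \ref{fact-key} $r-2$ times shows that $\hf$ is 2-wise $t'$-intersecting. We may assume $k>t'$: if $k\le t'$, a 2-wise $t'$-intersecting family is a single set, and a single set is trivially intersecting, which contradicts non-triviality. Note that $\frac{t'+3}{2}(k-t'+1)=\frac{t+r+1}{2}(k-t-r+3)$ and $(t'+1)(k-t'+1)=(t+r-1)(k-t-r+3)$. The argument now splits according to $n$.

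\emph{Case 1:} $n\geq (t+r-1)(k-t-r+3)$. Here Corollary \ref{cor-1.4}(ii) gives exactly the claimed value of $h(n,k,r,t)$.

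\emph{Case 2:} $\frac{t+r+1}{2}(k-t-r+3)\le n\le (t+r-1)(k-t-r+3)$. Theorem \ref{thm-1.3}, applied with $t'$ in place of $t$, gives $|\hf|\le |\ha(n,k,t'+2)|=|\ha(n,k,t+r)|$. This requires $t'\ge 2$, which holds whenever $r\ge 3$ or $t\ge2$. The remaining case $r=2$, $t=1$ is the non-uniform-threshold Hilton--Milner situation, where the lower bound is simply $n\geq 2k$; it needs a direct citation of Theorem \ref{thm-HMF} or Hilton--Milner for $n>2k$, and it is the one point to double-check. The upper bound $|\hf|\le\max\{|\ha(n,k,r+t)|,|\hb(n,k,r+t)|\}$ follows. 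For the lower bound, both $\ha(n,k,r+t)$ and $\hb(n,k,r+t)$ are non-trivial $r$-wise $t$-intersecting, as noted in the introduction, so equality holds.

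No step here is a real obstacle. The only care needed is the bookkeeping of parameters, namely checking that the threshold $\frac{t'+3}{2}(k-t'+1)$ translates into $\frac{t+r+1}{2}(k-t-r+3)$, together with the degenerate cases $k\le t+r-2$ and $t'=1$.
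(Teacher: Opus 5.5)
Your proposal is correct and is the paper's own argument: the paper only remarks that this theorem follows by combining Corollary~\ref{cor-1.4}(ii) in the range $n\ge (t+r-1)(k-t-r+3)$ with Theorem~\ref{thm-1.3} applied with $t+r-2$ in place of $t$, which is legitimate because Fact~\ref{fact-key} makes $\hf$ 2-wise $(t+r-2)$-intersecting. Your worry about $r=2$, $t=1$ is unnecessary: there $\frac{t+r+1}{2}(k-t-r+3)=(t+r-1)(k-t-r+3)=2k$, so your Case 2 is just the single value $n=2k$, which Case 1 already covers.
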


In \cite{FW25}, the following result was proved.

\begin{theorem}[\cite{FW25}]
For $n\geq \left(2.5 t\right)^{\frac{1}{r-1}}(k-t)+k$ and $k> t\geq 1$,
\[
g(n,k,r,t)= \binom{n-t}{k-t}.
\]
\end{theorem}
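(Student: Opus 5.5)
We write $g(n,k,r,t)$ for the quantity defined earlier in the paper: the maximum of $|\hf|$ over a \emph{single} family $\hf\subset\binom{[n]}{k}$ that is $r$-wise $t$-intersecting. The lower bound $g(n,k,r,t)\ge\binom{n-t}{k-t}$ comes from the full $t$-star $\hs_{[t]}\cap\binom{[n]}{k}$. For the upper bound we use the standard shifting operators $S_{ij}$. They preserve both the size and the $r$-wise $t$-intersecting property, so we may assume $\hf$ is shifted. For shifted families we use the well-known structural fact (cf. \cite{F87}): every $F\in\hf$ admits some $\ell\ge 0$ with
\[
|F\cap[t+r\ell]|\ge t+(r-1)\ell .
\]
The proof is by contradiction. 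If some $F$ violates this for every $\ell$, then shifting $r$ copies of $F$ "cyclically" gives $r$ members of $\hf$ whose common intersection is smaller than $t$.

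For each $F$ let $\ell(F)$ be the least such $\ell$, and split $\hf=\hf_0\cup\hf_1\cup\dots$ according to $\ell(F)$. Then $\hf_0\subset\hs_{[t]}$. For $\ell\ge1$, minimality forces $t+r\ell\in F$ and $|F\cap[t+r\ell]|=t+(r-1)\ell$. This gives
\[
|\hf_\ell|\le \binom{t+r\ell-1}{t+(r-1)\ell-1}\binom{n-t-r\ell}{k-t-(r-1)\ell}.
\]
If all $\hf_\ell$ with $\ell\ge1$ are empty we are done. Otherwise let $\ell_0\ge 1$ be the least index with $\hf_{\ell_0}\neq\emptyset$. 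By shiftedness, $\hf$ then contains the "canonical" set $B_0$ consisting of $[t+r\ell_0]$ minus $\ell_0$ elements of $[t]$, suitably completed. We now exploit the $r$-wise condition with $r-1$ shifted copies of $B_0$. Any $G\in\hs_{[t]}$ that avoids too much of $[t+1,t+r\ell_0]$, together with these $r-1$ copies, would have common intersection of size less than $t$. Hence many sets of the star are missing from $\hf_0$. Concretely, one aims to show
\[
|\hs_{[t]}\setminus\hf_0|\ \ge\ \binom{n-t-r\ell_0}{k-t}\quad(\text{up to a constant factor}).
\]

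The argument then reduces to the inequality
\[
\sum_{\ell\ge1}|\hf_\ell|\le|\hs_{[t]}\setminus\hf_0|.
\]
Comparing term by term, the ratio of consecutive binomial terms is essentially $\bigl(\frac{k-t}{n-k}\bigr)^{r-1}$ per unit of $\ell$, multiplied by a combinatorial factor of order $t$. The ratio is governed by $\binom{n-t-r\ell}{k-t-(r-1)\ell}\big/\binom{n-t-r\ell}{k-t}$, and the factor comes from $\binom{t+r\ell-1}{\ell}$. The hypothesis $n-k\ge(2.5t)^{1/(r-1)}(k-t)$ is exactly what makes this per-step ratio at most $1/(2.5t)$ times the combinatorial factor. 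A geometric series then finishes, the constant $2.5$ absorbing the sum over $\ell$ and the loss in the first term.

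The main obstacle is the leading term $\ell=\ell_0$, especially $\ell_0=1$. There one needs a sharp count of the missing star sets, rather than a crude one, so that the estimate closes with constant $2.5$ and not something larger. I would first try $\ell_0=1$ with an explicit injection. Each $F\in\hf_1$ is sent to a star set obtained by exchanging the at most $r$ elements of $F\cap[t+1,t+r]$ for elements outside $F$; this set is forbidden from lying in $\hf$. The multiplicity of this map is bounded by $t\cdot\bigl(\frac{k-t}{n-k}\bigr)^{r-1}$-type quantities. Larger $\ell_0$ are handled by the same injection together with the geometric decay.
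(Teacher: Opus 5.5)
The paper does not prove this statement; it only quotes it from \cite{FW25}. Your outline therefore has to stand on its own, and it has a genuine gap at its central step: the lower bound on $|\hs_{[t]}\setminus\hf_0|$, which you only say one ``aims to show''.

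The mechanism you offer cannot deliver that bound. By shiftedness, $\ell_0=1$ always, since $[k+1]\setminus\{t\}$ precedes every $k$-set missing an element of $[t]$. But nonemptiness of $\hf_1$ guarantees only the sets preceding $[k+1]\setminus\{t\}$, i.e.\ $[k+1]\setminus\{y\}$ with $y\ge t$. Used as the $r-1$ companions of a star set $G\supseteq[t]$, these exclude exactly the $G$ with $|G\cap[t+1,k+1]|\le r-2$. When $n-k$ is of order $(2.5t)^{1/(r-1)}(k-t)$, that is a fraction of the star that is exponentially small in $k-t$. By contrast, the worst-case bound on $|\hf_1|$ it must absorb, of order $t\binom{n-t-r}{k-t-r+1}$, is a fraction of the star bounded away from $0$.

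For $r=2$ your claimed bound is simply false. The family $\{G\supseteq[t]\colon G\cap[t+1,k+1]\neq\emptyset\}\cup\{[k+1]\setminus\{x\}\colon x\in[t]\}$ is shifted and $t$-intersecting, has $\hf_1\neq\emptyset$, and misses only $\binom{n-k-1}{k-t}$ star sets. (The case $r=2$ is anyway Theorem~\ref{thm-EKR}, since $2.5t(k-t)+k\ge (t+1)(k-t+1)$.)

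For $r\ge 3$ a deficit of that order does hold, but for a reason absent from your sketch: it uses the $r$-wise condition with several star members at once. Since $[k+1]\setminus\{t\}\in\hf$, any $G,G'\in\hf_0$ satisfy $G\cap G'\cap[t+1,k+1]\neq\emptyset$. EKR applied to the tails then gives $|\hf_0|\le\binom{n-t-1}{k-t-1}$. Even so, you would still have to verify that $\sum_{\ell\ge1}|\hf_\ell|\le\binom{n-t-1}{k-t}$ for all $t\ge 1$; saying that ``the constant $2.5$ absorbs the sum over $\ell$'' asserts this without checking it.

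The fallback injection in your last paragraph fails for a related reason. The image $G=[t]\cup R\cup Y$ of $F=([t+r]\setminus\{x\})\cup R$ keeps the whole tail $R$ of $F$, so nothing forbids $G$ from lying in $\hf$. Concretely, suppose $k-t\ge r(r-1)$ and take the $\prec$-down-closure of $\{[k+1]\setminus\{t\},\ [t]\cup[t+r+1,k+r]\}$. It consists of $k$-subsets of $[k+r]$, so any $r$ of its members share at least $k+r-r^2\ge t$ elements. It is therefore shifted and $r$-wise $t$-intersecting. Yet it contains both $F=[k+1]\setminus\{t\}\in\hf_1$ and its image $G=[t]\cup[t+r+1,k+r]$ under your map.

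What is missing is a genuine trade-off: the star deficit must be tied to the actual size of the non-star part, not to its mere nonemptiness. Compare how this paper handles $r=3$. Lemmas~\ref{lem-3.1} and \ref{lem-3.2} give cross-intersection conditions between the pieces of $\hf$. Proposition~\ref{prop-F78} then turns ``one piece is large'' into ``it contains a specific spread set'', and that set caps all the other pieces.
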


Let us mention another recent result.

\begin{theorem}[\cite{FW23}]
    Let $0<\varepsilon <\frac{1}{10}$. For $n\geq \frac{4}{\varepsilon^2}+7$ and $(\frac{1}{2}+\varepsilon)n\leq k\leq \frac{3}{5}n-3$,
    \[
    h(n,k,3,1)= |\mathcal{A}(n,k,4)|.
    \]
\end{theorem}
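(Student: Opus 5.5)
The plan is to reduce to shifted families, split $\hf$ according to its trace on $[4]$, and show that every set meeting $[4]$ in at most two elements forces more sets of $\ha(n,k,4)$ to be missing from $\hf$. Let $\hf\subset\binom{[n]}{k}$ be non-trivial 3-wise intersecting and of maximum size.

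First I would make $\hf$ shifted. I would apply the shifts $S_{ij}$ ($i<j$) one at a time. Whenever a shift would make the family trivial, I would stop and handle that configuration directly. In that case all members contain a common element $x$ except for members of a very restricted shape. Such families can be compared with $\ha(n,k,4)$ or $\hb(n,k,4)$ by a direct count. In the range $k\geq(\frac12+\varepsilon)n$, the count $|\hb(n,k,4)|$ is far smaller than $|\ha(n,k,4)|$. So I may assume $\hf$ is shifted and still non-trivial, which gives $[2,k+1]\in\hf$.

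Next I would use the standard consequence of shiftedness for 3-wise intersecting families. Every $F\in\hf$ satisfies $|F\cap[3i+1]|\geq 2i+1$ for some $i\geq0$; equivalently, the lattice walk of $F$ hits the line $y=2x+1$. By Fact \ref{fact-key}, $\hf$ is also 2-wise 2-intersecting, which excludes further walks. I would then split $\hf$ by trace:
\[
\hf(T)=\{F\setminus[4]\colon F\in\hf,\ F\cap[4]=T\},\qquad T\subset[4].
\]
The traces with $|T|\geq3$ contribute at most $|\ha(n,k,4)|$. The task is to show that the families $\hf(T)$ with $|T|\leq2$ are paid for by what is missing from $\hf(T')$ for $|T'|=3$. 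If $T_1,T_2,T_3$ satisfy $T_1\cap T_2\cap T_3=\emptyset$, then $\hf(T_1),\hf(T_2),\hf(T_3)$ are 3-wise cross-intersecting on $[5,n]$. For example, with $T=\{1,2\}$, the families $\hf(\{1,2\}),\hf(\{1,3,4\}),\hf(\{2,3,4\})$ are 3-wise cross-intersecting, and similarly for other choices. Using shiftedness, I would reduce to $T=\{1,2\}$ as the dominant bad trace, together with the traces of size $\leq1$.

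The key inequality is a 3-wise cross-intersecting bound of the form
\[
|\ha|+|\hb|+|\hc|\leq 2\binom{m}{k-3}
\]
or a weighted variant, with $m=n-4$. Here $\ha\subset\binom{[m]}{k-2}$ and $\hb,\hc\subset\binom{[m]}{k-3}$. I would prove it via the Kruskal--Katona theorem in Lovász form, using the complement families $\{[m]\setminus B\}$. The triple condition means no triple of complements covers $[m]$. Together with $k\leq\frac35n-3$, this bound should imply that adding sets with trace $\{1,2\}$ costs at least as many sets with trace of size 3.

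I expect the main obstacle to be this cross-intersecting bound in exactly the range $\frac12<k/n\leq\frac35$. The upper limit $\frac35$ is where $\ha(n,k,4)$ stops being optimal against families like $\{F\colon |F\cap[5]|\geq4\}$, so the estimate must be nearly tight there. I would first try a measure or random-walk argument with $p=k/n$. It compares the probability that a $p$-biased walk hits $y=2x+1$ only after step 4 against $4p^3(1-p)$. The $\varepsilon$-dependence, together with $n\geq 4/\varepsilon^2+7$, would absorb the error between the uniform and the $p$-biased measures.
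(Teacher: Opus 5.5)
There is no proof in the paper to compare with, because the statement is only quoted from \cite{FW23}. More importantly, as written it is false throughout its range, so your plan cannot be completed. Consider
\[
\mathcal{G}=\left\{F\in\binom{[n]}{k}\colon 1\in F,\ |F\cap[2,k+1]|>\frac{k}{2}\right\}\cup\{[2,k+1]\}.
\]
Any two members containing $1$ meet inside the $k$-set $[2,k+1]$, and each of them meets $[2,k+1]$, so $\mathcal{G}$ is 3-wise intersecting. It is non-trivial: $[2,k+1]$ misses $1$, and $[1,k+1]\setminus\{x\}$ and $[1,k]$ show that no $x\neq 1$ is common to all members. It is even shifted.

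To compare sizes, let $F\setminus\{1\}$ be uniform in $\binom{[2,n]}{k-1}$. Then $|F\cap[2,k+1]|$ is hypergeometric with mean $\frac{k(k-1)}{n-1}=\frac{k}{2}+\frac{k(2k-n-1)}{2(n-1)}$. Chebyshev's inequality bounds the proportion of excluded sets by $\frac{4(n-k)^2}{(n-2)(2k-n-1)^2}$. Since $2k-n\geq 2\varepsilon n$ and $\varepsilon^2 n\geq 4$, this is below $0.07$, so $|\mathcal{G}|>0.93\binom{n-1}{k-1}$. On the other hand,
\[
|\ha(n,k,4)|=\frac{(k-1)(k-2)(4n-3k-3)}{(n-1)(n-2)(n-3)}\binom{n-1}{k-1}<0.91\binom{n-1}{k-1}.
\]
Here, for $\frac n2<k\leq\frac35 n-3$ and $n>407$, the first two factors are below $0.6$ and the third is below $2.52$.

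Hence $h(n,k,3,1)>|\ha(n,k,4)|$; asymptotically you are comparing $p=k/n$ with $4p^3(1-p)+p^4$. The hypothesis $n\geq 4/\varepsilon^2+7$ is exactly what makes this majority construction large once $k>\frac n2$. Presumably the intended range is $\frac25 n+3\leq k\leq(\frac12-\varepsilon)n$; the stated range is its image under $k\mapsto n-k$, and that version would continue Theorem \ref{thm-1.5}.

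Your outline therefore has to break somewhere, and $\mathcal{G}$ shows where. It is not the reduction to shifted families, since $\mathcal{G}$ is already shifted (although Corollary \ref{cor-2.1} needs $n\geq 2k-1$ and is unavailable here). It is the bookkeeping. With $m=n-4$, the traces of $\mathcal{G}$ on $[4]$ are:
$\mathcal{G}(\{2,3,4\})=\{[5,k+1]\}$; $\mathcal{G}(T)=\emptyset$ for every other $T\not\ni 1$; and for each $T\ni 1$, $\mathcal{G}(T)$ contains all but a small proportion of $\binom{[5,n]}{k-|T|}$.

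So the only substantial deficit among the 3-traces is at most $\binom{m}{k-3}$. Meanwhile the traces $\{1\}$ and $\{1,a\}$ contribute nearly $\binom{m}{k-1}+3\binom{m}{k-2}$. Already $3\binom{m}{k-2}=\frac{3(n-k-1)}{k-2}\binom{m}{k-3}>2\binom{m}{k-3}$, because $5k<3n+1$. The dominant bad trace is $\{1\}$, not $\{1,2\}$. Your claim that sets with $|F\cap[4]|\leq 2$ are paid for by sets missing from 3-traces is simply false here.

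Yet your triple inequality holds for $\mathcal{G}$: $|\mathcal{G}(\{1,2\})|+|\mathcal{G}(\{1,3,4\})|+|\mathcal{G}(\{2,3,4\})|\leq\binom{m}{k-2}+\binom{m}{k-3}+1<2\binom{m}{k-3}$. So no estimate on individual triples, weighted or not, can close the argument.

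Finally, your explanation of the bound $\frac35$ is wrong. We have $|\ha(n,k,4)|-|\{F\in\binom{[n]}{k}\colon |F\cap[5]|\geq 4\}|=4\binom{n-5}{k-3}>0$ for all $n,k$, so that family never competes.
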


In the present paper, we mainly consider $g(n,k,3,t)$ and $h(n,k,3,t)$. By Fact \ref{fact-key}, if $\hf\subset \binom{[n]}{k}$ is non-trivial 3-wise $t$-intersecting, then $\hf$ is 2-wise $(t+1)$-intersecting. If $k=t+1$ then $|\hf|\leq 1$ follows. For $k=t+2$, suppose that $[t+2], [t+1]\cup\{t+3\}\in \hf$, then the non-triviality implies $F\subset [t+3]$ for any $F\in \hf$. Thus we may always assume $k\geq t+3$ in the rest of the paper.
 
Our first result is the following. 

\begin{theorem}\label{thm-main}
For $n\geq  \sqrt{\frac{t+3}{1.654}}(k-t)+k$ and $k>t\geq 46$,
\[
g(n,k,3,t) = \max\left\{\binom{n-t}{k-t},|\ha(n,k,t+3)|\right\}.
\]
\end{theorem}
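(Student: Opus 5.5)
We split according to whether $\hf$ is trivial. Let $\hf\subset\binom{[n]}{k}$ be 3-wise $t$-intersecting of maximum size. If $|\cap\hf|\ge t$, then $\hf$ is contained in a full $t$-star and $|\hf|\le\binom{n-t}{k-t}$. Otherwise $\hf$ is non-trivial. By Fact \ref{fact-key}, it is then 2-wise $(t+1)$-intersecting, and we must show $|\hf|\le\max\{\binom{n-t}{k-t},|\ha(n,k,t+3)|\}$. As usual we first apply shifting: the operations $S_{ij}$ preserve the 3-wise $t$-intersecting property and $|\hf|$. If a shift would make the family trivial, the result is contained in a $t$-star and we are done. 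So we may assume $\hf$ is shifted and still non-trivial, or else stop shifting at the last non-trivial stage in the standard way. For shifted 3-wise $t$-intersecting families we use the key structural fact: for every $F_1,F_2,F_3\in\hf$ there is some $\ell$ with $|F_1\cap F_2\cap F_3\cap[\ell]|\ge t$ and \ldots; in particular $|F\cap[t+3j]|\ge t+2j$ for some $j\ge 0$ (the 3-wise analogue of the Frankl walk criterion).

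Next we decompose $\hf$ by its trace on a small initial segment. Put $\hf(T)=\{F\in\hf: F\cap[t+3]=T\}$. Sets with $|F\cap[t+3]|\ge t+2$ contribute at most $|\ha(n,k,t+3)|$, since non-triviality plus shiftedness forces $\hf$ into $\ha$-type structure on this part. It then remains to bound the families $\hf(T)$ with $|T|\le t+1$, whose members must satisfy $|F\cap[t+3j]|\ge t+2j$ for some $j\ge2$. We count these by the random-walk/lattice-path method. The number of $k$-sets whose walk first reaches the line $y=x+t$-type boundary at step $t+3j$ is at most $\binom{t+3j}{j}\binom{n-t-3j}{k-t-2j}$. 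Comparing with $\binom{n-t}{k-t}$, each term carries a factor of roughly $\binom{t+3j}{j}\big(\frac{k-t}{n-t}\big)^{2j}$. Under $n-k\ge\sqrt{(t+3)/1.654}\,(k-t)$ we have $\left(\frac{k-t}{n-k}\right)^2\le\frac{1.654}{t+3}$, so the $j$-th term is about $\frac{(t+3j)^j}{j!}\left(\frac{1.654}{t+3}\right)^j$. This sum is geometric-like and controlled for $t\ge46$.

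More precisely, we compare $\hf$ against both extremal candidates. If $\hf$ contains a set $F$ with $|F\cap[t+3]|\le t+1$ (a "deep" set), then by 3-wise intersection with two sets from the star-like part, the sets of $\hf$ meeting $[t+3]$ in $t+2$ elements are heavily restricted. The loss relative to $|\ha(n,k,t+3)|$ or to $\binom{n-t}{k-t}$ is then at least a fixed fraction, of order $\binom{n-t-3}{k-t-2}$, which dominates the gain $\sum_{j\ge2}$. The case distinction should follow whether $n$ lies above or below $(t+2)(k-t)$-ish, where $\binom{n-t}{k-t}$ versus $|\ha(n,k,t+3)|$ switches.

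The main obstacle is the quantitative inequality. We must show that the total count of deep sets, weighted by the lattice-path bound, stays below the deficit forced on the $(t+2)$-level part, uniformly for all $n$ down to $\sqrt{(t+3)/1.654}\,(k-t)+k$. The constant $1.654$ and the threshold $t\ge46$ presumably come from optimizing exactly this comparison. We would first try bounding the ratio of consecutive terms $j\to j+1$ by $\frac{(t+3j+3)^3}{(j+1)(2j+2)(2j+3)}\cdot\frac{1.654}{t+3}\cdot\frac{1}{\ldots}$ and checking that it is $<1/2$ for $t\ge46$. Failing that, we would handle small $j$ explicitly and use the crude bound for large $j$.
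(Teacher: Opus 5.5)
Your reduction to a shifted non-trivial family is fine: for $g$ you may shift freely, since a trivial end result lies in a $t$-star. The counting that follows, however, has a genuine gap, and the ``quantitative inequality'' you defer is in fact the whole theorem.

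First, it is false that sets with $|F\cap[t+3]|\le t+1$ can satisfy the walk criterion only at some $j\ge 2$. Every $F\supseteq[t]$ satisfies it at $j=0$. So your $\sum_{j\ge 2}$ omits $\thf_3([t])$ and $\thf_2([t]\cup\{x\})$, $x\in\{t+1,t+2,t+3\}$. A priori these hold $\binom{n-t-3}{k-t}+3\binom{n-t-3}{k-t-1}$ sets, i.e.\ almost the whole $t$-star, and they are exactly the pieces the proof is about.

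Second, even for the genuinely deep sets ($[t]\not\subset F$) the count is not small. By your own estimate the $j$-th term is about $1.654^j/j!$ times $\binom{n-t}{k-t}$ for $j\ll t$. So already $j=2$ is of the same order as $\binom{n-t}{k-t}$, hence as $|\ha(n,k,t+3)|\approx(t+3)\binom{n-t-3}{k-t-2}$ near the threshold. The deficit you propose is only of order $\binom{n-t-3}{k-t-2}$, smaller by a factor of order $t$, so it cannot dominate. (Also, $\binom{n-t}{k-t}$ and $|\ha(n,k,t+3)|$ cross at $n_0(k,t)\approx\frac{\sqrt{4t+9}-1}{2}k$, not near $(t+2)(k-t)$.)

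The missing structural input is as follows. Non-triviality makes $\hf$ 2-wise $(t+1)$-intersecting (Fact~\ref{fact-key}). Lemma~\ref{lem-2.7} then makes every piece $\thf_i(T)$ strongly self- and cross-intersecting (Lemmas~\ref{lem-3.1} and~\ref{lem-3.2}). For example, $\thf_3([t])$ is 2-wise $4$-intersecting, so $|\thf_3([t])|\le\binom{n-t-3}{k-t-4}$ instead of $\binom{n-t-3}{k-t}$; this is where a non-trivial family loses against the star.

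The paper then argues as follows.
\begin{enumerate}
\item The exactly 2-wise $(t+1)$-intersecting case is handled by Lemma~\ref{lem-4.5}, via Li--Zhang and a bipartite Kneser-graph argument.
\item Otherwise, Lemma~\ref{lem-key1} shows that beating both bounds forces $|\thf_2([t+1])|>\binom{n-t-3}{k-t-3}$. Its proof splits into the exactly 2-wise $(t+2)$- and 2-wise $(t+3)$-intersecting cases and bounds $\hf_1$ through Hilton's theorem or Li--Zhang. The estimate $\binom{t+3}{2}\binom{n-t-3}{k-t-4}\le\frac{\theta}{2}(t+2)\binom{n-t-3}{k-t-2}$, $\theta=1.654$, is where the constant and the bound on $t$enter.
\item Proposition~\ref{prop-F78} then puts $(t+4,t+6,\ldots,2k-t)$ into $\thf_2([t+1])$.
\item The cross-intersection relations of Lemma~\ref{lem-3.2} cut down every other piece:
\begin{itemize}
\item $\thf_1([t+3]\setminus\{j\})$, $j\le t+1$, to $\binom{n-t-3}{k-t-3}$;
\item $\thf_3([t])$ to $\binom{n-t-3}{k-t-5}$;
\item $\thf_2([t]\cup\{x\})$, $x\in\{t+2,t+3\}$, to $\binom{n-t-3}{k-t-4}$;
\item $\thf_i(T)$, $i\ge 4$, to $\binom{n-t-3}{k-t-3i}$.
\end{itemize}
Summing gives $|\hf|<\binom{n-t}{k-t}$.
\end{enumerate}

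Your observation that a deep set restricts the $(t+2)$-level is essentially Fact~\ref{fact-key1}. By itself it controls neither the $[t]$-containing part nor the deep part.
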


Define
\[
n_0(k,t) := \frac{1}{2}\left(\sqrt{(4t+9)k^2-2(4t^2+11t+3)k+4t^3+13t^2+6t+1}-k+3(t+1)\right).
\]
Direct calculation shows that
\begin{align}\label{ineq-1.0}
|\ha(n,k,t+3)| \gtreqqless \binom{n-t}{k-t} \mbox{ is equivalent to } n  \lesseqqgtr n_0(k,t).
\end{align}

It can be checked by Wolfram Mathematica that for $k\geq 667$ and $k\geq t+3\geq 49$,
\[
n_0(k,t) \geq  \sqrt{\frac{t+3}{1.654}}(k-t)+k.
\]
By \eqref{ineq-1.0} and Theorem \ref{thm-main}, we obtain the Exact Erd\H{o}s-Ko-Rado Theorem for 3-wise $t$-intersecting families.

\begin{theorem}\label{thm-main0}
Let $k\geq \max\{t,667\}$ and $t\geq 46$. Then
\[
g(n,k,3,t) = \binom{n-t}{k-t} \mbox{ if and only if } n\geq n_0(k,t).
\]
\end{theorem}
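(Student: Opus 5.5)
Theorem \ref{thm-main0} is a corollary of Theorem \ref{thm-main} together with the comparison \eqref{ineq-1.0}, so I would argue both directions separately.

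\emph{The ``if'' direction.} Suppose $n\geq n_0(k,t)$. By \eqref{ineq-1.0} this gives $|\ha(n,k,t+3)|\leq \binom{n-t}{k-t}$. I then need $n$ to lie in the range where Theorem \ref{thm-main} applies. For $k\geq 667$ and $k\geq t+3\geq 49$, the Mathematica check stated above gives
\[
n_0(k,t)\geq \sqrt{\tfrac{t+3}{1.654}}(k-t)+k,
\]
so $n\geq n_0(k,t)$ puts us in that range. The hypothesis $k\geq \max\{t,667\}$ also allows $k\in\{t,t+1,t+2\}$, which needs a separate remark.
\begin{itemize}
\item If $k=t$, then $\binom{n-t}{0}=1$, and every $t$-uniform 3-wise $t$-intersecting family consists of a single set, so $g=1=\binom{n-t}{k-t}$ for every $n$.
\item If $k=t+1$ or $k=t+2$, I would not use the Mathematica inequality, which is only claimed for $k\geq t+3$. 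Instead I would note that $g(n,k,3,t)\leq g(n,k,2,t)=\binom{n-t}{k-t}$ by Corollary \ref{cor-1.4}(i) and Theorem \ref{thm-EKR}, provided $n\geq (t+1)(k-t+1)$, which is at most $3(t+1)$. Then I would check that $n_0(k,t)\geq 3(t+1)$ in these cases. If that fails, I would verify directly from the formula for $n_0$ that the needed range is covered.
\end{itemize}
Theorem \ref{thm-main} then yields $g(n,k,3,t)=\max\{\binom{n-t}{k-t},|\ha(n,k,t+3)|\}=\binom{n-t}{k-t}$.

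\emph{The ``only if'' direction.} Suppose $n<n_0(k,t)$. By \eqref{ineq-1.0}, $|\ha(n,k,t+3)|>\binom{n-t}{k-t}$. The family $\ha(n,k,t+3)$ is 3-wise $t$-intersecting, so
\[
g(n,k,3,t)\geq |\ha(n,k,t+3)|>\binom{n-t}{k-t}.
\]
This direction needs no range condition. One caveat: the construction needs $k\geq t+2$. When $k$ is too close to $t$ to build it, I would check that $n_0$ is below the trivial threshold $n\geq k$, so the statement is vacuous there.

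The only real obstacle is the edge cases $k\leq t+2$ and the reliance on the numerical inequality between $n_0(k,t)$ and the bound in Theorem \ref{thm-main}. If I wanted more than the computer check, I would expand the square root. The leading term of $n_0$ is $\frac{\sqrt{4t+9}-1}{2}k+O(t)$, while the threshold in Theorem \ref{thm-main} is $(1+\sqrt{(t+3)/1.654})k-O(t^{1.5})$. Since $\frac{\sqrt{4t+9}-1}{2}$ exceeds $1+\sqrt{(t+3)/1.654}$ by a margin that grows with $t$, a careful lower-order estimate should give the inequality once $k\geq 667$.
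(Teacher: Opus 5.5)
Your main line is the paper's argument. For $k\ge t+3$, the computer-checked inequality $n_0(k,t)\ge\sqrt{(t+3)/1.654}\,(k-t)+k$ puts every $n\ge n_0(k,t)$ in the range of Theorem~\ref{thm-main}, \eqref{ineq-1.0} removes the $|\ha(n,k,t+3)|$ term, and $\ha(n,k,t+3)$ witnesses the converse. The problems are in the cases $k\in\{t,t+1,t+2\}$, which you added and which the paper tacitly excludes by assuming $k\ge t+3$. Your treatment of these cases contains steps that fail.

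Write $d=k-t$. The radicand in $n_0$ simplifies to $4d(d-1)t+(3d-1)^2$, so $n_0(t,t)=n_0(t+1,t)=t+2$ and $n_0(t+2,t)=t+\frac{1}{2}\bigl(1+\sqrt{8t+25}\bigr)$.

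\emph{The cases $k=t+1,t+2$.} Here $n_0$ lies far below $(t+1)(k-t+1)$, so Theorem~\ref{thm-EKR} does not reach down to $n_0$. No check of the formula can repair this, because in that gap the 2-wise maximum genuinely exceeds $\binom{n-t}{k-t}$. For example, when $k=t+1$, the family $\binom{[t+2]}{t+1}$ is 2-wise $t$-intersecting and has $t+2>n-t$ members whenever $n\le 2t+1$. The missing idea is to use the 3-wise condition through Fact~\ref{fact-key}: a non-trivial family is 2-wise $(t+1)$-intersecting.
\begin{itemize}
\item For $k=t+1$, a 2-wise $(t+1)$-intersecting family has at most one member, hence is trivial. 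So $g(n,t+1,3,t)=n-t$ for every $n>k$, while $n_0=k+1$, and both directions hold.
\item For $k=t+2$, a non-trivial family lies in $\binom{Y}{t+2}$ for some $(t+3)$-set $Y$; this is the remark preceding Theorem~\ref{thm-main}. Hence $g(n,t+2,3,t)=\max\{\binom{n-t}{2},t+3\}$, and $\binom{n-t}{2}\ge t+3$ holds exactly when $n\ge n_0(t+2,t)$.
\end{itemize}

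\emph{The case $k=t$.} No argument can work here. At $n=t+1$ we have $g=1=\binom{n-t}{0}$, yet $n<n_0(t,t)=t+2$. So the ``only if'' direction is false, and your planned check that $n_0$ lies below the trivial range fails. The hypothesis has to be read as $k>t$, as in Theorem~\ref{thm-main}.

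\emph{Your optional hand-proof sketch.} It misplaces the difficulty. The lower-order term of $n_0$ is $-\Theta(t^{3/2})$, not $O(t)$. At $t=46$ the two leading coefficients differ by only about $0.003$. The binding case is $k$ close to $t$: at $k=t+3$ the required inequality becomes $\sqrt{6t+16}\ge 3+3\sqrt{(t+3)/1.654}$, which holds only for $t\ge 664$. That is where the constant $667$ comes from.
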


Note that $n_0(k,t)$ is very close to $\frac{\sqrt{4t+9}-1}{2}k$ when $k$ is sufficiently large with respect to $t$. Moreover, $\sqrt{\frac{t+3}{1.654}}(k-t)+k \leq \frac{\sqrt{4t+9}-1}{2}k$ for $k> t\geq 46$.
Thus Theorem \ref{thm-main} has the following corollary, which confirms Conjecture 7.2 in \cite{FW25} for $t\geq 46$.

\begin{corollary}
Let $n\geq \frac{\sqrt{4t+9}-1}{2}k$ and $k> t\geq 46$. Then
\[
g(n,k,3,t) = \binom{n-t}{k-t}.
\]
\end{corollary}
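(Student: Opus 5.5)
The plan is to derive the corollary from Theorem \ref{thm-main} together with the comparison \eqref{ineq-1.0}. I will not use the Mathematica check, which needs $k\geq 667$. Put
\[
x=\frac{\sqrt{4t+9}-1}{2},\qquad s=\sqrt{4t+9}.
\]
Then $2x+1=s$ and $x^2+x=t+2$. The paper notes that $\sqrt{\frac{t+3}{1.654}}(k-t)+k\leq xk$ for $k>t\geq 46$. So the hypothesis $n\geq xk$ lets us apply Theorem \ref{thm-main}, which gives
\[
g(n,k,3,t)=\max\left\{\binom{n-t}{k-t},|\ha(n,k,t+3)|\right\}.
\]
By \eqref{ineq-1.0}, it then suffices to show $n_0(k,t)\leq xk$, since then $n\geq n_0(k,t)$ and the maximum is $\binom{n-t}{k-t}$.

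\textbf{Reducing $n_0(k,t)\leq xk$ to a linear inequality.} Write $D=(4t+9)k^2-2(4t^2+11t+3)k+4t^3+13t^2+6t+1$ for the radicand in $n_0$. The claim $n_0(k,t)\leq xk$ is equivalent to
\[
\sqrt{D}\leq sk-3(t+1).
\]
The right-hand side is positive, because $s>3$ and $k>t$. Squaring and cancelling the common term $(4t+9)k^2=s^2k^2$, the inequality becomes
\[
-2(4t^2+11t+3)k+4t^3+13t^2+6t+1\leq -6(t+1)sk+9(t+1)^2 .
\]
Rearranging, this is
\[
k\bigl(8t^2+22t+6-6(t+1)s\bigr)\geq 4t^3+4t^2-12t-8 .
\]

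\textbf{Verifying the linear inequality.} First, the coefficient $8t^2+22t+6-6(t+1)s$ is positive for $t\geq 46$, since $6(t+1)s=O(t^{3/2})$. Hence the left-hand side is increasing in $k$, and it suffices to check $k=t+1$. Expanding, the required inequality at $k=t+1$ becomes
\[
4t^3+26t^2+40t+14\geq 6(t+1)^2 s .
\]
The left-hand side factors as $(t+1)(4t^2+22t+14)$, so this reduces to
\[
4t^2+22t+14\geq 6(t+1)\sqrt{4t+9}.
\]
At $t=46$ this reads $9490\geq 282\sqrt{193}\approx 3918$. The left side grows quadratically and the right side only like $t^{3/2}$, so one short monotonicity check finishes it. For example, square both sides, or note that the difference has positive derivative for $t\geq 46$.

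\textbf{Main obstacle.} The only delicate points are bookkeeping. One must keep the sign of the squared quantity correct when squaring. One must also rely on \eqref{ineq-1.0} exactly as stated, so that $n\geq n_0(k,t)$ gives $|\ha(n,k,t+3)|\leq\binom{n-t}{k-t}$, including the boundary case. If that equivalence were only claimed for large $k$, I would instead compare the two quantities directly. Putting $m=n-t$ and $j=k-t$, the needed inequality is
\[
m(m-1)(m-2)\geq j(j-1)\bigl[(t+3)(m-j)+j-2\bigr],
\]
which I would check at $m=xk-t$ by the same kind of elementary estimate.
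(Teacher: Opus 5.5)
Your argument is correct and follows the paper's route: Theorem \ref{thm-main} applies via $\sqrt{\frac{t+3}{1.654}}(k-t)+k\le \frac{\sqrt{4t+9}-1}{2}k$, and \eqref{ineq-1.0} then kills the $|\ha(n,k,t+3)|$ term. You also do something the paper does not: you explicitly verify $n_0(k,t)\le \frac{\sqrt{4t+9}-1}{2}k$ for all $k>t\ge 46$, where the paper only remarks that $n_0$ is ``very close'' to this bound. One harmless slip: $(t+1)(4t^2+22t+14)=4t^3+26t^2+36t+14$, not $4t^3+26t^2+40t+14$; since the true left-hand side exceeds this by $4t>0$, your reduced inequality $4t^2+22t+14\ge 6(t+1)\sqrt{4t+9}$ is still sufficient.
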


For non-trivial 3-wise $t$-intersecting families, we improve Theorem \ref{thm-1.5} as follows.

\begin{theorem}\label{thm-main2}
  For $n\geq \frac{\sqrt{4t+9}-1}{2}k$ and $k>t\geq 55$,
\[
h(n,k,3,t)= \max\left\{|\mathcal{A}(n,k,t+3)|,|\mathcal{B}(n,k,t+3)|\right\}.
\]
\end{theorem}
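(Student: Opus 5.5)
Let $\hf\subset\binom{[n]}{k}$ be non-trivial 3-wise $t$-intersecting with $n\geq \frac{\sqrt{4t+9}-1}{2}k$ and $k>t\geq 55$. We may assume $k\geq t+3$. By Fact \ref{fact-key}, $\hf$ is 2-wise $(t+1)$-intersecting. We split according to whether $\hf$ is non-trivial as a 2-wise $(t+1)$-intersecting family, i.e.\ whether $|\cap\hf|$ matters at level $t+1$. Since $\hf$ is non-trivial 3-wise $t$-intersecting, $|\cap \hf|\le t-1<t+1$, so $\hf$ is automatically non-trivial 2-wise $(t+1)$-intersecting. Applying Fact \ref{fact-key} twice only yields a 2-wise $(t+1)$ statement (not $(t+2)$), so Theorem \ref{thm-HMF} needs $n\geq (t+2)(k-t)$. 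That is far above our range of roughly $\sqrt{t}\,k$, and the 2-wise results alone will not suffice. The plan is therefore a shifting/structural argument exploiting the 3-wise condition directly, modelled on the proof of Theorem \ref{thm-main}.

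\textbf{Step 1 (shifting).} Apply $(i,j)$-shifts. Shifting preserves the 3-wise $t$-intersecting property but may destroy non-triviality. We follow the standard device: shift only while the family stays non-trivial. If a shift $S_{ij}$ would make it trivial, then every member contains $T\setminus\{j\}\cup\{i\}$ or similar, which forces $\hf$ to lie in a structure close to $\ha(n,k,t+3)$ or $\hb(n,k,t+3)$; these cases are handled by direct counting. Otherwise we reach a shifted non-trivial family, and the standard lemma applies: for shifted 3-wise $t$-intersecting families, each $F\in\hf$ satisfies $|F\cap[t+3\ell]|\geq t+2\ell$ for some $\ell\geq 0$ (the 3-wise analogue of the Frankl walk characterization).

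\textbf{Step 2 (decomposition).} Write $\hf=\hf_0\cup\hf_1\cup\cdots$, where $\hf_\ell$ consists of the sets whose first "hitting" index is $\ell$. Then $|\hf_\ell|\le\binom{t+3\ell}{t+2\ell}\binom{n-t-3\ell}{k-t-2\ell}$ roughly, with a refinement for $\ell=0$, namely $\hf_0\subset\{F:[t]\subset F\}$. Non-triviality forces some set to miss an element of $[t]$; that is, after shifting, $[t+3]\setminus\{t\}$ together with more sets lie in $\hf$. Using the 3-wise condition against this witness, the part $\hf_0$ is restricted to sets meeting $[t+1,t+3]$ in a controlled way, giving at most the $\hb$- or $\ha$-type count. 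For $\ell\geq 1$, the ratio $|\hf_{\ell+1}|/|\hf_\ell|$ is bounded by about $\frac{(t+3\ell)^3}{\ell^2}\cdot\frac{k^2}{(n-k)^2}\cdot$const. With $n-k\geq c\sqrt{t}\,k$, this ratio is $<1$ decaying geometrically once $\ell\ge1$, so $\sum_{\ell\ge2}|\hf_\ell|$ is a small fraction of the main term.

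\textbf{Step 3 (comparison).} Compare the total with $\max\{|\ha(n,k,t+3)|,|\hb(n,k,t+3)|\}$. Here $\ha$ corresponds to the sets with $|F\cap[t+3]|\ge t+2$, and $\hb$ to $[t+1]\subset F$ plus a transversal condition. We show that if $\hf$ contains a set with $|F\cap[t+3]|\le t+1$ beyond these configurations, then the loss in $\hf_0\cup\hf_1$ forced by the 3-wise condition, which is of order $\binom{n-t-3}{k-t-2}$, exceeds the gain from $\hf_{\ge2}$. This is where $t\ge55$ enters, through numerical inequalities.

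\textbf{Main obstacle.} The main obstacle is Step 3's tight inequality near $n\approx\frac{\sqrt{4t+9}-1}{2}k$. There $|\ha|$ and the $\hb$ type are comparable to $\binom{n-t}{k-t}$, so the error terms from $\ell=2$ must be bounded sharply rather than crudely. I would first try to bound $|\hf_2|$ by a cross-intersection argument (Hilton-type) between $\hf_1$ and $\hf_2$ restricted to $[t+7,n]$, rather than by the trivial product estimate.
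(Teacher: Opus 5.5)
\textbf{There is a genuine gap in Steps 2--3: the layer estimates you rely on are false in the range of the theorem.}

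Your ratio $\frac{(t+3\ell)^3}{\ell^2}\cdot\frac{k^2}{(n-k)^2}$ is of order $t^2/\ell^2$ when $n-k\approx\sqrt{t}\,k$, so it is not below $1$. Even the correct computation does not make the higher layers negligible. The walk bound $\binom{t+3\ell}{\ell}\binom{n-t-3\ell}{k-t-2\ell}$ for the $\ell$-th layer is about $\frac{\theta^\ell}{\ell!}\binom{n-t}{k-t}$ with $\theta=t\bigl(\frac{k-t}{n-t}\bigr)^2$. For $k$ large compared with $t$ and $n\approx\frac{\sqrt{4t+9}-1}{2}k$ one has $\theta\approx 1$, which is exactly where $|\ha(n,k,t+3)|\approx\binom{n-t}{k-t}$. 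So the $\ell=2$ layer alone is controlled only by roughly $\frac12\binom{n-t}{k-t}$, a constant fraction of the target. A ``loss of order $\binom{n-t-3}{k-t-2}$'' in $\hf_0\cup\hf_1$ cannot pay for that, and bounding the layers separately cannot give an exact result.

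Your plan also has no mechanism that produces $\hb(n,k,t+3)$ as the extremal family for larger $n$. The claim that the witness $[k+1]\setminus\{t\}$ confines $\hf_0$ to sets meeting $[t+1,t+3]$ ``in a controlled way'' is not right. The witness only forces members of $\hf_0$ to meet each other inside $[t+1,k+1]$, and $\hb(n,k,t+3)$ itself contains sets missing both $t+2$ and $t+3$.

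\textbf{The missing idea is that the pieces are coupled, and the argument is a dichotomy on the single piece $\thf_2([t+1])$} (traces of members with $F\cap[t+3]=[t+1]$). The paper partitions by $|F\cap[t+3]|$ and splits into three cases: exactly 2-wise $(t+1)$-intersecting, exactly 2-wise $(t+2)$-intersecting, or 2-wise $(t+3)$-intersecting. This either empties $\hf_i$ for $i\ge4$ or strengthens all the intersection properties in Lemma \ref{lem-3.1}. With this, Lemma \ref{lem-key} shows that $|\hf|>|\ha(n,k,t+3)|$ forces $|\thf_2([t+1])|>2\binom{n-t-3}{k-t-2}$. Crude bounds suffice here because the threshold is only a constant times $\binom{n-t-3}{k-t-2}$.

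Then Proposition \ref{prop-F78} puts a walk set $(t+3+j,t+5+j,\ldots)$ into $\thf_2([t+1])$, with $j\ge 3$ chosen maximal. The cross $s$-intersecting relations of Lemma \ref{lem-3.2} between $\thf_2([t+1])$ and every other piece then shrink $|\hf\setminus\hb|$ to below $\frac{6t}{5}\binom{n-t-3}{k-t-j-2}$. This is beaten by $|\hb\setminus\hf|\ge\binom{n-t-j-2}{k-t-1}-\binom{n-k-1}{k-t-1}-\binom{n-t-j-2}{k-t-2}$, and the extreme case $j=k-t-2$ gives $\hf\subset\hb(n,k,t+3)$. Your closing idea of cross-intersection between layers points this way, but the decisive step is this ``one piece large $\Rightarrow$ all others tiny'' mechanism.

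\textbf{A smaller point on Step 1.} When a shift would create a $t$-star, the family is not ``close to $\ha$ or $\hb$''. Instead, the link of the new kernel is a non-trivial intersecting family of $(k-t)$-sets on $n-t$ points. Hilton--Milner then gives $|\hf|\le\binom{n-t-1}{k-t-1}-\binom{n-k-1}{k-t-1}+1<|\hb(n,k,t+3)|$.
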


\section{Tools of the Proofs}

Recall the {\it lexicographic order} $A <_{L} B$ for $A,B\in \binom{[n]}{k}$ defined by, $A<_L B$ iff $\min\{i\colon i\in A\setminus B\}<\min\{i\colon i\in B\setminus A\}$. For $n>k>0$ and $\binom{n}{k}\geq m>0$ let $\hl(n,k,m)$ denote the family of the first $m$ sets $A\in \binom{[n]}{k}$ in the lexicographic order.

\begin{lemma}[Hilton \cite{Hilton}]
Let $n,a,b$ be positive integers, $n\geq a+b$. Suppose that $\ha\subset \binom{[n]}{a}$ and $\hb\subset \binom{[n]}{b}$ are cross-intersecting. Then $\hl(n,a,|\ha|)$ and $\hl(n,b,|\hb|)$ are cross-intersecting as well.
\end{lemma}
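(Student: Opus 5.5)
The plan is to prove Hilton's lemma by shifting together with the Kruskal--Katona theorem. I will use the latter in its lexicographic form: for $\hg\subset\binom{[n]}{b}$ and $c\ge b$, the upper shadow $\nabla_c(\hg)=\{C\in\binom{[n]}{c}\colon \exists G\in\hg,\ G\subset C\}$ has size at least $|\nabla_c(\hl'(n,b,|\hg|))|$, where $\hl'$ denotes the first $|\hg|$ sets in the order obtained from lex by complementation (colex on complements). The first step is to reformulate cross-intersection as a disjointness condition. $\ha$ and $\hb$ are cross-intersecting exactly when no $A\in\ha$ is disjoint from any $B\in\hb$. Equivalently, with $\hb^c=\{[n]\setminus B\colon B\in\hb\}\subset\binom{[n]}{n-b}$, no member of $\ha$ lies inside a member of $\hb^c$. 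Since $n-b\ge a$, this says precisely that $\ha\cap\Delta_a(\hb^c)=\emptyset$, where $\Delta_a$ is the $a$-shadow. Hence $|\ha|\le\binom{n}{a}-|\Delta_a(\hb^c)|$.

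The second step applies Kruskal--Katona to $\hb^c$. The shadow of a family of $(n-b)$-sets of given size is minimized by the initial segment of colex. Complements of the first $|\hb|$ sets of $\binom{[n]}{b}$ in lex order are exactly the last sets in lex order, i.e. the family $\{[n]\setminus L\colon L\in\hl(n,b,|\hb|)\}$. One checks (this is the standard duality) that its $a$-shadow is a final segment of $\binom{[n]}{a}$ in lex order, and that it has the minimum possible size. So $|\Delta_a(\hl(n,b,|\hb|)^c)|\le|\Delta_a(\hb^c)|$, and therefore
\[
|\ha|\le \binom{n}{a}-\bigl|\Delta_a(\hl(n,b,|\hb|)^c)\bigr|.
\]

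The final step concludes. The family of $a$-sets that intersect every member of $\hl(n,b,|\hb|)$ is the complement of $\Delta_a(\hl(n,b,|\hb|)^c)$. Since that shadow is a final segment in lex, this complement is an initial lex segment of $\binom{[n]}{a}$, and its size is at least $|\ha|$ by the inequality above. Hence $\hl(n,a,|\ha|)$ is contained in it, which is exactly cross-intersection of the two lex families.

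The main obstacle is the structural claim in the second step: that the shadow of complements of an initial lex segment is a final lex segment, and is extremal. I would handle this by identifying lex order on $b$-sets with colex on complements under the reversal $i\mapsto n+1-i$, so that the Kruskal--Katona theorem applies verbatim. The fact that shadows of colex initial segments are colex initial segments then translates back to the final-segment statement. The hypothesis $n\ge a+b$ is used only to ensure $a\le n-b$, so that the shadow operation makes sense.
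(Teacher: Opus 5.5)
The paper gives no proof of this lemma. It quotes it from Hilton and uses it as a black box, so there is no internal argument to compare against.

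Your proof is correct, and it is the standard derivation of Hilton's lemma from the Kruskal--Katona theorem:
\begin{itemize}
\item Cross-intersection means exactly $\ha\cap\Delta_a(\hb^c)=\emptyset$.
\item Complementation reverses lex order, so $\hl(n,b,|\hb|)^c$ is a lex-final segment of $\binom{[n]}{n-b}$.
\item The reversal $i\mapsto n+1-i$ maps colex-initial segments onto lex-final segments. So this family is an isomorphic copy of a colex-initial segment, its $a$-shadow has minimum size, and that shadow is itself a lex-final segment.
\item Hence the $a$-sets meeting every member of $\hl(n,b,|\hb|)$ form a lex-initial segment of size at least $|\ha|$, which contains $\hl(n,a,|\ha|)$.
\end{itemize}

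Two small remarks. First, you use Kruskal--Katona for the $a$-shadow rather than the immediate shadow. That follows by iterating, since shadows of colex-initial segments are again colex-initial and nested initial segments have nested shadows; one sentence saying so would make the step self-contained.

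Second, your opening upper-shadow formulation is garbled. Read literally, ``the order obtained from lex by complementation'' is reverse lex, and its initial segments do not minimize upper shadows. For example, with $n=5$, $b=2$, $c=3$, the family $\{34,35,45\}$ has an upper shadow of $7$ sets, while $\{12,13,14\}$ has one of $6$. The correct statement is that lex-initial segments minimize upper shadows; your parenthetical ``colex on complements'' gives an isomorphic copy of those, so it disagrees with your main phrasing. You never use this sentence, and the argument runs entirely through lower shadows, so the proof is unaffected. Still, the sentence should be fixed or deleted.
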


Let us recall the shifting operator invented by Erd\H{o}s, Ko and Rado \cite{EKR}, which has proved to be the most powerful tool in extremal set theory.

For $\hf\subset \binom{[n]}{k}$ and $1\leq i<j\leq n$, define the $(i,j)$-shift $S_{ij}$ by
$$S_{ij}(\hf)=\left\{S_{ij}(F)\colon F\in\hf\right\},$$
where
$$S_{ij}(F)=\left\{
                \begin{array}{ll}
                 F':= (F\setminus\{j\})\cup\{i\}, & \mbox{ if } j\in F, i\notin F \text{ and } F'\notin \hf; \\[5pt]
                  F, & \hbox{otherwise.}
                \end{array}
              \right.
$$

A family $\hf\subset \binom{[n]}{k}$ is called {\it shifted} (or {\it initial}) if $S_{ij}(\hf)=\hf$ holds for all $1\leq i<j\leq n$.

\begin{proposition}[\cite{F87}]\label{prop-2.1}
By  repeated shifting, one can transform an arbitrary $k$-graph into a shifted $k$-graph with the same number of edges.
\end{proposition}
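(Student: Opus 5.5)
The statement to prove is Proposition \ref{prop-2.1}: repeated shifting turns any $\hf\subset\binom{[n]}{k}$ into a shifted family with the same number of sets. The plan is a potential-function argument with three steps.

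First, I will check that each shift $S_{ij}$ preserves the size and uniformity of the family. The map $F\mapsto S_{ij}(F)$ is injective on $\hf$. Suppose two distinct sets $F\ne G$ had the same image. Then exactly one of them, say $F$, was moved, so $S_{ij}(F)=F'=(F\setminus\{j\})\cup\{i\}$. The other is fixed, so $G=F'$. But $F$ is moved only when $F'\notin\hf$, contradicting $G\in\hf$. Hence $|S_{ij}(\hf)|=|\hf|$. Every image set still has exactly $k$ elements, since one element is swapped for another.

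Second, I will define the potential
\[
w(\hf)=\sum_{F\in\hf}\sum_{x\in F}x .
\]
If $S_{ij}(\hf)\neq\hf$, then at least one set $F$ is replaced by $F'$ with $i<j$. Each such replacement lowers the sum by $j-i>0$, and unmoved sets contribute the same amount as before. So every shift that changes the family strictly decreases $w$.

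Third, I will run the following procedure: as long as some pair $i<j$ has $S_{ij}(\hf)\neq\hf$, apply that shift. The potential $w$ is a positive integer, and it is bounded below by $|\hf|\binom{k+1}{2}$. Since each step strictly decreases $w$, the procedure stops after finitely many steps. When it stops, $S_{ij}(\hf)=\hf$ for all $1\le i<j\le n$, which is exactly the definition of shifted. By the first step, the size of the family was preserved at every step.

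The only point needing care is the injectivity argument in the first step, and it relies precisely on the clause ``$F'\notin\hf$'' in the definition of $S_{ij}$. Everything else is routine.
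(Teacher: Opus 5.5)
Your argument is correct and is the standard proof from \cite{F87}, which the paper cites rather than reproves. Injectivity of $F\mapsto S_{ij}(F)$ (resting on the clause $F'\notin\hf$) gives $|S_{ij}(\hf)|=|\hf|$, and the weight $\sum_{F\in\hf}\sum_{x\in F}x$ drops by $j-i$ for each moved set, so the process must stop at a shifted family. You should add one line justifying your claim that exactly one of $F,G$ is moved: if both were moved, then $F'=G'$ together with $j\in F\cap G$ and $i\notin F\cup G$ would force $F=G$.
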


Let us define the {\it shifting partial order} $\prec$. For two $k$-sets $A$ and $B$ where $A=\{a_1,\ldots,a_k\}$, $a_1<\ldots<a_k$ and $B=\{b_1,\ldots,b_k\}$, $b_1<\ldots<b_k$ we say that $A$ {\it precedes} $B$ and denote it by $A\prec B$ if $a_i\leq b_i$ for all $1\leq i\leq k$.

Shifted families have the following nice properties.

\begin{proposition}[\cite{F87}]
If $\hf\subset \binom{[n]}{k}$ is shifted, then  $A\prec B$ and $B\in \hf$ imply $A\in \hf$.
\end{proposition}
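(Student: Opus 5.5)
We argue by induction on $\sum_{i=1}^k a_i$, where $B=\{b_1<\dots<b_k\}\in\hf$ and $A=\{a_1<\dots<a_k\}$ satisfy $A\prec B$. If $A=B$ there is nothing to prove. Otherwise we move from $B$ toward $A$ one elementary step at a time. Each step is a single shift $S_{ij}$ that replaces one element $j$ of the current set by a smaller element $i$ not in it, and we check that it keeps both properties: the new set still precedes the current one, and $A$ still precedes the new set.

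\emph{The single step.} Let $C=\{c_1<\dots<c_k\}\in\hf$ with $A\prec C$ and $A\neq C$. Choose the largest index $m$ with $a_m<c_m$. For $\ell>m$ we have $a_\ell=c_\ell$, so $c_{m+1}=a_{m+1}>a_m$. Now set $i=\max\{x\colon a_m\le x<c_m,\ x\notin C\}$. This set is nonempty for the following reason. The elements $c_1<\dots<c_{m-1}$ lie below $c_m$, but only those $c_\ell$ with $a_m\le c_\ell$ can fill $[a_m,c_m-1]$. Since $c_\ell\ge a_\ell$, an element with $c_\ell\ge a_m$ must have index $\ell$ small enough that the interval still works. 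The simplest clean choice is to take $i=c_m-1$ when $c_m-1\notin C$.

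In general, choose the index $m$ so that $c_m-1\notin C$ and $c_m-1\ge a_m$. Pick the smallest $m'$ such that $c_{m'},c_{m'+1},\dots,c_m$ are consecutive integers, and use $m'$ instead. We have $c_{m'}>a_m\ge a_{m'}+(m-m')$, hence $c_{m'}-1\ge a_{m'}$. By minimality of $m'$, the element $c_{m'}-1$ is not in $C$.

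Put $C'=(C\setminus\{c_{m'}\})\cup\{c_{m'}-1\}$. Its sorted elements coincide with those of $C$ except in position $m'$, which drops by one while the order is preserved. Hence $A\prec C'\prec C$.

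\emph{Membership.} Since $\hf$ is shifted, $S_{c_{m'}-1,\,c_{m'}}(\hf)=\hf$. The shift sends $C$ to $C'$ unless $C'$ already lies in $\hf$, and sending $C$ to $C'$ while keeping the family unchanged also forces $C'\in\hf$. In either case $C'\in\hf$. The coordinate sum has dropped by $1$, so the induction applies and the process ends at $A$, giving $A\in\hf$.

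The only delicate point is finding a valid index with the target position free. We resolve it by walking down the block of consecutive elements, as above.
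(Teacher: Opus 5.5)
The paper gives no proof of this proposition; it is quoted from Frankl's 1987 shifting survey, so there is no in-paper argument to compare against. Your argument is correct in substance. Take any $m$ with $a_m<c_m$, and let $m'$ be the start of the maximal run of consecutive integers $c_{m'},\dots,c_m$. Then $c_{m'}-1\notin C$ and $c_{m'}-1\ge a_{m'}$. So the adjacent shift $S_{c_{m'}-1,\,c_{m'}}$ keeps you inside $\hf$, preserves $A\prec C'$, and lowers the coordinate sum by one. Since $A\prec C'$ forces $\sum_\ell c'_\ell\ge\sum_\ell a_\ell$, with equality only when $C'=A$, the process ends at $A$.

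Several sentences in the write-up are false as stated and should be repaired:
\begin{itemize}
\item The induction is on $\sum_\ell b_\ell-\sum_\ell a_\ell$, not on $\sum_\ell a_\ell$, which never changes.
\item The set $\{x\colon a_m\le x<c_m,\ x\notin C\}$ can be empty. For $A=\{1,2,3\}$ and $C=\{2,3,4\}$ the largest $m$ is $3$, and the set is $\{3\}\setminus C=\emptyset$. Delete that paragraph and the circular instruction to ``choose $m$ so that $c_m-1\notin C$''; start from any $m$ with $a_m<c_m$.
\item In the same example $m'=1$ and $c_{m'}=2<3=a_m$, so ``$c_{m'}>a_m$'' is wrong. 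The correct chain is $c_{m'}=c_m-(m-m')\ge a_m+1-(m-m')\ge a_{m'}+1$.
\end{itemize}

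A shorter route avoids walking down the run altogether. Let $\ell$ be the smallest index with $a_\ell\ne b_\ell$. Then $a_\ell<b_\ell$, and either $\ell=1$ or $b_{\ell-1}=a_{\ell-1}<a_\ell$, so $a_\ell\notin B$. One shift $S_{a_\ell b_\ell}$ then gives $B'=\{a_1,\dots,a_\ell,b_{\ell+1},\dots,b_k\}\in\hf$, with $A\prec B'$ and one more leading coordinate agreeing with $A$. Induction on the number of differing coordinates finishes.

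Your unit-step version buys a slightly stronger statement in exchange. It uses only the adjacent shifts $S_{i,i+1}$, so it shows that invariance under those alone already implies closure under $\prec$.
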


\begin{proposition}[\cite{F78}, c.f. also \cite{FW252}]\label{prop-F78}
Let $\hf\subset \binom{[n]}{k}$ be a shifted family. If $(1,2,\ldots,p,p+2,p+4,\ldots,2k-p)\notin \hf$ for some $0\leq p\leq k-1$, then
\[
|\hf| \leq \binom{n}{k-p-1}.
\]
If $(p,p+2,p+4,\ldots,p+2k-2)\notin \hf$ for some $2\leq p\leq k-1$, then
\[
|\hf| \leq \binom{n}{k}-\binom{n-p+2}{k}+\binom{n-p+2}{k-1}.
\]
\end{proposition}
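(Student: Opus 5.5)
The plan is to turn the condition ``$A\notin\hf$'' into a condition on every member of $\hf$, using the preceding proposition: since $\hf$ is shifted, $A\notin\hf$ forces $A\not\prec F$ for every $F\in\hf$. Then I count the sets satisfying that condition with a reflection (lattice-path) injection. Write $F=\{f_1<\dots<f_k\}$ throughout.

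\emph{First statement.} Let $A=(1,2,\ldots,p,p+2,p+4,\ldots,2k-p)$, so $a_i=i$ for $i\le p$ and $a_i=2i-p$ for $i>p$. Since $f_i\ge i$ always, $A\not\prec F$ means there is some $i>p$ with $f_i\le 2i-p-1$. Equivalently, for $\ell=2i-p-1$ we have $|F\cap[\ell]|\ge i=(\ell+p+1)/2$.

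Define the walk $d(m)=2|F\cap[m]|-m$. It starts at $d(0)=0$ and moves by $\pm1$. The condition says $d$ reaches the value $p+1$ somewhere; let $\ell$ be the first time it does. At that point $|F\cap[\ell]|=(\ell+p+1)/2$ exactly. Now reflect the initial segment:
\[
G=\bigl([\ell]\setminus F\bigr)\cup\bigl(F\setminus[\ell]\bigr).
\]
Counting gives $|[\ell]\setminus F|=(\ell-p-1)/2$, hence $|G|=k-p-1$.

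To invert the map, I claim $\ell$ is recovered from $G$ as the first $m$ with $2|G\cap[m]|-m=-(p+1)$:
\begin{itemize}
\item For $m\le\ell$ we have $2|G\cap[m]|-m=-d(m)$.
\item By the choice of $\ell$, $d(m)<p+1$ for all $m<\ell$, so $-d(m)>-(p+1)$ for $m<\ell$, while $-d(\ell)=-(p+1)$.
\end{itemize}
Once $\ell$ is known, $F$ is recovered by complementing $G$ inside $[\ell]$. Thus $F\mapsto G$ is an injection from $\hf$ into $\binom{[n]}{k-p-1}$, which gives $|\hf|\le\binom{n}{k-p-1}$.

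\emph{Second statement.} Let $A=(p,p+2,\ldots,p+2k-2)$, so $a_i=p+2i-2$. Split $\hf$ according to whether $F$ meets $[p-2]$.
\begin{itemize}
\item The sets meeting $[p-2]$ number at most $\binom nk-\binom{n-p+2}{k}$.
\item For $F$ with $F\cap[p-2]=\emptyset$, the condition $A\not\prec F$ gives some $i$ with $f_i\le p+2i-3$. This means $F$ has at least $i$ elements in the $2i-1$ smallest elements of the ground set $[p-1,n]$, which has size $n-p+2$.
\end{itemize}
After relabeling $[p-1,n]$ as $[n-p+2]$, the second case is exactly the $p=0$ case of the argument above, namely ``some $f_i\le 2i-1$''. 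Running the same reflection injection there bounds these sets by $\binom{n-p+2}{k-1}$. Adding the two contributions gives the claimed bound.

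The only delicate step is the injectivity of the reflection map, specifically that the first hitting time of $-(p+1)$ for $G$ coincides with $\ell$. This is the first-passage argument above and follows from the $\pm1$ step property of $d$.
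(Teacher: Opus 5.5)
Your proof is correct. It is the standard lattice-path/reflection-principle argument from \cite{F78}; the paper itself only quotes the proposition and gives no proof. Both parts go through, including the reduction of the second statement to the $p=0$ case on the relabelled ground set $[p-1,n]$.

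One cosmetic fix: when $2i-p-1>n$, take the witness $m=f_i$ rather than $\ell=2i-p-1$. Since $d(f_i)=2i-f_i\ge p+1$, the first hitting time of $p+1$ lies in $[1,f_i]\subset[0,n]$, and hence $G\subset[n]$.
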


We need  the following universal bound for 2-wise $t$-intersecting families.

\begin{lemma}[\cite{F78}]\label{lem-F20}
    Let $\mathcal{F}\subset \binom{[n]}{k}$ be a 2-wise $t$-intersecting family. Then for $n\geq k\geq t\geq 1$ and  $n>2k-t$,
    \[
    |\hf| \leq \binom{n}{k-t}.
    \]
\end{lemma}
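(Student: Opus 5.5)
The plan is to reduce to shifted families and then apply the first part of Proposition \ref{prop-F78} with $p=t-1$.

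\emph{Step 1 (reduction).} The $(i,j)$-shift $S_{ij}$ keeps the size of a family and preserves the 2-wise $t$-intersecting property. This is the classical Erd\H{o}s--Ko--Rado argument: one checks $|S_{ij}(F)\cap S_{ij}(G)|\geq t$ case by case, and the only nontrivial case is handled by the fact that an unshifted set $F$ stayed because $F'=(F\setminus\{j\})\cup\{i\}\in\hf$. By Proposition \ref{prop-2.1} we may therefore assume that $\hf$ is shifted.

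\emph{Step 2 (the forbidden set).} Take $p=t-1$. Since $t\leq k$, we have $0\leq p\leq k-1$. Consider
\[
G=\{1,2,\ldots,t-1\}\cup\{t+1,t+3,\ldots,2k-t+1\}.
\]
It has $(t-1)+(k-t+1)=k$ elements. Its largest element is $2k-t+1\leq n$ because $n>2k-t$, so $G$ really is a $k$-subset of $[n]$. We claim $G\notin\hf$. Suppose $G\in\hf$, and let
\[
G'=\{1,\ldots,t-1\}\cup\{t,t+2,\ldots,2k-t\}.
\]
Comparing coordinates shows $G'\prec G$, so shiftedness gives $G'\in\hf$. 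However, the even-offset part of $G'$ and the odd-offset part of $G$ are disjoint. Hence $|G\cap G'|=t-1<t$, contradicting the 2-wise $t$-intersecting property.

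\emph{Step 3 (conclusion).} The first part of Proposition \ref{prop-F78} with $p=t-1$ now gives $|\hf|\leq\binom{n}{k-p-1}=\binom{n}{k-t}$.

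The only point needing care is Step 1, the verification that shifting preserves $t$-intersection. It is standard, and everything else is a direct application of Proposition \ref{prop-F78}. The hypothesis $n>2k-t$ is used exactly to guarantee that the set $G$ lies in $\binom{[n]}{k}$.
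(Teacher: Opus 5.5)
Your proof is correct. After shifting, $G=[t-1]\cup\{t+1,t+3,\ldots,2k-t+1\}$ and $G'=[t-1]\cup\{t,t+2,\ldots,2k-t\}\prec G$ meet in exactly $[t-1]$, so $G\notin\hf$, and Proposition \ref{prop-F78} with $p=t-1$ gives $\binom{n}{k-t}$; you use $n>2k-t$ exactly where it is needed, to make $G\subset[n]$. The paper gives no proof of this lemma and simply cites it from \cite{F78}; your derivation is the standard argument from that source, and the same parity trick the paper applies with Proposition \ref{prop-F78} in the proof of Theorem \ref{thm-main}, though it derives one result of \cite{F78} from another rather than being fully self-contained.
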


We also need the following two results.

\begin{theorem}[Hilton \cite{Hilton}]\label{thm-hilton}
     Let  $\ha_1,\ha_2,\ldots,\ha_m\subset \binom{[n]}{k}$ be pairwise cross-intersecting families. For $n\geq 2k$,
     \[
     \sum_{1\leq i\leq m} |\ha_i| \leq \left\{ \begin{array}{ll}
                m\binom{n-1}{k-1}, & \mbox{ if } m\geq \frac{n}{k}; \\[5pt]
                  \binom{n}{k}, & \hbox{otherwise.}
                \end{array}\right.
     \]
\end{theorem}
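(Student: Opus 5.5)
The plan is to reduce to lexicographic families with Hilton's lemma, show that all but the largest family must then lie in a star, and finish with a weighted inequality for a cross-intersecting pair that makes the total linear in one parameter.

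\textbf{Step 1: pass to lexicographic families.} Apply Hilton's lemma (the cross-intersecting version stated above, with $a=b=k$ and $n\geq 2k$) to each pair $\ha_i,\ha_j$. Replacing every $\ha_i$ by $\hl(n,k,|\ha_i|)$ keeps the sizes and keeps the families pairwise cross-intersecting. Relabel so that $|\ha_1|\geq |\ha_2|\geq\ldots\geq|\ha_m|$. Lexicographic initial segments are nested, so $\ha_2\subset\ha_1$, and in general $\ha_j\subset\ha_i$ for $i<j$. Hence for $j\geq 2$ the family $\ha_j$ is cross-intersecting with a superfamily of itself, and is therefore intersecting. We may assume $\ha_2\neq\emptyset$; otherwise the sum is $|\ha_1|\leq\binom nk$.

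\textbf{Step 2: the small families lie in the star of $1$.} An intersecting lexicographic initial segment is contained in $\{A\colon 1\in A\}$ when $n\geq 2k$. Indeed, the first set without $1$ is $[2,k+1]$, which comes after the whole star of $1$. That star contains $\{1\}\cup[k+2,2k]$, which is disjoint from $[2,k+1]$. So put $s:=|\ha_2|$, with $1\leq s\leq\binom{n-1}{k-1}$. Then $|\ha_j|\leq s$ for $j\geq 2$, and
\[
\sum_i|\ha_i|\leq |\ha_1|+(m-1)s .
\]

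\textbf{Step 3 (main obstacle): a weighted bound for one cross-intersecting pair.} I aim to show that if $\ha,\hb\subset\binom{[n]}{k}$ are cross-intersecting, $\hb\neq\emptyset$ is intersecting and $n\geq 2k$, then
\[
|\ha|+\Bigl(\frac nk-1\Bigr)|\hb|\leq\binom nk .
\]
Note that $\binom nk-\frac{n-k}{k}\binom{n-1}{k-1}=\binom{n-1}{k-1}$, so this is sharp for $\ha=\hb=$ the full star. I would prove it by Katona's circle method.

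1. Fix a cyclic order of $[n]$. Let $\beta\geq 1$ and $\alpha$ be the numbers of $k$-arcs that belong to $\hb$ and to $\ha$, respectively.
2. The $\hb$-arcs pairwise intersect. Their union is therefore an arc of length at least $k+\beta-1$, with $\beta\leq k$.
3. A $k$-arc meeting every $\hb$-arc must avoid a region that forces at least $\frac{n-k}{k}\beta$ of the $n$ arcs to be excluded from $\ha$. So $\alpha+\frac{n-k}{k}\beta\leq n$.

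The delicate part is exactly this arc count: counting the arcs disjoint from the $\hb$-arcs and comparing with $(n/k-1)\beta$. If the count is lossy I would instead exploit that $\hb$ is a lex segment inside the star and use Kruskal–Katona-type shadow estimates. Averaging over all cyclic orders (each $k$-set lies on $k!(n-k)!$ of them as an arc) turns the arc inequality into the set inequality above.

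\textbf{Step 4: conclude.} Apply Step 3 with $\ha=\ha_1$ and $\hb=\ha_2$. This gives
\[
\sum_i|\ha_i|\leq \binom nk+s\Bigl(m-\frac nk\Bigr).
\]
The right-hand side is linear in $s\in\bigl[0,\binom{n-1}{k-1}\bigr]$, so it is maximized at an endpoint.
\begin{itemize}
\item If $m<\frac nk$, the maximum is at $s=0$, giving the bound $\binom nk$.
\item If $m\geq\frac nk$, the maximum is at $s=\binom{n-1}{k-1}$, giving $\binom nk+\binom{n-1}{k-1}\bigl(m-\frac nk\bigr)=m\binom{n-1}{k-1}$.
\end{itemize}
This yields the two cases of Theorem \ref{thm-hilton}.
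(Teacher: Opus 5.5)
The paper does not prove this statement (it is quoted from Hilton), so I am judging your argument on its own. Steps 1, 2 and 4 are fine. Hilton's lemma applied pairwise turns the families into nested lexicographic segments, so $\ha_2,\ldots,\ha_m$ are intersecting subfamilies of $\ha_1$ lying in the star of $1$. The endpoint analysis of $\binom nk+s(m-\frac nk)$ then gives exactly the two cases. The inequality claimed in Step 3 is also true.

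The gap is that Step 3 carries all the content, and its item 3 is only asserted (with a Kruskal--Katona fallback held in reserve). Item 2 does not supply it. Pairwise intersecting $k$-arcs may cover the whole circle (e.g.\ $n=6$, $k=3$, arcs $\{0,1,2\},\{2,3,4\},\{4,5,0\}$). In any case, the size of the union of the $\hb$-arcs does not by itself tell you how many arcs are disjoint from one of them.

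The count is easy to do directly. Write $C_x=\{x,\ldots,x+k-1\}$ (indices mod $n$); then $C_y\cap C_x=\emptyset$ iff $y\in x+k+\{0,\ldots,n-2k\}$. Let $S$ be the set of starting points of the $\hb$-arcs, and let $g_1,\ldots,g_\beta$ be the cyclic gaps between consecutive points of $S$, so $\sum g_i=n$. The arcs forbidden for $\ha$ are those starting in a translate of $S+\{0,\ldots,n-2k\}$, a set of size $\sum_i\min(g_i,n-2k+1)$. This is at least $\beta+n-2k$: either some $g_i\geq n-2k+1$, or every term equals $g_i$ and the sum is $n\geq \beta+n-2k$ (as $\beta\le k$). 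Hence $\alpha+\beta\le 2k$, and with Katona's bound $\beta\le k$,
\[
\alpha+\Bigl(\frac nk-1\Bigr)\beta=(\alpha+\beta)+\frac{n-2k}{k}\,\beta\le 2k+(n-2k)=n ,
\]
which averages to your Step 3.

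The same gap count proves $\alpha+\beta\le 2k$ for \emph{any} two cross-intersecting families of arcs that are both nonempty (then automatically $\beta\le 2k-1$). With that lemma, Steps 1 and 2 can be dropped altogether. Fix a cyclic order. If $p\ge 2$ of the $\ha_i$ contain arcs, summing $\alpha_i+\alpha_j\le 2k$ over the pairs of these gives $\sum_i\alpha_i\le pk\le mk$; otherwise $\sum_i\alpha_i\le n$. Averaging gives $\sum_i|\ha_i|\le\max\{\binom nk,m\binom{n-1}{k-1}\}$, which is the theorem. Your route reduces everything to one pair inequality at the price of Hilton's lemma (hence Kruskal--Katona). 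The direct circle argument is shorter and self-contained.
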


\begin{theorem}[Li-Zhang \cite{LZ}]\label{thm-lz}
     Let  $\ha_1,\ha_2,\ldots,\ha_m\subset \binom{[n]}{k}$ be pairwise cross $t$-intersecting families. For $n\geq (t+1)(k-t+1)$,
     \[
     \sum_{1\leq i\leq m} |\ha_i| \leq \max\left\{\binom{n}{k}, m\binom{n-t}{k-t}\right\}.
     \]
\end{theorem}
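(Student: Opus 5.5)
The plan is to reduce to a statement about a single ``large'' family against the others, using shifting and the two–family theory for cross $t$-intersecting families. First, if some $\ha_i$ is empty we simply delete it. This only decreases $m$ and hence the right-hand side, so by induction on $m$ we may assume all $\ha_i\neq\emptyset$. The case $m=1$ is trivial, since $|\ha_1|\leq\binom{n}{k}$. Next we apply the shifts $S_{ij}$ simultaneously to all $m$ families. The standard argument of Proposition \ref{prop-2.1} shows that simultaneous shifting preserves pairwise cross $t$-intersection and the sizes. So we may assume every $\ha_i$ is shifted, and then by the shifting property each $\ha_i$ contains $[k]$. Order the families so that $|\ha_1|\leq\ldots\leq|\ha_m|$, and write $C=\binom{n-t}{k-t}$ and $x=|\ha_m|$.

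If $x\leq C$ we are done, since then $\sum_i|\ha_i|\leq mC$. So assume $x>C$. The key input is the two-family inequality for nonempty cross $t$-intersecting $\ha,\hb\subset\binom{[n]}{k}$ with $n\geq (t+1)(k-t+1)$:
\[
|\ha|\,|\hb|\leq \binom{n-t}{k-t}^2 .
\]
This is the product version of Theorem \ref{thm-EKR}. I would prove it by passing, via shifting and a Kruskal--Katona/Hilton-type compression (the $t$-analogue of the cross-intersecting lemma of Hilton), to initial segments of a suitable order. The inequality then reduces to the extremal pair $\ha=\hb=\hs_{[t]}\cap\binom{[n]}{k}$ versus the ``one set versus all sets meeting it in $t$ elements'' configurations. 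Applying it to each pair $(\ha_i,\ha_m)$ with $i<m$ gives $|\ha_i|\leq C^2/x$. We also need a sharper pointwise bound $|\ha_i|\leq \phi(x)$, where $\phi(x)$ is the size of the largest family cross $t$-intersecting with the lex-initial family of size $x$. This is computed from the cross-intersecting lemma of Hilton in its $t$-version: the complement of the shadow-type family determined by the initial segment.

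Consequently
\[
\sum_{i}|\ha_i|\leq x+(m-1)\min\{C^2/x,\ \phi(x)\}.
\]
The function $x+(m-1)C^2/x$ is convex in $x$, so on any interval its maximum is attained at an endpoint. At $x=C$ the bound gives $mC$. For the upper range of $x$ we use $\phi$. Since all other families are nonempty and shifted, they contain $[k]$. Hence $\ha_m$ consists only of sets meeting $[k]$ in at least $t$ elements, which gives $x\leq x_{\max}:=\binom{n}{k}-\sum_{j<t}\binom{k}{j}\binom{n-k}{k-j}$. In this top range $\phi(x)$ is tiny: only $[k]$, or a handful of sets, can be cross $t$-intersecting with a family that large. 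So the sum is at most roughly $x_{\max}+(m-1)\phi(x_{\max})$. This must be compared with $\binom{n}{k}$, which requires showing $(m-1)\phi(x)\leq \binom{n}{k}-x$ in this range.

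The main obstacle is the middle range of $x$, together with large $m$. There the convexity bound $x+(m-1)C^2/x$ at the upper endpoint can exceed both $mC$ and $\binom{n}{k}$. I would split the interval for $x$ at breakpoints of the lex order: $\ha_m\supset$ the full star of $[t]$ plus sets of the form $\{F: |F\cap[t+j]|\geq t+j-?\}$, matching the families $\ha(n,k,t+2j)$. On each piece I would use the exact value of $\phi$ rather than $C^2/x$. The case $m\geq n/k$-type threshold, analogous to Theorem \ref{thm-hilton}, decides whether $mC$ or $\binom{n}{k}$ wins. The verification that $x+(m-1)\phi(x)\leq\max\{\binom nk,mC\}$ on each piece is where the hypothesis $n\geq (t+1)(k-t+1)$ is used. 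Specifically, it ensures $\binom{n-t-1}{k-t-1}(t+1)\leq C\cdot\frac{(t+1)(k-t)}{n-t}\leq C$, so each step away from the star loses more than it gains. I expect this piecewise comparison to be the technical heart of the proof.
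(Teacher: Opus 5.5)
Your preliminary reductions are fine: discarding empty families, shifting simultaneously, and isolating the largest family $\ha_m$ with $x=|\ha_m|>\binom{n-t}{k-t}$. You also correctly see that the product bound alone cannot finish. (The paper gives no proof of this statement; it quotes it from Li and Zhang, so your proposal has to stand on its own.)

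\textbf{The key step fails.} The pointwise bound $|\ha_i|\le\phi(|\ha_m|)$, with $\phi$ read off from lexicographic initial segments, rests on a ``$t$-version of Hilton's lemma''. For $t\ge 2$ that statement is false, even for shifted families and $n$ in the range of the theorem. Take $t=2$, $k\ge 4$, $n\ge 3(k-1)$ and
\[
\ha=\left\{A\in\binom{[n]}{k}\colon [2]\subset A\right\}\cup\ha(n,k,4),\qquad \hb=\left\{B\in\binom{[n]}{k}\colon [2]\subset B,\ B\cap\{3,4\}\neq\emptyset\right\}.
\]
Both families are shifted, $\hb$ is $2$-intersecting, and $\ha,\hb$ are cross $2$-intersecting. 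So $(\hb,\ldots,\hb,\ha)$ is an admissible configuration whose largest member has size $x=|\ha|=\binom{n-2}{k-2}+2\binom{n-4}{k-3}>\binom{n-2}{k-2}$.

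However, $\hl(n,k,x)$ contains the star of $[2]$, all sets $\{1,3,4\}\cup X$ with $X\subset[5,n]$, and all sets $\{1,3,5\}\cup X$ with $X\subset[6,n]$. Hence every family cross $2$-intersecting with $\hl(n,k,x)$ lies in $\{B\colon [2]\subset B,\ 3\in B \mbox{ or } \{4,5\}\subset B\}$. This gives $\phi(x)\le\binom{n-3}{k-3}+\binom{n-5}{k-4}<\binom{n-3}{k-3}+\binom{n-4}{k-3}=|\hb|$.

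Equivalently, $\hl(n,k,|\ha|)$ and $\hl(n,k,|\hb|)=\hb$ are not cross $2$-intersecting, since $\{1,3,5\}\cup X$ meets $\{1,2,4\}\cup Y$ only in $\{1\}$. So your bound fails exactly in the regime $x>\binom{n-t}{k-t}$ where you need it. For the same reason, your sketch of $|\ha||\hb|\le\binom{n-t}{k-t}^2$ by compressing to initial segments of ``a suitable order'' does not work. That product inequality is a substantial theorem in its own right and would have to be cited, not rederived this way.

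\textbf{The core argument is not carried out.} The piecewise comparison on the middle range of $x$ for large $m$ is announced but not done. Without a correct description of the extremal partner of a family of given size, there is nothing to carry it out with.

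Since $\sum_i|\ha_i|\le|\ha_m|+(m-1)\max_{i<m}|\ha_i|$, your strategy of bounding each $|\ha_i|$ by a function of $x$ works only if the following sharp weighted two-family inequality holds:
\[
|\ha|+(m-1)|\hb|\le\max\left\{\binom{n}{k},\,m\binom{n-t}{k-t}\right\}
\]
for all nonempty cross $t$-intersecting $\ha,\hb\subset\binom{[n]}{k}$ with $|\hb|\le|\ha|$. Neither the product bound nor any lex-based $\phi$ yields this for $t\ge 2$. Your proposal offers no method for establishing it, and that is where the actual content of the theorem lies.
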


For $\hf\subset \binom{[n]}{k}$ and $P\subset Q\subset [n]$, define
\[
\hf(P,Q) := \{F\setminus Q\colon F\in \hf,\ F\cap Q=P\}.
\]
For $P=Q$, we use $\hf(P)$ to denote $\hf(P,Q)$ for short.

\begin{proposition}
 Let $r\geq 3$ and let $\mathcal{F}\subset \binom{[n]}{k}$ be a non-trivial $r$-wise $t$-intersecting family of maximal size. Then for $n\geq (r-1)(k-t-r+3)+t$ we may assume that $\mathcal{F}$ is shifted.
\end{proposition}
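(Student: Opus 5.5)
Starting from an arbitrary non-trivial $r$-wise $t$-intersecting $\hf$ of maximal size, I would apply shifts $S_{ij}$ one at a time, as in Proposition \ref{prop-2.1}. The $r$-wise $t$-intersecting property is preserved by every shift; this is standard, and one checks directly that an $r$-tuple of shifted sets whose intersection drops below $t$ can be traced back to an $r$-tuple of original sets with the same defect. Size is also preserved. So the only thing that can go wrong is that some shift destroys non-triviality, i.e.\ $S_{ij}(\hf)$ has $|\cap S_{ij}(\hf)|\geq t$. The plan is to show that under $n\geq (r-1)(k-t-r+3)+t$ this never happens, or, if it threatens to, to replace $\hf$ by an isomorphic family and continue.

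Suppose $\hf$ is non-trivial but $\hg=S_{ij}(\hf)$ satisfies $|\cap\hg|\geq t$. Since $\hf$ is non-trivial, $\cap\hg$ must contain $i$, and $\cap\hf\supseteq (\cap\hg)\setminus\{i\}$. So there is a $(t-1)$-set $T\subset[n]\setminus\{i,j\}$ with $T\subset F$ for all $F\in\hf$, and moreover every $F\in\hf$ meets $\{i,j\}$. Consequently
\[
\hf\subset\{F: T\cup\{i\}\subset F\}\cup\{F: T\cup\{j\}\subset F\}.
\]
By non-triviality both parts are non-empty, and in fact $\hf$ is not contained in either star. Passing to $\hf(T)$ yields a family on $n-t+1$ points of $(k-t+1)$-sets that is non-trivial $r$-wise $1$-intersecting, with all members meeting $\{i,j\}$. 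By Fact \ref{fact-key}, applied $r-2$ times, $\hf$ is 2-wise $(t+r-2)$-intersecting.

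I would then bound $|\hf|$ directly. Write $\hf_i=\hf(T\cup\{i\},T\cup\{i,j\})$, $\hf_j$ analogously, and $\hf_{ij}$ for sets containing both. Non-triviality together with $r$-wise intersection forces $\hf_i$ and $\hf_j$ to be cross-intersecting in a strong (roughly $(r-1)$-fold, via Fact \ref{fact-key}) sense. Then $|\hf|\leq \binom{n-t-1}{k-t-1}+|\hf_i|+|\hf_j|$, and I would bound $|\hf_i|+|\hf_j|$ using Hilton's lemma or Theorem \ref{thm-lz}. The target is to show $|\hf|<\max\{|\ha(n,k,t+r)|,|\hb(n,k,t+r)|\}$, contradicting maximality, once $n\geq (r-1)(k-t-r+3)+t$. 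The threshold appears naturally from comparing with the $(t+r-2)$-intersecting bound, as in Corollary \ref{cor-1.4}(ii).

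There is a simpler alternative I would try first. If the shift would make the family trivial, skip that pair $(i,j)$. The standard argument, as in Frankl's work, shows that one can order the shifts so that, when no further allowed shift changes $\hf$, the family is shifted up to a permutation. The remaining bad case is exactly the two-star structure above, which must be excluded by the counting. The main obstacle is this counting step: showing that a family lying in the union of two $t$-stars sharing $t-1$ elements is strictly smaller than $|\ha(n,k,t+r)|$ in the stated range of $n$. That comparison is where the precise bound $(r-1)(k-t-r+3)+t$ must be used.
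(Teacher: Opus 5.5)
Your reduction is right: a shift that destroys non-triviality forces $\cap\hf=T$ with $|T|=t-1$ and $T\cap\{i,j\}=\emptyset$. Every member of $\hf$ then meets $\{i,j\}$, and $\hf$ is 2-wise $(t+r-2)$-intersecting. But the substance of the proposition is the counting step, which you leave open, and the route you sketch for it cannot work. Bounding the sets through $T\cup\{i,j\}$ (your $\hf_{ij}$) trivially by $\binom{n-t-1}{k-t-1}$ already loses too much, whatever you then do with $\hf_i,\hf_j$.

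Take $r=3$, $t=1$, $n=2k-1$ (the threshold) and $m=k-1$. Let $\hf_i=\hf_j$ be all $m$-subsets of $[n]\setminus\{i,j\}$ through a fixed pair. This choice satisfies every condition relating $\hf_i$ and $\hf_j$: both are non-empty, cross $2$-intersecting, and $3$-wise intersecting. So your inequality can give nothing better than $\binom{2m-1}{m-1}+2\binom{2m-3}{m-2}$. This exceeds $|\hb(n,k,4)|=\binom{2m-1}{m-1}+1$, and it exceeds $|\ha(n,k,4)|=4\binom{2m-3}{m-2}+\binom{2m-3}{m-3}$ by $\binom{2m-3}{m-1}$. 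To rescue this line you would have to exploit how $\hf_{ij}$ interacts with $\hf_i$ and $\hf_j$, i.e.\ reprove a Hilton--Milner type theorem by hand.

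Your fallback of skipping bad shifts to get a family ``shifted up to a permutation'' is not a standard fact. Skipping $S_{ij}$ leaves a family that is not $S_{ij}$-stable, and in any case it just returns you to the same counting.

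The missing idea is to count $\hg=S_{ij}(\hf)$ instead of $\hf$. It has the same size, and $\cap\hg=T\cup\{i\}$ exactly. Shifting preserves the 2-wise $(t+r-2)$-intersecting property, so the link $\hg(T\cup\{i\})$ is a non-trivial 2-wise $(r-2)$-intersecting family of $(k-t)$-subsets of an $(n-t)$-set; this is where $r\geq 3$ enters. Theorem \ref{thm-HMF} applies to this link exactly when $n-t\geq (r-1)\bigl((k-t)-(r-2)+1\bigr)$, which is the hypothesis $n\geq (r-1)(k-t-r+3)+t$. The threshold does not come from Corollary \ref{cor-1.4}(ii), whose bound is $(t+r-1)(k-t-r+3)$.

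The theorem gives $|\hf|\leq\max\{|\ha(n-t,k-t,r)|,|\hb(n-t,k-t,r)|\}$. The explicit size formulas show $|\hb(n-t,k-t,r)|=|\hb(n,k,t+r)|-t$ and $|\ha(n-t,k-t,r)|=|\ha(n,k,t+r)|-t\binom{n-t-r}{k-t-r+1}$. Hence $|\hf|<h(n,k,r,t)$, contradicting maximality. So a bad shift never occurs, and $\hf$ can be shifted all the way.
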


\begin{proof}
Since the shifting operation preserves the $r$-wise $t$-intersecting property (cf. \cite{F87}), one can apply $S_{ij}$, $1\leq i<j\leq n$, to $\mathcal{F}$ repeatedly whenever $S_{ij}(\mathcal{F})$ is also non-trivial. By Proposition \ref{prop-2.1}, the only possible trouble is that $S_{ij}(\hf)$ becomes a $t$-star.

Suppose that $\mathcal{F}$ is not a $t$-star but $S_{ij}(\mathcal{F})$ is a $t$-star. Let $\mathcal{G}=S_{ij}(\mathcal{F})$ and let
\[
\cap \{G\colon G\in \mathcal{G}\} = S\cup \{i\}.
\]
Then $\cap \mathcal{G}(S\cup \{i\})=\emptyset$.
Since $\mathcal{G}$ is a $t$-star,  $|S|\geq t-1$. Note that $S\subset F$ for all $F\in \mathcal{F}$ and $\mathcal{F}$ is not a $t$-star. We infer $|S|=t-1$.

By Fact \ref{fact-key},  $\mathcal{F}$ is $2$-wise $(t+r-2)$-intersecting. Since  the shifting operation also preserves the $2$-wise $(t+r-2)$-intersecting property,  $\mathcal{G}$ is $2$-wise $(t+r-2)$-intersecting. Thus $\mathcal{G}(S\cup \{i\})$ is non-trivial $2$-wise $(r-2)$-intersecting. By Theorem \ref{thm-HMF}, for $n-t\geq (r-2+1)((k-t)-(r-2)+1)=(r-1)(k-t-r+3)$ we have
\begin{align*}
|\mathcal{F}| =|\mathcal{G}(S\cup \{i\})|&\leq \max\left\{|\mathcal{A}(n-t,k-t,r)|,|\mathcal{B}(n-t,k-t,r)|\right\}\\[3pt]
&< \max\left\{|\mathcal{A}(n,k,r+t)|,|\mathcal{B}(n,k,r+t)|\right\}\\[3pt]
&\leq h(n,k,r,t),
\end{align*}
contradicting the fact that $\hf$ is of the maximum size.
Thus one can apply the shifting operator to $\hf$ repeatedly without turning $\hf$ into a $t$-star.
\end{proof}

Most likely the condition $n\geq (r-1)(k-t-r+3)+t$ can be relaxed further. However we do not know how to prove it.

\begin{corollary}\label{cor-2.1}
Let $\mathcal{F}\subset \binom{[n]}{k}$ be a non-trivial $3$-wise $t$-intersecting family with $|\mathcal{F}|=h(n,k,3,t)$. Then for $n\geq 2k-t$ we may assume that $\mathcal{F}$ is shifted.
\end{corollary}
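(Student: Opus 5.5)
The plan is to specialize the preceding Proposition to $r=3$ and check that the hypothesis becomes exactly $n\geq 2k-t$. With $r=3$ the threshold $(r-1)(k-t-r+3)+t$ equals $2(k-t)+t=2k-t$. So once we confirm that every step of the Proposition's proof is valid for $r=3$ in this range, the Corollary follows.

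To be safe I will re-trace that argument for $r=3$.

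\emph{Step 1.} Start from a family $\mathcal{F}$ of size $h(n,k,3,t)$ and shift it repeatedly. Shifting preserves the 3-wise $t$-intersecting property, so the only danger is that some $\mathcal{G}=S_{ij}(\mathcal{F})$ becomes a $t$-star while $\mathcal{F}$ is not one.

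\emph{Step 2.} In that case, as in the proof, $\cap\mathcal{G}=S\cup\{i\}$ with $|S|=t-1$. By Fact \ref{fact-key}, $\mathcal{F}$, and hence $\mathcal{G}$, is 2-wise $(t+1)$-intersecting. Therefore $\mathcal{G}(S\cup\{i\})\subset\binom{[n]\setminus(S\cup\{i\})}{k-t}$ is a non-trivial intersecting family on $n-t$ points, and it has the same size as $\mathcal{F}$.

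\emph{Step 3.} Apply Theorem \ref{thm-HMF} with $t=1$ on ground set size $n-t$ and uniformity $k-t$. Its hypothesis reads $n-t\geq 2(k-t)$, which is precisely $n\geq 2k-t$. This is the Hilton--Milner theorem, and it gives
$|\mathcal{F}|\leq\max\{|\mathcal{A}(n-t,k-t,3)|,|\mathcal{B}(n-t,k-t,3)|\}$.

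\emph{Step 4.} It remains to verify the strict inequality
\[
\max\{|\mathcal{A}(n-t,k-t,3)|,|\mathcal{B}(n-t,k-t,3)|\}<\max\{|\mathcal{A}(n,k,t+3)|,|\mathcal{B}(n,k,t+3)|\}.
\]
This is the one place needing a genuine check, and I expect it to be the main (though mild) obstacle. I would prove it via explicit embeddings, taking the ground set of the left-hand families to be $[n]\setminus[t]$, relabelled so that $t+1,\dots,t+3$ play the role of the core.

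For $\mathcal{A}$: adjoin $[t]$ to each set of $\mathcal{A}(n-t,k-t,3)$. This yields exactly the members of $\mathcal{A}(n,k,t+3)$ that contain $[t]$. Since $k\geq t+3$, the set $[t+3]\setminus\{1\}$ together with $k-t-2$ further elements outside $[t+3]$ lies in $\mathcal{A}(n,k,t+3)$ but omits $1$. So the inclusion is strict.

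For $\mathcal{B}$: a direct comparison of the formulas for $|\mathcal{B}|$ does the job. The main terms $\binom{n-t-1}{k-t-1}-\binom{n-k-1}{k-t-1}$ coincide on both sides, while the additive constant is $1$ for $\mathcal{B}(n-t,k-t,3)$ versus $t+1$ for $\mathcal{B}(n,k,t+3)$. Since $t\geq 1$, the inequality is strict.

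\emph{Step 5.} Since the right-hand side is at most $h(n,k,3,t)$, Step 4 contradicts the maximality of $|\mathcal{F}|$. Hence shifting never produces a $t$-star, and the limiting shifted family obtained via Proposition \ref{prop-2.1} is a non-trivial 3-wise $t$-intersecting family of size $h(n,k,3,t)$, as claimed.
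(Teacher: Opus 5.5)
Your proposal is correct and is essentially the paper's argument: the Corollary is just the case $r=3$ of the preceding Proposition, whose threshold $(r-1)(k-t-r+3)+t$ becomes $2k-t$, and your Steps 1--3 and 5 retrace that proof. Your Step 4 also supplies the strict inequality that the paper only asserts: the $\mathcal{A}$-sizes differ by $t\binom{n-t-3}{k-t-2}>0$ and the $\mathcal{B}$-sizes by exactly $t$.
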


\begin{lemma}\label{lem-2.7}
   Let $s\geq t>0$ and let $\hf\subset 2^{[n]}$ be $r$-wise $t$-intersecting, shifted.  Suppose that $A_1,A_2,\ldots,A_r\subset [s]$ satisfy $|A_1\cap A_2\cap \ldots \cap A_r|<t$, $B_i\in \hf(A_i,[s])$, $i=1,2,\ldots,r$. Then
    \begin{align}\label{ineq-lem2.7-1}
        |B_1\cap B_2\cap \ldots \cap B_r|\geq (r-1)s+t-|A_1|-|A_2|-\cdots-|A_r|.
    \end{align}
\end{lemma}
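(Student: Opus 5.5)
We argue by contradiction and use shiftedness to push elements of the $B_i$'s into $[s]$. Let $\ell=|B_1\cap\cdots\cap B_r|$ and suppose $\ell<(r-1)s+t-\sum_i|A_i|$. Set $D=B_1\cap\cdots\cap B_r\subset[s+1,n]$. The idea is to replace the sets $F_i=A_i\cup B_i\in\hf$ by sets $F_i'\prec F_i$. Each $F_i'$ is obtained by exchanging some elements of $B_i\setminus D$ for elements of $[s]\setminus A_i$; since $\hf$ is shifted, each $F_i'$ then lies in $\hf$. We will arrange the exchanges so that $|F_1'\cap\cdots\cap F_r'|<t$, contradicting the $r$-wise $t$-intersecting property.

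The key counting step concerns the complements $C_i=[s]\setminus A_i$, with $|C_i|=s-|A_i|$. We want to fill each position $x\in[s]$ that lies in $\bigcap A_i$ only if forced; otherwise we want every $x\in[s]$ to be missing from at least one of the final sets. Initially the sets $A_i$ have fewer than $t$ common elements. Now remove $D$ from the common intersection. An element $d\in D$ lies in all $B_i$, so it cannot be removed by simply taking it out of one set. Instead, to destroy $d$ we use one exchange: in one chosen $F_i$ we replace $d$ by some element $x\in C_i$. This keeps $d$ out of $F_i'$. The new element $x$ does not become common, because we choose $x$ so that it is still missing from some other $F_j'$.

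So it suffices to perform $\ell$ exchanges, one per $d\in D$, using distinct slots $(i,x)$ with $x\in C_i$, such that after the exchanges every $x\in[s]$ outside $\bigcap A_i$ is still missing from some $F_j'$. Each $x\in[s]\setminus\bigcap_i A_i$ lies in $m_x\ge1$ of the sets $C_i$, and we may spend $m_x-1$ of those slots while keeping one. The number of usable slots is therefore
\[
\sum_{x\in[s]}\max(m_x-1,0)\;\ge\;\sum_i|C_i|-\Bigl(s-\Bigl|\bigcap_i A_i\Bigr|\Bigr).
\]
We need this to be at least $\ell$. The final intersection is $\bigcap A_i$, of size $<t$, so we in fact only need to destroy enough of $D$ that the total stays below $t$. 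The precise count is: we need $\ell+|\bigcap A_i|-(t-1)$ exchanges. These are available provided
\[
rs-\sum_i|A_i|-s+\Bigl|\bigcap_i A_i\Bigr|\;\ge\;\ell+\Bigl|\bigcap_i A_i\Bigr|-t+1,
\]
that is, $\ell\le (r-1)s+t-1-\sum_i|A_i|$. This is exactly the negation of \eqref{ineq-lem2.7-1}, which gives the contradiction. We also need $F_i'\prec F_i$. This holds since we replace an element $d>s$ by some $x\le s$ not already in $F_i$, and $F_i'$ is then obtained from $F_i$ by a sequence of such exchanges, which is a legitimate shift-down.

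The main obstacle is the bookkeeping: within one $F_i$ we may make several exchanges, and the exchanges must use distinct $d$'s and distinct $x$'s. We handle this by a simple greedy assignment that processes the elements $x$ and assigns their spare slots to distinct elements of $D$. We also need $B_i\supseteq D$ to contain enough elements, which is automatic since each slot consumes one $d\in D\subset B_i$.
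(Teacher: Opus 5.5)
Your proof is correct. It uses the same elementary move as the paper: inside one $F_i=A_i\cup B_i$, trade a common element $d\in B_1\cap\cdots\cap B_r$ (so $d>s$) for some $x\in[s]\setminus A_i$, which stays in $\hf$ by shiftedness. The difference is in how the move is organized.

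The paper argues by an extremal counterexample. It takes a counterexample maximizing $\sum_i|A_i|$ and observes that the case $\sum_i|A_i|=(r-1)s+t-1$ is trivial. Otherwise it performs a single exchange to get a counterexample with larger $\sum_i|A_i|$. The new element $x$ is chosen either anywhere outside $\bigcap A_i$ (when $|\bigcap A_i|\le t-2$, so $x$ may become common) or among elements lying in fewer than $r-1$ of the $A_i$.

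You instead perform all exchanges at once, never allowing a new element to become common, and balance the count explicitly. The number of spare slots is exactly $(r-1)s-\sum_i|A_i|+|\bigcap_i A_i|$, and the number of exchanges needed is $\ell+|\bigcap_i A_i|-t+1\le \ell$. Their difference is $(r-1)s+t-1-\sum_i|A_i|-\ell\ge 0$, the same slack the paper's induction uses up one step at a time.

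Your version makes the arithmetic transparent and exhibits the violating $r$-tuple explicitly. The paper's version needs no bookkeeping about distinct slots.

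Two minor points. First, the preliminary sentence that it suffices to perform $\ell$ exchanges describes something your hypothesis only guarantees when $|\bigcap A_i|=t-1$, so rely solely on the precise count that follows it. Second, no greedy procedure is needed. Choose any $e$ slots using at most $m_x-1$ slots per $x$ and pair them bijectively with any $e$ elements of $D$. This works because slots sharing the same $i$ have distinct $x$'s and each $d$ lies in every $B_i$.
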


\begin{proof}
Suppose that $A_i\cup B_i\in \hf$, $1\leq i\leq r$, form a counter-example with $|A_1|+|A_2|+\cdots +|A_r|$ as large as possible.
Note that $|A_1\cap A_2\cap \cdots \cap A_r|\leq t-1$ implies
\begin{align}\label{ineq-lem2.7-2}
    |A_1|+|A_2|+\cdots +|A_r|\leq (r-1)s+t-1.
\end{align}
Should we have equality in \eqref{ineq-lem2.7-2}, \eqref{ineq-lem2.7-1} would reduce to $|B_1\cap B_2\cap \ldots\cap B_r|\geq 1$, which holds evidently by $\hf$ being $r$-wise $t$-intersecting. I.e., $A_i\cup B_i$, $1\leq i\leq r$ is not a counter-example, a contradiction. Thus we may assume that
\begin{align}\label{ineq-lem2.7-3}
    |A_1|+|A_2|+\cdots +|A_r|\leq (r-1)s+t-2.
\end{align}
There are two possibilities. Either $|A_1\cap A_2\cap \ldots\cap A_r|\leq t-2$. In this case fix $x\in [s]\setminus (A_1\cap A_2\cap \ldots \cap A_r)$ arbitrarily and assume by symmetry $x\notin A_r$. Or there exists $x\in [s]$ which is  contained in less than $r-1$ of $A_1,A_2,\ldots,A_r$. Again by symmetry assume $x\notin A_r$. Now setting $A_i'=A_i$ for $1\leq i< r$ and $A_r'=A_r\cup \{x\}$, in either case $|A_1'\cap A_2'\cap \ldots\cap A_r'|<t$ is still valid.

In view of the $r$-wise $t$-intersecting property, $B_1\cap B_2\cap \ldots \cap B_r\neq \emptyset$. Let $y$ be in the intersection and define $B_i'=B_i$ for $1\leq i<r$, $B_r'=B_r\setminus\{y\}$. Note that $A_i'\cup B_i'=A_i\cup B_i\in \hf$ for $1\leq i<r$. Since  $A_r'\cup B_r'\prec A_r\cup B_r$,  by shiftedness  $A_r'\cup B_r'\in \hf$. As
\begin{align*}
|B_1'\cap B_2'\cap \ldots\cap B_r'| =|B_1\cap B_2\cap \ldots\cap B_r| -1&<(r-1)s+t-|A_1|-|A_2|-\cdots-|A_r|-1\\[3pt]
&=(r-1)s+t-|A_1'|-|A_2'|-\cdots-|A_r'|,
\end{align*}
$A_i'\cup B_i'$, $1\leq i\leq r$ form a counter-example with $|A_1'|+|A_2'|+\cdots+|A_r'|=|A_1|+|A_2|+\cdots+|A_r|+1$, which contradicts our assumption.
\end{proof}

\section{Some structural lemmas}

For $\hf\subset \binom{[n]}{k}$, $i\in [t+3]$ and $T\subset \binom{[t+3]}{t+3-i}$, define
\[
\hf_i =\{F\in \hf\colon |F\cap [t+3]|=t+3-i\} \mbox{ and } \tilde{\hf}_i(T) =\{E\subset [t+4,n]\colon T\cup E\in \hf_i\}.
\]
Clearly $\hf=\hf_0\cup \hf_1\cup \ldots\cup \hf_{t+3}$.

\begin{lemma}\label{lem-3.1}
    Suppose that $\hf\subset \binom{[n]}{k}$ is a shifted non-trivial 3-wise $t$-intersecting family. Then
    \begin{itemize}
        \item[(i)] For $4\leq i\leq t+3$ and any $T\in \binom{[t+3]}{t+3-i}$, $\tilde{\hf}_i(T)$ is 3-wise $(3i-3)$-intersecting.
        \item[(ii)] For any $T\in \binom{[t+3]}{t}$, $\thf_3(T)$ is  2-wise 4-intersecting  and $\thf_3(T)$ is  3-wise 6-intersecting if $T\neq [t]$. Moreover, if $\hf$ is 2-wise $(t+j)$-intersecting for some $j\geq 1$, then $\thf_3(T)$ is  2-wise $(3+j)$-intersecting.
        \item[(iii)]  For any $T\in \binom{[t+3]}{t+1}\setminus \{[t+1]\}$, $\thf_{2}(T)$ is 2-wise 2-intersecting and $\thf_{2}(T)$ is 3-wise 3-intersecting if $[t]\not\subset T$. Moreover, if $\hf$ is 2-wise $(t+j)$-intersecting  for some $j\geq 1$,  then $\thf_2(T)$ is  2-wise $(j+1)$-intersecting for any $T\in \binom{[t+3]}{t+1}\setminus \{[t+1]\}$.
    \end{itemize}
\end{lemma}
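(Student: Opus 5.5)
The whole lemma should follow from Lemma \ref{lem-2.7} with $s=t+3$. We apply it either with $r=3$ and intersection parameter $t$, or with $r=2$ and intersection parameter $t+1$ (respectively $t+j$). The second choice is legitimate because, by Fact \ref{fact-key}, $\hf$ is 2-wise $(t+1)$-intersecting, and it is shifted. The one extra ingredient is a shiftedness remark. Suppose $T\cup E\in\hf$ with $E\subset[t+4,n]$, and $T'$ is obtained from $T$ by replacing some $x\in T$ with a smaller element $y\in[t+3]\setminus T$. Then $T'\cup E\prec T\cup E$, so $T'\cup E\in\hf$. Hence $\thf_i(T)\subset\thf_i(T')$. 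This lets us take all the $B$'s from $\thf_i(T)$ while choosing several different $A$'s.

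For (i), take $A_1=A_2=A_3=T$ with $|T|=t+3-i\le t-1<t$, and $B_1,B_2,B_3\in\thf_i(T)$ arbitrary. Lemma \ref{lem-2.7} with $r=3$, $s=t+3$ gives
\[
|B_1\cap B_2\cap B_3|\ge 2(t+3)+t-3(t+3-i)=3i-3 .
\]

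For (ii), $|T|=t$. For the 2-wise statement take $A_1=A_2=T$. Since $|T|=t<t+1$, Lemma \ref{lem-2.7} with $r=2$ and parameter $t+1$ gives $|B_1\cap B_2|\ge (t+3)+(t+1)-2t=4$. With parameter $t+j$ the same computation gives $3+j$.

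For the 3-wise statement, assume $T\ne[t]$. Then there are $y\in[t]\setminus T$ and $x\in T\setminus[t]$, so $x>y$. Put $T'=(T\setminus\{x\})\cup\{y\}$; by the remark, $B_3\in\thf_3(T)\subset\thf_3(T')$. Now take $A_1=A_2=T$ and $A_3=T'$, which satisfy $|A_1\cap A_2\cap A_3|=t-1<t$. Lemma \ref{lem-2.7} gives $2(t+3)+t-3t=6$.

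For (iii), $|T|=t+1$ and $T\ne[t+1]$. Pick $y\in[t+1]\setminus T$ and $x\in T\setminus[t+1]$, so $x>y$. Set $T'=(T\setminus\{x\})\cup\{y\}$, so that $|T\cap T'|=t<t+1$. The 2-wise version of Lemma \ref{lem-2.7} with parameter $t+1$ gives $|B_1\cap B_2|\ge (t+3)+(t+1)-2(t+1)=2$. With parameter $t+j$ it gives $j+1$.

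For the 3-wise claim, assume $[t]\not\subset T$. Then $|T\cap[t]|\le t-1$, so $T$ has two elements $x_1<x_2$ in $\{t+1,t+2,t+3\}$, and there is some $y\in[t]\setminus T$. Define $T'=(T\setminus\{x_1\})\cup\{y\}$ and $T''=(T\setminus\{x_2\})\cup\{y\}$. Both precede $T$, and
\[
T\cap T'\cap T''=T\setminus\{x_1,x_2\}
\]
has size $t-1<t$. Lemma \ref{lem-2.7} with $r=3$ then gives $2(t+3)+t-3(t+1)=3$.

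The only delicate point is choosing the auxiliary sets $T',T''$ so that they precede $T$ in the shifting order while lowering the intersection of the $A_i$'s below the threshold. This is exactly where the hypotheses $T\ne[t]$, $T\ne[t+1]$ and $[t]\not\subset T$ are used.
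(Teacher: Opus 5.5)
Your proof is correct and follows the paper's argument. Every claim comes from Lemma \ref{lem-2.7} with $s=t+3$ (with $r=3$ and parameter $t$, or with $r=2$ and parameter $t+1$, resp.\ $t+j$), and shiftedness is used to pair $B_i\in\thf_i(T)$ with different $A_i$'s. The only difference is cosmetic: the paper uses fixed initial sets that precede every admissible $T$ (such as $[t]$, $[t-1]\cup\{t+1\}$, $[t]\cup\{t+2\}$, $[t+2]\setminus\{t+1\}$), whereas you build $T',T''$ from $T$ by a single downward replacement, which works equally well.
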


\begin{proof}
 Let us show first (i). Let $A_1=A_2=A_3=T$. Then $|A_1\cap A_2\cap A_3|=|T|=t+3-i<t$. Then for any $B_1,B_2,B_3\in \tilde{\hf}_i(T)$,  by Lemma \ref{lem-2.7} we have
 \[
 |B_1\cap B_2\cap B_3| \geq 2(t+3)+t-3|T| =3t+6-3(t+3-i) =3i-3.
 \]
 Thus $\tilde{\hf}_i(T)$ is 3-wise $(3i-3)$-intersecting.

Secondly we show (ii). By Fact \ref{fact-key}, $\hf$ is 2-wise $(t+1)$-intersecting. Let $A_1=A_2=[t]$.  Then $|A_1\cap A_2|=t<t+1$.  For any $B_1,B_2\in \tilde{\hf}_3([t])$,  applying Lemma \ref{lem-2.7} with $(s,r,t)=(t+3,2,t+1)$ we have
\[
|B_1\cap B_2|\geq (t+3)+(t+1)-2t =4.
\]
Thus $\tilde{\hf}_3([t])$ is 2-wise 4-intersecting. Similarly, if $\hf$ is 2-wise $(t+j)$-intersecting for some $j\geq 1$, then $\thf_3([t])$ is  2-wise $(3+j)$-intersecting. By shiftedness,  $\tilde{\hf}_3(T)\subset \tilde{\hf}_3([t])$ for any $T\in \binom{[t+3]}{t}$. Thus $\thf_3(T)$ is  2-wise $(3+j)$-intersecting for any $T\in \binom{[t+3]}{t}$.

For any $T\in \binom{[t+3]}{t}\setminus \{[t]\}$, let $A_1=A_2=[t]$, $A_3=[t-1]\cup \{t+1\}$.  Then $|A_1\cap A_2\cap A_3|=t-1<t$.  For any $B_1,B_2,B_3\in \tilde{\hf}_3(T)$, by shiftedness $A_1\cup B_1, A_2\cup B_2, A_3\cup B_3\in \hf$.
Applying Lemma \ref{lem-2.7} with $(s,r,t)=(t+3,3,t)$  we have
\[
|B_1\cap B_2\cap B_3| \geq 2(t+3)+t-3t =6.
\]
Thus $\tilde{\hf}_3(T)$ is  3-wise 6-intersecting.

Thirdly we show (iii). Let $T\in \binom{[t+3]}{t+1}\setminus \{[t+1]\}$ and let $A_1=[t+1]$, $A_2=[t]\cup \{t+2\}$. Then for any $B_1,B_2\in \tilde{\hf}_2(T)$,  by shiftedness $A_1\cup B_1$, $A_2\cup B_2\in \hf$. Applying Lemma \ref{lem-2.7} with $(s,r,t)=(t+3,2,t+1)$  we have
\[
|B_1\cap B_2| \geq (t+3)+(t+1)-2(t+1) =2.
\]
Thus $\tilde{\hf}_{2}(T)$ is 2-wise 2-intersecting. Similarly, if $\hf$ is 2-wise $(t+j)$-intersecting for some $j\geq 1$,   then $\thf_3(T)$ is  2-wise $(j+1)$-intersecting.

Let $T\in \binom{[t+3]}{t+1}$ with $[t]\not\subset T$ and let $A_1=[t+1]$, $A_2=[t+2]\setminus \{t+1\}$, $A_3=[t+2]\setminus \{t\}$. Then $|A_1\cap A_2\cap A_3|<t$. For any $B_1,B_2,B_3\in \tilde{\hf}_{2}(T)$, by shiftedness $A_i\cup B_i\in \hf$, $i=1,2,3$. Applying Lemma \ref{lem-2.7} with $(s,r,t)=(t+3,3,t)$  we have
\[
|B_1\cap B_2\cap B_3| \geq 2(t+3)+t-3(t+1) =3.
\]
Thus $\tilde{\hf}_{2}(T)$ is 3-wise 3-intersecting.
\end{proof}

\begin{lemma}\label{lem-3.2}
    Suppose that $\hf\subset \binom{[n]}{k}$ is a shifted non-trivial 3-wise $t$-intersecting family. Then
    \begin{itemize}
        \item[(i)]  For $i=4,\ldots,t+3$ and for any $T\in \binom{[t+3]}{t+3-i}$, $\thf_2([t+1]), \thf_i(T)$ are cross $(2i-1)$-intersecting.
        \item[(ii)]  For any $T\in \binom{[t+3]}{t}$, $\thf_{2}([t+1])$, $\thf_3(T)$ are cross 3-intersecting if $T=[t]$ and $\thf_{2}([t+1]), \thf_3(T)$ are cross $5$-intersecting if $T\neq [t]$.
        \item[(iii)] For any $T\in \binom{[t+3]}{t+1}$, $\thf_{2}([t+1])$, $\thf_{2}(T)$ are cross 2-intersecting if $T\cap [t+1]=[t]$ and  $\thf_{2}([t+1]),\thf_{2}(T)$ are cross 3-intersecting if $[t]\not\subset T$.
        \item[(iv)] For any $j\in [t+1]$, $\thf_{2}([t+1])$, $\thf_1([t+3]\setminus \{j\})$ are cross-intersecting.
    \end{itemize}
\end{lemma}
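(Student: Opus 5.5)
The plan is to derive every part from Lemma \ref{lem-2.7} with $s=t+3$, in one of two versions:
\begin{itemize}
\item the $3$-wise $t$-intersecting version ($r=3$);
\item the $2$-wise $(t+1)$-intersecting version ($r=2$, $t$ replaced by $t+1$), which is legitimate by Fact \ref{fact-key} since shiftedness is preserved.
\end{itemize}
Throughout, $B_1\in\thf_2([t+1])$ comes with $A_1=[t+1]$, and $B\in\thf_i(T)$ is the set from the other family. Whenever I use $r=3$, I take $B_2=B_3=B$. Then $|B_1\cap B|\ge |B_1\cap B_2\cap B_3|$, so a lower bound from \eqref{ineq-lem2.7-1} is automatically a cross-intersection bound. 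The sets $A_2,A_3$ will be $T$ itself or $(t+1)$- or $t$-sets $A\prec T$ inside $[t+3]$. For the latter, shiftedness gives $A\cup B\in\hf$, hence $B\in\hf(A,[t+3])$.

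For (i), take $A_2=A_3=T$ with $|T|=t+3-i\le t-1$. Then $|A_1\cap A_2\cap A_3|<t$, and \eqref{ineq-lem2.7-1} gives
\[
2(t+3)+t-(t+1)-2(t+3-i)=2i-1 .
\]
For (ii) with $T=[t]$, the triple intersection has size $t$, so I switch to $r=2$ with intersection parameter $t+1$, using $A_2=[t]$. This gives $(t+3)+(t+1)-(t+1)-t=3$. For $T\ne[t]$, note that $[t-1]\cup\{t+1\}\prec T$. Take $A_2=[t]$ and $A_3=[t-1]\cup\{t+1\}$ (both precede $T$), so the triple intersection is $[t-1]$. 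The bound becomes $2(t+3)+t-(t+1)-2t=5$.

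For (iii) with $T\cap[t+1]=[t]$, use $r=2$ with $A_2=T$. We have $|A_1\cap T|=t<t+1$, giving $(t+3)+(t+1)-2(t+1)=2$. For (iv), the $3$-wise choice runs into a size-$t$ intersection when $j=t+1$. So I again use $r=2$ with $A_2=[t+3]\setminus\{j\}$, for which $|A_1\cap A_2|=t$. This gives $(t+3)+(t+1)-(t+1)-(t+2)=1$, i.e.\ cross-intersecting, uniformly in $j$.

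The main obstacle is (iii) in the case $[t]\not\subset T$. There I want two $(t+1)$-sets $A_2,A_3\prec T$ with $|[t+1]\cap A_2\cap A_3|\le t-1$; this would give $2(t+3)+t-3(t+1)=3$. Let $j\le t$ be missing from $T$. Since removing a larger element produces an earlier set in $\prec$, we have
\[
[t+2]\setminus\{t\}\prec[t+2]\setminus\{t-1\}\prec[t+2]\setminus\{j\}\preceq T \quad\text{for } j\le t-1,
\]
and $A_2=[t+2]\setminus\{t\}$, $A_3=[t+2]\setminus\{t-1\}$ work. The delicate subcase is $j=t$ with $t-1\in T$. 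Then only sets missing $t$ are guaranteed to precede $T$, and the naive choice yields a triple intersection of size exactly $t$.

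For that subcase, the first thing I would try is $A_2=T$ and $A_3=[t+2]\setminus\{t\}$, with $B_3$ shifted further if possible (for instance, replacing one of its elements by a free element of $[t+3]$). If that fails, I would exploit the extra room in $[t+3]\setminus[t+1]$: use $A_3=[t-2]\cup\{t-1\}\cup\{t+1,t+2\}$-type sets that still precede $T$, adjusting $A_3$ so that one element of $[t-1]$ is dropped, and recheck the count in \eqref{ineq-lem2.7-1}.
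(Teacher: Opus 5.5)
You have a genuine gap in one subcase of (iii); everything else is fine. Like the paper, you apply Lemma \ref{lem-2.7} with $s=t+3$, either with $r=3$ (taking $B_2=B_3$) or with $r=2$ and parameter $t+1$. Your choices for (i), (ii) and (iv) give the stated bounds. So do your choices for the case $T\cap[t+1]=[t]$ of (iii), and for the case of (iii) where $T$ misses some $j\le t-1$.

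The open subcase is $T\cap[t]=[t-1]$, i.e.\ $T=[t+3]\setminus\{t,j'\}$ with $j'\in\{t+1,t+2,t+3\}$. You leave it unresolved because you claim that only sets missing $t$ are guaranteed to precede $T$, and that claim is false. For $(t+1)$-subsets of $[t+3]$ with $a<b$ and $c<d$, one has $[t+3]\setminus\{a,b\}\prec[t+3]\setminus\{c,d\}$ if and only if $c\le a$ and $d\le b$. Hence $[t]\cup\{t+2\}=[t+3]\setminus\{t+1,t+3\}$ precedes all three such $T$. For example, $[t]\cup\{t+2\}\prec[t-1]\cup\{t+1,t+2\}$, since $t\le t+1$ in the $t$-th position.

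Your fallbacks do not close the case either:
\begin{itemize}
\item With $A_2=T$ and $A_3=[t+2]\setminus\{t\}$, the set $[t+1]\cap A_2\cap A_3$ equals $[t-1]\cup\{t+1\}$, of size $t$, whenever $j'\in\{t+2,t+3\}$.
\item No $A_3\prec T$ can drop an element of $[t-1]$, because $T\supset[t-1]$ forces every set preceding $T$ to contain $[t-1]$.
\item The suggestion to shift $B_3$ further is not specific enough to check.
\end{itemize}

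The missing idea is to pair a set that omits $t$ with one that omits $t+1$. Take $A_2=[t-1]\cup\{t+1,t+2\}=[t+3]\setminus\{t,t+3\}$ and $A_3=[t]\cup\{t+2\}=[t+3]\setminus\{t+1,t+3\}$. Every $T\in\binom{[t+3]}{t+1}$ with $[t]\not\subset T$ has its smaller missing element at most $t$, so by the criterion above both $A_2$ and $A_3$ precede $T$. Therefore any $B\in\thf_2(T)$ lies in $\hf(A_2,[t+3])\cap\hf(A_3,[t+3])$, and $[t+1]\cap A_2\cap A_3=[t-1]$. Lemma \ref{lem-2.7} with $r=3$ then gives $2(t+3)+t-3(t+1)=3$ for every such $T$. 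This is exactly the paper's choice, and it makes your case split on the missing element $j$ unnecessary.
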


\begin{proof}
Let us first show (i). Let $A_1=[t+1]$ and $A_2=A_3=T$. Then $|A_1\cap A_2\cap A_3|\leq t+3-i<t$. Then for $B_1\in \thf_2([t+1])$ and $B_2\in \thf_i(T)$, by Lemma \ref{lem-2.7}  we have
\[
|B_1\cap B_2|=|B_1\cap B_2\cap B_2| \geq 2(t+3)+t-|A_1|-|A_2|-|A_3| =3t+6-(t+1)-2(t+3-i) = 2i-1.
\]
Thus $\thf_2([t+1]), \thf_i(T)$ are cross $(2i-1)$-intersecting.

Next we show (ii). Let $A_1=[t+1]$ and $A_2=[t]$. Then $|A_1\cap A_2|= t<t+1$. For $B_1\in \thf_2([t+1])$ and $B_2\in \thf_2([t])$, applying Lemma \ref{lem-2.7} with $(s,r,t)=(t+3,2,t+1)$  we have
\[
|B_1\cap B_2| \geq (t+3)+(t+1)-|A_1|-|A_2| =2t+4-(t+1)-t = 3.
\]
Thus $\thf_{2}([t+1])$, $\thf_3([t])$ are cross 3-intersecting.

Let $T\in \binom{[t+3]}{t}$ with $T\neq [t]$ and let $A_1=[t+1]$, $A_2=[t]$, $A_3=[t-1]\cup \{t+1\}$. Then $|A_1\cap A_2\cap A_3| =t-1<t$.  For $B_1\in \thf_2([t+1])$ and $B_2\in \thf_2(T)$, by shiftedness $A_1\cup B_1,A_2\cup B_2, A_3\cup B_2\in \hf$.   Applying Lemma \ref{lem-2.7} with $(s,r,t)=(t+3,3,t)$  we have
\[
|B_1\cap B_2| =|B_1\cap B_2\cap B_2| \geq 2(t+3)+t-|A_1|-|A_2|-|A_3| =3t+6-(t+1)-2t = 5.
\]
Thus $\thf_{2}([t+1])$, $\thf_3(T)$ are cross 5-intersecting.

Thirdly, we show (iii). Let $T\in \binom{[t+3]}{t+1}$. If $T\cap [t+1]=[t]$, then let $A_1=[t+1]$, $A_2=[t]\cup \{t+2\}$. Clearly $|A_1\cap A_2|=t<t+1$. For any $B_1\in \thf_2([t+1])$ and $B_2\in \thf_2(T)\subset \thf_2(A_2)$, by applying Lemma \ref{lem-2.7} with $(s,r,t)=(t+3,2,t+1)$ we have
\[
|B_1\cap B_2| \geq (t+3)+(t+1)-|A_1|-|A_2| = 2.
\]
Thus $\thf_{2}([t+1])$, $\thf_{2}(T)$ are cross 2-intersecting.

If $[t]\not\subset T$, then let $A_1=[t+1]$, $A_2=[t-1]\cup \{t+1,t+2\}$, $A_3=[t]\cup \{t+2\}$. Clearly $|A_1\cap A_2\cap A_3|=t-1<t$. For any $B_1\in \thf_2([t+1])$ and $B_2\in \thf_2(T)\subset \thf_2(A_2)\cap \thf_2(A_3)$, by applying Lemma \ref{lem-2.7} with $(s,r,t)=(t+3,3,t)$ we have
\[
|B_1\cap B_2\cap B_3| \geq 2(t+3)+t-|A_1|-|A_2|-|A_3| =3.
\]
Thus $\thf_{2}([t+1])$, $\thf_{2}(T)$ are cross 3-intersecting.

Finally we show (iv). Since $\hf$ is 2-wise $(t+1)$-intersecting, $\thf_{2}([t+1])$, $\thf_1([t+3]\setminus \{j\})$ are cross-intersecting for $j\in [t+1]$. Thus (iv) holds.
\end{proof}

\section{Proof of Theorem \ref{thm-main}}
\begin{lemma}
 Suppose that $0<\alpha<1$ and $1\leq \ell<k-t\leq \alpha(n-k)$. Then
    \begin{align}\label{ineq-3.0}
    \binom{n-t-3}{k-t-\ell-1} \leq \alpha\binom{n-t-3}{k-t-\ell}.
    \end{align}
\end{lemma}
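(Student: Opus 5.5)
The proof is a direct computation with the ratio of consecutive binomial coefficients. Set $N=n-t-3$ and $m=k-t-\ell$, so that $m\ge 1$ since $\ell<k-t$. Then
\[
\frac{\binom{N}{m-1}}{\binom{N}{m}}=\frac{m}{N-m+1}=\frac{k-t-\ell}{n-k+\ell-2},
\]
and \eqref{ineq-3.0} is equivalent to
\[
k-t-\ell\le \alpha\,(n-k+\ell-2),
\]
provided $\binom{N}{m}>0$. If instead $\binom{N}{m}=0$, i.e. $m>N$, then $\binom{N}{m-1}$ is also $0$ unless $m-1=N$, so that degenerate range needs a separate check; this is addressed below.

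The key step is to deduce the displayed linear inequality from the hypotheses. We have $k-t-\ell\le k-t-1$. On the other side, $\alpha(n-k+\ell-2)\ge \alpha(n-k)+\alpha(\ell-2)\ge (k-t)-\alpha$, using $k-t\le\alpha(n-k)$ and $\ell\ge 1$ (so $\ell-2\ge -1$). Since $\alpha<1$, we get $(k-t)-\alpha> k-t-1\ge k-t-\ell$, which is the required inequality. In other words, the hypothesis $\ell\ge1$ supplies a slack of $1$ on the left, which absorbs the loss of $2\alpha$ on the right caused by the shift from $n-t-3$ down to $n-k+\ell-2$, with room to spare.

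It remains to confirm that the binomial coefficients are genuine, i.e. that $N\ge m$. We have $n-k\ge (k-t)/\alpha>k-t\ge 2$ because $\ell<k-t$ forces $k-t\ge 2$. Hence
\[
N-m=n-k+\ell-3\ge n-k-2\ge 1,
\]
using $n-k\ge 3$, which holds since $n-k$ is an integer strictly greater than $k-t\ge 2$. Then $N-m+1>0$, the ratio formula is valid, and $\binom{N}{m}>0$, so no degenerate case occurs.

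There is no real obstacle here. The only care needed is the integrality and positivity bookkeeping just done, ensuring that $N-m+1>0$ so that dividing by it is legitimate.
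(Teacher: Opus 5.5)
Your proof is correct and follows essentially the same route as the paper: both compute the ratio $\binom{n-t-3}{k-t-\ell}/\binom{n-t-3}{k-t-\ell-1}=\frac{n-k+\ell-2}{k-t-\ell}$ and compare it with $1/\alpha$. The paper passes through the intermediate bound $\frac{n-k+\ell-2}{k-t-\ell}>\frac{n-k}{k-t}$, which for $\ell=1$ silently uses $n-k>k-t$ (i.e.\ $\alpha<1$); you instead verify $k-t-\ell\le\alpha(n-k+\ell-2)$ directly, and you also check that the denominators are positive, which the paper leaves implicit.
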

\begin{proof}
Note that
\begin{align*}
\frac{\binom{n-t-3}{k-t-\ell}}{\binom{n-t-3}{k-t-\ell-1}} =  \frac{n-k+\ell-2}{k-t-\ell}> \frac{n-k}{k-t} \geq \frac{1}{\alpha}.
\end{align*}
Thus \eqref{ineq-3.0} follows.
\end{proof}

Recall an easy statement about regular bipartite graphs.

\begin{proposition}
Let $\hk$ be a bi-regular bipartite graph with partite sets $A$, $B$ and $\beta>0$ an arbitrary constant. Then for an arbitrary independent subset $A_0\cup B_0$ with $A_0\subset A$, $B_0\subset B$,
\begin{align}\label{ineq-prop4.2}
|A_0|+\beta|B_0|\leq \max\{|A|,\beta|B|\}.
\end{align}
\end{proposition}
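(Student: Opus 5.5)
We prove the proposition by double counting edges of $\hk$. Since $\hk$ is bi-regular, every vertex of $A$ has the same degree $d_A$ and every vertex of $B$ has the same degree $d_B$. If $\hk$ has no edges the inequality is trivial, so assume $d_A,d_B>0$ (otherwise at least one side has degree $0$, and a short separate check handles it). Counting the edges of $\hk$ in two ways gives $e(\hk)=|A|d_A=|B|d_B$. Hence $|B|/|A| = d_A/d_B$.

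Next we use independence. Since $A_0\cup B_0$ is independent, no vertex of $B_0$ is adjacent to any vertex of $A_0$. So the $|B_0|d_B$ edges incident to $B_0$ all go to $A\setminus A_0$. The $|A\setminus A_0|$ vertices there carry at most $(|A|-|A_0|)d_A$ edges in total. Therefore
\[
|B_0|\,d_B\leq (|A|-|A_0|)\,d_A,\qquad\mbox{i.e.}\qquad \frac{|A_0|}{|A|}+\frac{|B_0|}{|B|}\leq 1,
\]
where the second form uses $d_A/d_B=|B|/|A|$.

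Finally we optimise. Set $x=|A_0|/|A|$ and $y=|B_0|/|B|$, so $x,y\geq 0$ and $x+y\leq 1$. Then
\[
|A_0|+\beta|B_0| = x|A|+y\,\beta|B| \leq (x+y)\max\{|A|,\beta|B|\}\leq \max\{|A|,\beta|B|\},
\]
which is \eqref{ineq-prop4.2}.

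The only point needing care is the degenerate case where some degree is zero. If $d_A=0$ then $d_B=0$ as well, unless $B=\emptyset$, by the edge count. So $\hk$ has no edges and $A_0=A$, $B_0=B$ is allowed. We must then interpret the statement so that it still holds; in the paper's intended applications both degrees are positive. Beyond this, the argument is just the fractional version of "an independent set in a regular bipartite graph has at most half the vertices," and I expect no real obstacle.
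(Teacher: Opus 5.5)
The paper gives no proof of this proposition; it calls it an easy statement and uses it directly. Your double-counting argument is the standard proof and is correct whenever $\hk$ has at least one edge. It goes from $|A|d_A=|B|d_B$, through independence forcing $|B_0|d_B\leq(|A|-|A_0|)d_A$, i.e.\ $|A_0|/|A|+|B_0|/|B|\leq 1$, to the final convex-combination bound against $\max\{|A|,\beta|B|\}$.

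One correction concerns the degenerate case. Your opening claim that the edgeless case is ``trivial'' is false. If $A,B\neq\emptyset$ and $\hk$ has no edges, then $A_0=A$, $B_0=B$ is independent and $|A|+\beta|B|>\max\{|A|,\beta|B|\}$, so the statement itself fails there. Your last paragraph half-notices this, but it contradicts your first paragraph rather than resolving it.

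The clean fix is to say explicitly that the proposition tacitly assumes $\hk$ has an edge. The cases $A=\emptyset$ or $B=\emptyset$ are genuinely trivial, and otherwise bi-regularity with one edge gives $d_A,d_B>0$. This hypothesis holds in the paper's only application, to the bipartite Kneser graph on $\binom{\{x_3,\ldots,x_m\}}{s}$ and $\binom{\{x_3,\ldots,x_m\}}{s-1}$. There disjoint pairs exist because $m-2\geq 2s-1$ follows from $m>2s$.
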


\begin{proposition}
Let $m,s$ be integers with $m>2s\geq 2$, $X$ an $m$-element set, $\hp\subset \binom{X}{s}$, $\hr\subset \binom{X}{s+1}$. Suppose that $\hp$ and $\hr$ are cross-intersecting and $|\hp|\geq \binom{m-1}{s-1}+\binom{m-2}{s-1}$. Then for any $\beta>0$,
\begin{align}\label{ineq-prop4.3}
|\hp|+\beta|\hr|\leq \max\left\{\binom{m}{s},\binom{m-1}{s-1}+\binom{m-2}{s-1}+\beta\binom{m-2}{s-1}\right\}.
\end{align}
\end{proposition}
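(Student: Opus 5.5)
The plan is to compress both families to lexicographic initial segments, show that this forces $\hr$ to contain the pair $\{1,2\}$, and then finish with the bi-regular bipartite graph bound \eqref{ineq-prop4.2} on $X\setminus\{1,2\}$.

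\textbf{Step 1: pass to initial segments.} Identify $X$ with $[m]$. Since $m>2s$ gives $m\geq s+(s+1)$, Hilton's lemma applies with $a=s$, $b=s+1$. Hence $\hl(m,s,|\hp|)$ and $\hl(m,s+1,|\hr|)$ are cross-intersecting. They have the same sizes as $\hp$ and $\hr$, so the left-hand side of \eqref{ineq-prop4.3} does not change, and we may assume $\hp$ and $\hr$ are these initial segments.

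\textbf{Step 2: $\hp$ contains every $s$-set meeting $\{1,2\}$.} In the lexicographic order on $\binom{[m]}{s}$, the first $\binom{m-1}{s-1}$ sets are those containing $1$. The next $\binom{m-2}{s-1}$ sets are those containing $2$ but not $1$. The hypothesis $|\hp|\geq \binom{m-1}{s-1}+\binom{m-2}{s-1}$ therefore gives
\[
\hp=\hp_0\cup \hp', \qquad \hp_0=\left\{P\in\binom{[m]}{s}\colon P\cap\{1,2\}\neq\emptyset\right\},\qquad \hp'\subset \binom{[3,m]}{s}.
\]

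\textbf{Step 3: every set of $\hr$ contains $\{1,2\}$.} Let $R\in\hr$ and suppose $1\notin R$. Since $m-(s+1)\geq s\geq s-1$, there is an $(s-1)$-set $Q\subset [2,m]\setminus R$. Then $\{1\}\cup Q\in \hp_0$ is disjoint from $R$, contradicting cross-intersection. The same argument with the roles of $1$ and $2$ exchanged shows $2\in R$. So $\hr'=\{R\setminus\{1,2\}\colon R\in\hr\}\subset\binom{[3,m]}{s-1}$, and $|\hr'|=|\hr|$. Moreover, $\hp'$ and $\hr'$ are cross-intersecting, because the sets of $\hp'$ avoid $1$ and $2$.

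\textbf{Step 4: apply the bipartite bound.} Consider the bipartite graph $\hk$ with parts $A=\binom{[3,m]}{s}$ and $B=\binom{[3,m]}{s-1}$, where two sets are adjacent exactly when they are disjoint. It is bi-regular: the degrees are $\binom{m-2-s}{s-1}$ on $A$ and $\binom{m-2-s+1}{s}$ on $B$. Both degrees are positive since $m-2\geq 2s-1$. Because $\hp'$ and $\hr'$ are cross-intersecting, $\hp'\cup\hr'$ is an independent set of $\hk$. By \eqref{ineq-prop4.2},
\[
|\hp'|+\beta|\hr'|\leq \max\left\{\binom{m-2}{s},\beta\binom{m-2}{s-1}\right\}.
\]

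\textbf{Step 5: combine.} We have $|\hp_0|=\binom{m-1}{s-1}+\binom{m-2}{s-1}$, and Pascal's rule gives $\binom{m-1}{s-1}+\binom{m-2}{s-1}+\binom{m-2}{s}=\binom{m}{s}$. Adding $|\hp_0|$ to the bound of Step 4 yields exactly the right-hand side of \eqref{ineq-prop4.3}.

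The only delicate point is Step 2, namely that the lexicographic initial segment of length at least $\binom{m-1}{s-1}+\binom{m-2}{s-1}$ really contains all $s$-sets through $1$ or $2$. This is a direct check of the lexicographic order. The rest follows from Hilton's lemma and the bipartite proposition stated earlier.
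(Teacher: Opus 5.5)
Your proposal is correct and follows essentially the same route as the paper's proof. Both arguments use Hilton's lemma to pass to lexicographic initial segments, observe that $\hp$ then contains every $s$-set meeting $\{1,2\}$ and hence every $R\in\hr$ contains $\{1,2\}$, and finish with the bi-regular bound \eqref{ineq-prop4.2} on the bipartite Kneser graph over $X\setminus\{1,2\}$. You also spell out two details the paper leaves implicit, namely the containment $\{1,2\}\subset R$ and the positivity of both degrees.
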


\begin{proof}
Let $X=\{x_1,x_2,\ldots,x_m\}$. In view of Hilton's Lemma we may assume that $\hp$ and $\hr$ are initial families in the lexicographic order. Hence,
\[
\hp =\left\{P\in \binom{X}{s}\colon P\cap \{x_1,x_2\} \neq \emptyset\right\} \cup \hp_0 \mbox{ with }\hp_0\subset \binom{\{x_3,\ldots,x_m\}}{s}.
\]
By the cross-intersecting property, $\{x_1,x_2\}\subset \hr$ for all $R\in \hr$ whence $|\hr|=|\hr(\{x_1,x_2\})|$.

Let $\hk$ be the bipartite Kneser graph on $\binom{\{x_3,\ldots,x_m\}}{s}$ and $\binom{\{x_3,\ldots,x_m\}}{s-1}$ with $P,R$ forming an edge iff $P\cap R=\emptyset$. By the cross-intersecting property, $\hp_0\cup \hr(\{x_1,x_2\})$ is an independent set in $\hk$. Now \eqref{ineq-prop4.2} implies \eqref{ineq-prop4.3}.
\end{proof}

We say that $\hf\subset \binom{[n]}{k}$ is {\it exactly 2-wise $t$-intersecting} if $\hf$ is  2-wise $t$-intersecting and there exist $F,F'\in \hf$ such that $|F\cap F'|=t$.

\begin{lemma}\label{lem-4.5}
Let  $\hf\subset \binom{[n]}{k}$ be a shifted and non-trivial $3$-wise $t$-intersecting family. If  $\hf$ is exactly 2-wise $(t+1)$-intersecting and $n\geq 3k-2t$, then
\[
|\hf| \leq \max\left\{\binom{n-t}{k-t}, |\ha(n,k,t+3)|\right\}.
\]
\end{lemma}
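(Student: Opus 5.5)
Since $\hf$ is shifted and exactly 2-wise $(t+1)$-intersecting, the two sets realizing $|F\cap F'|=t+1$ can be shifted into a canonical position. Concretely, I will use shiftedness to show that $[t+1]\cup\{t+2,t+4,\ldots\}$ and $[t+1]\cup\{t+3,t+5,\ldots\}$ (or a similar pair) lie in $\hf$, so that $\hf_2([t+1])$ is \emph{not} tiny. The plan is then to decompose
\[
\hf=\hf_0\cup\hf_1\cup\hf_2\cup\hf_3\cup\bigcup_{i\ge 4}\hf_i
\]
with respect to $[t+3]$ and bound each part using Lemmas \ref{lem-3.1} and \ref{lem-3.2}. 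The comparison target is $|\ha(n,k,t+3)|=(t+3)\binom{n-t-3}{k-t-2}+\binom{n-t-3}{k-t-3}$, whose parts match $\hf_0$ and $\hf_1$ exactly. The trivial bounds are $|\hf_0|\le\binom{n-t-3}{k-t-3}$ and $|\hf_1|\le\sum_j|\thf_1([t+3]\setminus\{j\})|$. The remaining work is to show that whatever $\hf_2,\hf_3,\ldots$ contribute is paid for by losses in $\hf_1$, or else that the whole family is dominated by $\binom{n-t}{k-t}$.

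The key pairing is Lemma \ref{lem-3.2}(iv): $\thf_2([t+1])$ and each $\thf_1([t+3]\setminus\{j\})$, $j\in[t+1]$, are cross-intersecting families of $(k-t-1)$- and $(k-t-2)$-sets on $[t+4,n]$, a ground set of $m=n-t-3$ elements. For each $j$ I will apply Proposition 4.3 (or Hilton's Lemma together with the bi-regular bipartite Kneser graph bound \eqref{ineq-prop4.2}) with $s=k-t-2$. This gives a weighted bound
\[
|\thf_1([t+3]\setminus\{j\})|+\beta|\thf_2([t+1])|\le\max\left\{\binom{m}{s},\ \ldots\right\},
\]
and choosing $\beta=1/(t+1)$ and summing over $j$ absorbs $|\hf_2([t+1])|$. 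The condition $m>2s$ is equivalent to $n-t-3>2(k-t-2)$, which follows from $n\ge 3k-2t$.

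The other parts of $\hf_2$, namely $\thf_2(T)$ for $T\ne[t+1]$, are 2-wise 2-intersecting or 3-wise 3-intersecting by Lemma \ref{lem-3.1}(iii), and cross 2- or 3-intersecting with $\thf_2([t+1])$ by Lemma \ref{lem-3.2}(iii). The parts $\hf_3$ and $\hf_i$ for $i\ge4$ are highly intersecting by Lemma \ref{lem-3.1}(i),(ii), and cross $(2i-1)$-intersecting with $\thf_2([t+1])$. Since $\thf_2([t+1])$ is nonempty (by exactness), each of these families is bounded by roughly $\binom{k-t-1}{\ell}\binom{n-t-3}{k-t-i-\ell}$-type quantities, which are $O(k^{c}\binom{n}{k-t-i})$. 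I will then use Lemma 4.1 with $\alpha=(k-t)/(n-k)\le 1/2$, available from $n\ge3k-2t$, to make the geometric series over $i$ and $\ell$ negligible compared with the slack $\binom{n-t-3}{k-t-2}$ terms.

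There are two branches. If $|\thf_2([t+1])|$ is small, I will instead compare with $\ha(n,k,t+3)$ directly, using Lemma \ref{lem-F20} on the highly intersecting pieces. If it is large, the cross-intersection forces each $\thf_1$ family to shrink (Hilton–Kneser), so the sum falls below $\binom{n-t}{k-t}$, as $\binom{n-t}{k-t}=\sum\binom{3}{i}\binom{n-t-3}{k-t-i}$. I expect the main obstacle to be the bookkeeping in the middle range of $|\thf_2([t+1])|$. There the bound \eqref{ineq-prop4.3} needs $|\hp|\ge\binom{m-1}{s-1}+\binom{m-2}{s-1}$, and I will have to handle separately the case where some $\thf_1$ is below this threshold, presumably by a cruder cross-intersecting (Hilton, Theorem \ref{thm-hilton}) estimate. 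I will also need to verify that the contributions of $\hf_2(T)$ with $T\supset[t]$, which are only 2-intersecting, fit inside the available slack.
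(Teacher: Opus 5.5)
\textbf{Gap.} Your plan never extracts the one consequence of exactness that the lemma rests on, and the estimates you sketch cannot close under the weak hypothesis $n\ge 3k-2t$.

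First, there is no slack left for the extra pieces. With $\beta=1/(t+1)$ and $m>2s$, the active alternative in \eqref{ineq-prop4.3} is $\binom{m}{s}$. Sum this over $j\in[t+1]$, then add the two remaining $\hf_1$-pieces ($j=t+2,t+3$) and $|\hf_0|\le\binom{n-t-3}{k-t-3}$. The resulting upper bound for $\hf_0\cup\hf_1$ together with the sets $[t+1]\cup E$, $E\in\thf_2([t+1])$, is already exactly $(t+3)\binom{n-t-3}{k-t-2}+\binom{n-t-3}{k-t-3}=|\ha(n,k,t+3)|$. Nothing is left for $\thf_2(T)$ with $T\neq[t+1]$, or for $\hf_3,\hf_4,\ldots$.

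Second, those pieces are not negligible. Each application of \eqref{ineq-3.0} gains only a factor $\alpha=(k-t)/(n-k)$, and here $\alpha$ can be as large as $1/2$. Meanwhile the number of pieces grows like $\binom{t+3}{i}$, and your single-set cross-intersection bounds carry factors like $\binom{k-t-1}{2i-1}$. Estimates of this type need $\alpha=O(1/\sqrt t)$. That is exactly why Lemma \ref{lem-key1} assumes $n\ge\frac{1}{\alpha}(k-t)+k$ with $\alpha=\sqrt{\theta/(t+3)}$. Such estimates cannot prove the present lemma for $n\ge 3k-2t$.

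\textbf{Missing idea.} Exactness plus shiftedness gives $G,H\in\hf$ with $G\cap H=[t+1]$. Applying the 3-wise $t$-intersecting property to $F,G,H$ then forces $|F\cap[t+1]|\ge t$ for every $F\in\hf$. So decompose with respect to $[t+1]$, not $[t+3]$. The family splits into $\hg_0=\hf([t+1],[t+1])$ and $\hg_i=\hf([t+1]\setminus\{i\},[t+1])$ for $i\in[t+1]$, all living on $[t+2,n]$.

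By 2-wise $(t+1)$-intersection, the $\hg_i$ with $i\ge 1$ are pairwise cross 2-intersecting, and $\hg_0,\hg_{t+1}$ are cross-intersecting. Theorem \ref{thm-lz} applies since $n-t-1\ge 3(k-t-1)$, and gives $\sum_{i\ge 1}|\hg_i|\le\max\{\binom{n-t-1}{k-t},(t+1)\binom{n-t-3}{k-t-2}\}$.

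In the first alternative you get $|\hf|\le\binom{n-t-1}{k-t}+\binom{n-t-1}{k-t-1}=\binom{n-t}{k-t}$ at once. In the second there are two cases:
\begin{itemize}
\item[(a)] If $|\hg_0|\le\binom{n-t-2}{k-t-2}+\binom{n-t-3}{k-t-2}$, adding up gives $|\hf|\le|\ha(n,k,t+3)|$ directly.
\item[(b)] Otherwise \eqref{ineq-prop4.3} applies to $(\hp,\hr)=(\hg_0,\hg_{t+1})$ with $\beta=t+1$. Shiftedness gives $\hg_1\subset\cdots\subset\hg_{t+1}$, so $|\hf|\le|\hg_0|+(t+1)|\hg_{t+1}|\le\max\{\binom{n-t-1}{k-t-1},|\ha(n,k,t+3)|\}$.
\end{itemize}

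Note that Proposition 4.3 is used here with the roles and the weight reversed relative to your plan. Its threshold hypothesis on $|\hp|$ is precisely the case split (a)/(b).
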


\begin{proof}
Set $\hg_i =\hf([t+1]\setminus \{i\},[t+1])$, $i=1,2,\ldots,t+1$ and set $\hg_0=\hf([t+1],[t+1])$. Then $\hg_1,\hg_2,\ldots,\hg_{t+1}$ are pairwise cross 2-intersecting. By Theorem \ref{thm-lz} and $n-t-1\geq 3(k-t-1)$,
\[
\sum_{1\leq i\leq t+1} |\hg_i| \leq \max\left\{\binom{n-t-1}{k-t}, (t+1)\binom{n-t-3}{k-t-2}\right\}.
\]
If $\binom{n-t-1}{k-t}\geq (t+1)\binom{n-t-3}{k-t-2}$, then
\[
|\hf| =\sum_{1\leq i\leq t+1} |\hg_i| +|\hg_0| \leq \binom{n-t-1}{k-t}+\binom{n-t-1}{k-t-1}=\binom{n-t}{k-t}.
\]
Thus we may assume $\binom{n-t-1}{k-t}< (t+1)\binom{n-t-3}{k-t-2}$.

If $|\hg_0|\leq \binom{n-t-2}{k-t-2}+\binom{n-t-3}{k-t-2}$, then
\[
|\hf| =\sum_{1\leq i\leq t+1} |\hg_i| +|\hg_0| \leq (t+1)\binom{n-t-3}{k-t-2}+\binom{n-t-2}{k-t-2}+\binom{n-t-3}{k-t-2}=|\ha(n,k,t+3)|.
\]
Thus we may assume that $|\hg_0|> \binom{n-t-2}{k-t-2}+\binom{n-t-3}{k-t-2}$.

By shiftedness, $\hg_1\subset \hg_2\subset \ldots\subset \hg_{t+1}$. Since $\hg_0$, $\hg_{t+1}$ are cross-intersecting, by \eqref{ineq-prop4.3}  we infer that
\begin{align*}
|\hf|&\leq |\hg_0|+(t+1)|\hg_{t+1}| \\[3pt]
&\leq  \max\left\{\binom{n-t-1}{k-t-1},\binom{n-t-2}{k-t-2}+\binom{n-t-3}{k-t-2} +(t+1)\binom{n-t-3}{k-t-2}\right\}\\[3pt]
&= \max\left\{\binom{n-t-1}{k-t-1},|\ha(n,k,t+3)|\right\}\\[3pt]
&\leq \max\left\{\binom{n-t}{k-t},|\ha(n,k,t+3)|\right\}.
\end{align*}
\end{proof}

\begin{fact}\label{fact-key1}
Let  $\hf\subset \binom{[n]}{k}$ be a shifted, non-trivial $3$-wise $t$-intersecting family. If $\hf\not\subset \mathcal{A}(n,k,t+3)$, then $\thf_1([t+3]\setminus \{i\})$ and $\thf_1([t+3]\setminus \{j\})$  are cross-intersecting for all $1\leq i<j\leq t+1$.
\end{fact}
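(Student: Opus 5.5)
The plan is to find a single set $G\in\hf$ whose trace on $[t+3]$ is exactly $[t+1]$, and to use it as the third set in the 3-wise $t$-intersecting condition. Since $\hf\not\subset\ha(n,k,t+3)$, there is some $F\in\hf$ with $|F\cap[t+3]|\leq t+1$. I will show, using shiftedness, that $G:=[t+1]\cup B$ lies in $\hf$ for some $B\subset[t+4,n]$ with $|B|=k-t-1$. Once $G$ is available, the statement follows from a one-line computation of a triple intersection.

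\emph{Construction of $G$.} Write $F\cap[t+3]=\{a_1<\dots<a_p\}$ with $p\leq t+1$, so $F$ has $k-p\geq k-t-1$ elements in $[t+4,n]$. If $p<t+1$, take the $t+1-p$ smallest elements of $F\setminus[t+3]$. Let $G$ consist of $[t+1]$ together with the remaining $k-t-1$ elements of $F\setminus[t+3]$. Then $G\cap[t+3]=[t+1]$.

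To see that $G\prec F$, compare the sorted elements coordinatewise. The first $t+1$ elements of $G$ are $1,\dots,t+1$, and each is at most the corresponding element of $F$, because the $\ell$-th smallest element of any set is at least $\ell$. The remaining $k-t-1$ elements of $G$ are the largest $k-t-1$ elements of $F$, which occupy the same positions in the sorted order of $F$. Hence $G\prec F$, and since $\hf$ is shifted, the earlier proposition on shifted families gives $G\in\hf$.

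\emph{Cross-intersection.} Fix $1\leq i<j\leq t+1$ and take $E_1\in\thf_1([t+3]\setminus\{i\})$ and $E_2\in\thf_1([t+3]\setminus\{j\})$. Then $F_1=([t+3]\setminus\{i\})\cup E_1$, $F_2=([t+3]\setminus\{j\})\cup E_2$ and $G$ all lie in $\hf$. Since $E_1,E_2,B\subset[t+4,n]$, their triple intersection is
\[
F_1\cap F_2\cap G=\bigl([t+1]\setminus\{i,j\}\bigr)\cup(E_1\cap E_2\cap B).
\]
The first part has exactly $t-1$ elements because $i,j\in[t+1]$. The 3-wise $t$-intersecting property forces $|F_1\cap F_2\cap G|\geq t$, so $E_1\cap E_2\cap B\neq\emptyset$, and in particular $E_1\cap E_2\neq\emptyset$. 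This is exactly the claimed cross-intersection.

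The only step needing care is the construction of $G$, specifically checking $G\prec F$ in the case $|F\cap[t+3]|<t+1$. The conclusion relies on $G\cap[t+3]=[t+1]$ exactly: if $G$ also contained $t+2$ or $t+3$, the triple intersection would already reach size $t$ and give no information. The rest is immediate.
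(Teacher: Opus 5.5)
Your proof is correct and follows the paper's argument: find a member of $\hf$ whose trace on $[t+3]$ is exactly $[t+1]$, then use it as the third set so that the triple intersection inside $[t+3]$ is only $[t+1]\setminus\{i,j\}$, of size $t-1$. The only difference is that the paper takes the canonical set $H_0=[t+1]\cup[t+4,k+2]$, which precedes every $H$ with $|H\cap[t+3]|\leq t+1$, whereas you build $G$ from $F$ by keeping its $k-t-1$ largest elements outside $[t+3]$; both choices are valid.
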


\begin{proof}
Since $\hf\not\subset \mathcal{A}(n,k,t+3)$, there exists $H\in \hf$ satisfying $|H\cap [t+3]|\leq t+1$. Since $H_0:=[t+1]\cup [t+4,k+2]\prec H$, by shiftedness  $H_0\in \hf$.

If $\thf_1([t+3]\setminus \{i\})$ and $\thf_1([t+3]\setminus \{j\})$  are not cross-intersecting, then we can find $G_i\in \thf_1([t+3]\setminus \{i\})$, $G_j\in \thf_1([t+3]\setminus \{j\})$ with $G_i\cap G_j=\emptyset$. Let  $F_i=G_i\cup [t+3]\setminus \{i\}$, $F_j=G_j\cup [t+3]\setminus \{j\}$. Then $H_0\cap F_i\cap F_j=[t+1]\setminus \{i,j\}$ has size $t-1$, a contradiction.
\end{proof}

\begin{lemma}\label{lem-key1}
    Let  $\hf\subset \binom{[n]}{k}$ be a shifted, non-trivial $3$-wise $t$-intersecting family and let $n\geq \frac{1}{\alpha}(k-t)+k$ with $\alpha=\sqrt{\frac{\theta}{t+3}}$, $\theta=1.654$.  If $|\hf|>\max\{\binom{n-t}{k-t},|\ha(n,k,t+3)|\}$ and $t\geq 45$ then
\begin{align}\label{ineq-4.4}
|\thf_{2}([t+1])|> \binom{n-t-3}{k-t-3}.
\end{align}
\end{lemma}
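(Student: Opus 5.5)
We argue by contradiction: assume $|\thf_2([t+1])|\leq \binom{n-t-3}{k-t-3}$ and show that then $|\hf|\leq \max\{\binom{n-t}{k-t},|\ha(n,k,t+3)|\}$.

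\emph{Decomposition.} Write
\[
|\hf|=|\hf_0|+|\hf_1|+|\hf_2|+|\hf_3|+\sum_{i\geq 4}|\hf_i|.
\]
We bound each $\hf_i$ by quantities comparable to $\binom{n-t-3}{k-t-j}$. Throughout, the hypothesis $n\geq \frac1\alpha(k-t)+k$, i.e.\ $k-t\leq \alpha(n-k)$, lets us apply \eqref{ineq-3.0} repeatedly. Each drop in the lower index costs a factor $\alpha$.

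\emph{Bounding the layers.}
\begin{itemize}
\item $|\hf_0|\leq \binom{n-t-3}{k-t-3}$.
\item $|\hf_1|\leq (t+3)\binom{n-t-3}{k-t-2}$. This is the main term of $|\ha(n,k,t+3)|$.
\item For $\hf_2$, the term $T=[t+1]$ is bounded by the assumption.
\item For the other $T\in\binom{[t+3]}{t+1}$, Lemma \ref{lem-3.1}(iii) gives 2-wise 2-intersecting or 3-wise 3-intersecting. Lemma \ref{lem-F20} then gives $\binom{n-t-3}{k-t-3}$ per $T$, or better when $[t]\not\subset T$ (use Fact \ref{fact-key} to pass to 2-wise 4-intersecting, giving $\binom{n-t-3}{k-t-5}$).
\item For $\hf_3$ and $\hf_i$ with $i\geq 4$, Lemma \ref{lem-3.1}(i),(ii) with Fact \ref{fact-key} give 2-wise $(3i-2)$-intersecting. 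Lemma \ref{lem-F20} then bounds $|\thf_i(T)|\leq \binom{n-t-3}{k-t-4i+2}$ roughly. Summed over the $\binom{t+3}{i}$ choices of $T$, these form a rapidly decaying series in $\alpha$, since $\binom{t+3}{i}\alpha^{4i-\cdot}$ is small because $\alpha^2(t+3)=\theta$.
\end{itemize}

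\emph{Using $\hf\not\subset\ha(n,k,t+3)$.} Since $|\hf|>|\ha(n,k,t+3)|$, we have $\hf\not\subset\ha(n,k,t+3)$, so Fact \ref{fact-key1} applies. The families $\thf_1([t+3]\setminus\{j\})$, $j\in[t+1]$, are pairwise cross-intersecting.

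Theorem \ref{thm-hilton} (with $m=t+1$ families on $n-t-3$ points, $k-t-2$ uniform) then gives
\[
\sum_j|\thf_1|\leq \max\Bigl\{\binom{n-t-3}{k-t-2},\ (t+1)\binom{n-t-4}{k-t-3}\Bigr\}.
\]
This is much smaller than $(t+1)\binom{n-t-3}{k-t-2}$; it is the saving that should beat $\binom{n-t}{k-t}$. The two remaining sets $T=[t+3]\setminus\{t+2\}$ and $[t+3]\setminus\{t+3\}$ contribute at most $2\binom{n-t-3}{k-t-2}$.

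\emph{Comparison.} The total is then roughly
\[
\Bigl(3+(t+1)\alpha\Bigr)\binom{n-t-3}{k-t-2}+O\Bigl((t^2\alpha^2)\binom{n-t-3}{k-t-2}\alpha\Bigr).
\]
We compare this with $\binom{n-t}{k-t}\geq \alpha^{-1}\binom{n-t-3}{k-t-2}$ (up to lower-order factors). Since $\binom{n-t}{k-t}\approx \binom{n-t-3}{k-t-3}\,(1/\alpha)^3$-ish, what we need is
\[
3\alpha+(t+1)\alpha^2+\ldots<1\text{-type inequality},
\]
with $(t+3)\alpha^2=\theta=1.654$.

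\emph{Main obstacle.} The hard step is making this final numerical inequality hold with $\theta=1.654$ and $t\geq 45$. The naive bounds are too lossy, so some care is needed:
\begin{itemize}
\item use $\binom{n-t}{k-t}=\sum_{j}\binom{n-t-3}{k-t-j}\binom{3}{j}$ exactly, rather than crude estimates;
\item use the cross-intersecting conditions of Lemma \ref{lem-3.2}(iv) to couple $\hf_1$ with $\thf_2([t+1])$;
\item use the bipartite/Hilton-type trade-off (Proposition with \eqref{ineq-prop4.3}) for the pair $\thf_2([t+1])$ and $\thf_1$.
\end{itemize}
I expect the argument to reduce to checking a single explicit polynomial inequality in $\alpha$ and $t$, whose validity threshold produces the constants $1.654$ and $45$.
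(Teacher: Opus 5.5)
\textbf{There is a genuine gap, in your bounds for the middle layers.} Your treatment of $\hf_1$ via Fact~\ref{fact-key1} and Theorem~\ref{thm-hilton} is fine; it is also the paper's first step.

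The decaying series, however, rests on upgrading Lemma~\ref{lem-3.1} by one via Fact~\ref{fact-key}. You pass from 3-wise 3-intersecting to 2-wise 4-intersecting for $\thf_2(T)$ with $[t]\not\subset T$, and from 3-wise $(3i-3)$ to 2-wise $(3i-2)$ for $\thf_i(T)$. But Fact~\ref{fact-key} needs non-triviality, and nothing in your argument makes these sections non-trivial.

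Here is an example. The family $\{F\in\binom{[n]}{k}\colon |F\cap[t+6]|\geq t+4\}$ is shifted, non-trivial and 3-wise $t$-intersecting. For every $T\in\binom{[t+3]}{t+1}$, its section $\thf_2(T)$ is the full star of $\{t+4,t+5,t+6\}$. That star is only 2-wise 3-intersecting and has size $\binom{n-t-6}{k-t-4}$, far larger than $\binom{n-t-3}{k-t-5}$ in the relevant range. Here $\thf_2([t+1])$ is small too, so your contradiction hypothesis does not rule this out.

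With only what is actually guaranteed, the count does not close. The guarantees are: 2-wise 3-intersecting for those $\thf_2(T)$, 2-wise 6-intersecting for $\thf_3(T)$ with $T\neq[t]$, and 2-wise $(3i-3)$-intersecting for $\thf_i(T)$. In units of $\binom{n-t-3}{k-t-2}$, Lemma~\ref{lem-F20} and \eqref{ineq-3.0} then give about
$\binom{t+3}{2}\alpha^2\approx\frac{\theta}{2}(t+3)$, about $\binom{t+3}{3}\alpha^4\approx\frac{\theta^2}{6}(t+3)$, and about $\sum_{i\geq 4}\binom{t+3}{i}\alpha^{2i-2}\approx 0.27(t+3)$.
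That is roughly $1.55(t+3)$ before $\hf_1$ is even counted, whereas $|\ha(n,k,t+3)|=(t+3)\binom{n-t-3}{k-t-2}+\binom{n-t-3}{k-t-3}$.

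The couplings you list at the end do not rescue this. Cross-intersection with $\thf_2([t+1])$ (Lemma~\ref{lem-3.2}) is strong only when $\thf_2([t+1])$ is large. That is how the paper uses it after this lemma, not under your hypothesis that $\thf_2([t+1])$ is small.

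Two smaller slips. First, 2-wise $(3i-2)$-intersecting $(k-t-3+i)$-sets give $\binom{n-t-3}{k-t-2i-1}$, not $\binom{n-t-3}{k-t-4i+2}$. Second, your target ``$3\alpha+(t+1)\alpha^2<1$'' is false, since $(t+1)\alpha^2\approx\theta>1$; the comparison that works is with $|\ha(n,k,t+3)|$.

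\textbf{The missing idea is a case split on the exact 2-wise intersection size of $\hf$}, which is at least $t+1$ by Fact~\ref{fact-key}.

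\emph{Exactly $t+1$.} Lemma~\ref{lem-4.5} already gives $|\hf|\leq\max\{\binom{n-t}{k-t},|\ha(n,k,t+3)|\}$. It uses Theorem~\ref{thm-lz} on the pairwise cross 2-intersecting $\hg_1,\ldots,\hg_{t+1}$, and the Kneser-graph trade-off \eqref{ineq-prop4.3} for $\hg_0$ against $\hg_{t+1}$. This, not $\thf_2([t+1])$ versus $\thf_1$, is where that proposition enters. So one may assume $\hf$ is 2-wise $(t+2)$-intersecting, which activates the ``moreover'' clauses of Lemma~\ref{lem-3.1}.

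\emph{Exactly $t+2$.} Shiftedness gives $|F\cap[t+2]|\geq t$ for all $F$. Hence $\hf_i=\emptyset$ for $i\geq 4$, the $\thf_2(T)$ are 2-wise 3-intersecting, and $\thf_3([t])$ is 2-wise 5-intersecting. Moreover all $t+3$ families $\thf_1([t+3]\setminus\{i\})$ are pairwise cross-intersecting, so Theorem~\ref{thm-hilton} bounds $|\hf_1|$ by about $\sqrt{\theta(t+3)}\binom{n-t-3}{k-t-2}$.

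In the subcase where some pair meets in exactly $t+2$ elements with at most $t$ of them in $[t+2]$, the total is
$\bigl(\frac{\theta}{2}(t+2)+\sqrt{\theta(t+3)}+\sqrt{\theta/(t+3)}\bigr)\binom{n-t-3}{k-t-2}+\binom{n-t-3}{k-t-3}\leq|\ha(n,k,t+3)|$.
This is exactly where $\theta=1.654$ and $t\geq 45$ come from. In the other subcase the $\thf_1$'s are cross 2-intersecting and Theorem~\ref{thm-lz} applies.

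\emph{2-wise $(t+3)$-intersecting.} The $\thf_2(T)$ become 2-wise 4-intersecting and cost only $O(\sqrt{t})$. The $\thf_1$'s are cross 2-intersecting and cost less than $\theta\binom{n-t-3}{k-t-2}$ by Theorem~\ref{thm-lz}. This leaves room for the roughly $0.78(t+1)\binom{n-t-3}{k-t-2}$ coming from $\hf_3\cup\hf_4\cup\cdots$.
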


\begin{proof}
 By Theorem \ref{thm-1.5}, we may assume $n\leq \frac{t+4}{2}(k-t)$.  Note that $k\geq t+3$ and $t\geq 45$. It implies $\frac{n-t-3}{k-t-2} < t+1$. Then by Fact \ref{fact-key1} and Theorem \ref{thm-hilton} we have
\begin{align}\label{ineq-new4.1}
\sum_{1\leq i\leq t+1} |\thf_1([t+3]\setminus \{i\})| \leq (t+1)\binom{n-t-4}{k-t-3}.
\end{align}

By Lemma \ref{lem-4.5}, we may assume that $\hf$ is 2-wise $(t+2)$-intersecting and we distinguish two  cases.

{\bf Case 1.} $\hf$ is exactly  2-wise $(t+2)$-intersecting.

Then by shiftedness there exist $G,H\in \hf$ with $G\cap H=[t+2]$ whence
\begin{align}\label{ineq-4.2}
|F\cap [t+2]| \geq t \mbox{ for all }F\in \hf.
\end{align}
It follows that $\hf_i=\emptyset$ for all $i\geq 4$.

{\bf Case 1.1. } There exist $F_1,F_2\in \hf$ with $|F_1\cap F_2|=t+2$ and $|F_1\cap F_2\cap [t+2]|\leq t$.

By shiftedness, we may assume that  $F_1\cap F_2=[t]\cup \{t+3,t+4\}$. Then for any $F\in \hf$,
\begin{align}\label{ineq-4.1}
|F\cap [t+2]| \geq t,\ |F\cap ([t]\cup \{t+3,t+4\})| \geq t \mbox{ for all }F\in \hf.
\end{align}
Thus for any $F\in \hf$, one of (i), (ii), (iii) holds:
\begin{itemize}
  \item[(i)] $[t]\subset F$.
  \item[(ii)] $|F\cap [t]|=t-1$ and $|F\cap [t+1,t+4]|\geq 3$.
  \item[(iii)] $|F\cap [t]|=t-2$ and $[t+1,t+4]\subset F$.
\end{itemize}
Note that (ii) follows from \eqref{ineq-4.1} and  the shiftedness of $\hf$. Since $\hf$ is 2-wise $(t+2)$-intersecting, by Lemma \ref{lem-3.1} (ii), $\thf_3([t])$ is 2-wise 5-intersecting.  In view of Lemma \ref{lem-F20},
\[
|\hf_3|= |\thf_3([t])|  \leq \binom{n-t-3}{k-t-5}<\binom{n-t-3}{k-t-4}.
\]
By Lemma \ref{lem-3.1} (iii) and Lemma \ref{lem-F20},
\[
|\hf_2|-|\thf_{2}([t+1])| \leq \left(2+3t+\binom{t}{2}\right)\binom{n-t-3}{k-t-4}.
\]
Since $\hf$ is 2-wise $(t+2)$-intersecting, we infer that $\thf_1([t+3]\setminus \{i\})$ and $\thf_1([t+3]\setminus \{j\})$ are cross-intersecting for $1\leq i<j\leq t+3$. Since $n\leq \frac{t+4}{2}(k-t)$ and $k\geq t+3$ imply $\frac{n-t-3}{k-t-2} < t+3$, by  Theorem \ref{thm-hilton} we have
\[
|\hf_1|=\sum_{1\leq i\leq t+3} |\thf([t+3]\setminus \{i\})| \leq (t+3)\binom{n-t-4}{k-t-3}\leq (t+3)\binom{n-t-3}{k-t-3}.
\]
Clearly $|\hf_0|\leq \binom{n-t-3}{k-t-3}$. Thus,
\[
|\hf|-|\thf_{2}([t+1])|=|\hf_0|+|\hf_1|+(|\hf_2|-|\thf_{2}([t+1])|)+|\hf_3|\leq  \binom{t+3}{2}\binom{n-t-3}{k-t-4}+(t+4)\binom{n-t-3}{k-t-3}.
\]
By $n\geq \frac{1}{\alpha}(k-t)+k$ and \eqref{ineq-3.0},
 \begin{align*}
 &\quad \binom{t+3}{2}\binom{n-t-3}{k-t-4}+(t+3)\binom{n-t-3}{k-t-3} \\[3pt]
 &\leq \frac{\theta}{2}(t+2)\binom{n-t-3}{k-t-2}+\sqrt{\theta(t+3)}\binom{n-t-3}{k-t-2}\\[3pt]
 &= \left(\frac{\theta}{2}(t+2)+\sqrt{\theta(t+3)}\right)\binom{n-t-3}{k-t-2}.
 \end{align*}
If $|\thf_{2}([t+1])|\leq \binom{n-t-3}{k-t-3}<\sqrt{\frac{\theta}{t+3}}\binom{n-t-3}{k-t-2}$, then for $t\geq 45$,
\begin{align*}
|\hf| &\leq \left(\frac{\theta}{2}(t+2)+\sqrt{\theta(t+3)}+\sqrt{\frac{\theta}{t+3}}\right)\binom{n-t-3}{k-t-2}+\binom{n-t-3}{k-t-3}\\[3pt]
&\leq (t+3)\binom{n-t-3}{k-t-2} +\binom{n-t-3}{k-t-3}=|\mathcal{A}(n,k,t+3)|,
\end{align*}
a contradiction.

{\bf Case 1.2. } $|F_1\cap F_2\cap [t+2]|\geq t+1$ for any $F_1, F_2\in \hf$ with $|F_1\cap F_2|=t+2$.

Let $1\leq i<j\leq t+2$. 
For any $ E_i\in \thf_1([t+3]\setminus \{i\})$ and $E_j\in \thf_1([t+3]\setminus \{j\})$, let $F_i=E_i\cup[t+3]\setminus \{i\}$ and $F_j=E_j\cup[t+3]\setminus \{j\}$, clearly we have  $|F_i\cap F_j\cap [t+2]|= t$. Then our assumption implies that $|F_i\cap F_j|=t+1 +|E_i\cap E_j|\geq t+3$. It follows that $|E_i\cap E_j|\geq 2$. Thus  $\thf_1([t+3]\setminus \{i\})$ and $\thf_1([t+3]\setminus \{j\})$ are cross 2-intersecting for $1\leq i<j\leq t+2$.
By  Theorem \ref{thm-lz}, we have
\begin{align*}
|\hf_1|=&|\thf([t+2])|+\sum_{1\leq i\leq t+2} |\thf([t+3]\setminus \{i\})| \\[3pt]
&\leq \binom{n-t-3}{k-t-2}+\max\left\{\binom{n-t-3}{k-t-2},(t+2)\binom{n-t-5}{k-t-4}\right\}\\[3pt]
&<(\theta+1)\binom{n-t-3}{k-t-2}.
\end{align*}

By \eqref{ineq-4.2} we infer that $\thf_3(T)=\emptyset$ for any $T\in \binom{[t+3]}{t}\setminus \binom{[t+2]}{t}$. Thus by Lemma \ref{lem-3.1} (ii) and Lemma \ref{lem-F20},
\[
|\hf_3| =\sum_{T\in \binom{[t+2]}{t}} |\thf_3(T)| \leq \left(\binom{t+2}{2}-1\right) \binom{n-t-3}{k-t-6} +\binom{n-t-3}{k-t-4}.
\]
By Lemma \ref{lem-3.1} (iii) and Lemma \ref{lem-F20},
\[
|\hf_2|-|\thf_{2}([t+1])|=\sum_{T\in \binom{[t+3]}{t+1}\setminus \{[t+1]\}, |T\cap [t+2]|\geq t}|\thf_2(T)| \leq \left(\binom{t+2}{2}+(t+1)\right)\binom{n-t-3}{k-t-4}.
\]
Clearly $|\hf_0|\leq \binom{n-t-3}{k-t-3}$. Thus,
\[
|\hf|-|\thf_{2}([t+1])|\leq \binom{t+2}{2} \binom{n-t-3}{k-t-6}+ \binom{t+3}{2}\binom{n-t-3}{k-t-4}+(\theta+1)\binom{n-t-3}{k-t-2}+\binom{n-t-3}{k-t-3}.
\]
By $n\geq \frac{1}{\alpha}(k-t)+k$ and \eqref{ineq-3.0},
 \begin{align*}
 &\quad\binom{t+2}{2} \binom{n-t-3}{k-t-6}+ \binom{t+3}{2}\binom{n-t-3}{k-t-4} \\[3pt]
 &\leq \frac{\theta^2}{2}\binom{n-t-3}{k-t-2}+ \frac{\theta}{2}(t+2)\binom{n-t-3}{k-t-2}\\[3pt]
 &= \left(\frac{\theta^2}{2}+\frac{\theta}{2}(t+2)\right)\binom{n-t-3}{k-t-2}.
 \end{align*}
If $|\thf_{2}([t+1])|\leq \binom{n-t-3}{k-t-3}<\sqrt{\frac{\theta}{t+3}}\binom{n-t-3}{k-t-2}$, then for $t\geq 18$,
\begin{align*}
|\hf| &\leq \left(\frac{\theta^2}{2}+\frac{\theta}{2}(t+2)+\theta+1+\sqrt{\frac{\theta}{t+3}}\right)\binom{n-t-3}{k-t-2}+\binom{n-t-3}{k-t-3}\\[3pt]
&\leq (t+3)\binom{n-t-3}{k-t-2} +\binom{n-t-3}{k-t-3}=|\mathcal{A}(n,k,t+3)|,
\end{align*}
a contradiction.  Thus $|\thf_{2}([t+1])|> \binom{n-t-3}{k-t-3}$.

{\bf Case 2.} $\hf$ is 2-wise $(t+3)$-intersecting.

By Lemma \ref{lem-3.1} (i) and Lemma \ref{lem-F20},
\[
\sum_{4\leq i\leq  t+3} |\hf_i| \leq \sum_{4\leq i\leq t+3} \binom{t+3}{i}\binom{n-t-3}{k-t-2i}.
\]
By Lemma \ref{lem-3.1} (ii) and Lemma \ref{lem-F20},
\[
    |\hf_3| \leq \binom{t+3}{3}\binom{n-t-3}{k-t-6}.
\]
By Lemma \ref{lem-3.1} (iii) and Lemma \ref{lem-F20},
\[
|\hf_2|-|\thf_2([t+1])|=\sum_{T\in \binom{[t+3]}{t+1}\setminus \{[t+1]\}}|\thf_2(T)| \leq \left(\binom{t+3}{2}-1\right)\binom{n-t-3}{k-t-5}.
\]
 Since $\hf$ is 2-wise $(t+3)$-intersecting, we infer that $\thf_1([t+3]\setminus \{i\}), \thf_1([t+3]\setminus \{j\})$ are cross 2-intersecting. By Theorem \ref{thm-lz}, for $n-t-3\geq 3(k-t+1)$
\[
|\hf_1| \leq\max\left\{\binom{n-t-3}{k-t-2},(t+3)\binom{n-t-5}{k-t-4}\right\}< \theta\binom{n-t-3}{k-t-2}.
\]
Clearly $|\hf_0|\leq \binom{n-t-3}{k-t-3}$.
Thus,
\begin{align*}
|\hf| -|\thf_{2}([t+1])|&\leq \sum_{3\leq i\leq t+3} \binom{t+3}{i}\binom{n-t-3}{k-t-2i} +\binom{t+3}{2}\binom{n-t-3}{k-t-5}+\theta\binom{n-t-3}{k-t-2}\\[3pt]
&\qquad\qquad +\binom{n-t-3}{k-t-3}.
\end{align*}
Note $\alpha^2= \frac{\theta}{t+3}$. By $n\geq \frac{1}{\alpha}(k-t)+k$ and \eqref{ineq-3.0}, for $i\geq 4$ we have
\begin{align*}
 \binom{t+3}{i}\binom{n-t-3}{k-t-2i} \leq   \binom{t+3}{i-1} \frac{t+4-i}{i} \alpha^2 \binom{n-t-3}{k-t-2i+2}\leq \frac{\theta}{4}\binom{t+3}{i-1}\binom{n-t-3}{k-t-2i+2}.
\end{align*}
 Thus,
\begin{align}\label{ineq-4.3}
 \sum_{3\leq i\leq t+3} \binom{t+3}{i}\binom{n-t-3}{k-t-2i} &\leq \frac{4}{4-\theta}\binom{t+3}{3} \binom{n-t-3}{k-t-6}\nonumber\\[3pt]
 &\leq \frac{4}{4-\theta} \frac{\theta^2(t+3)(t+2)(t+1)}{6(t+3)^2}\binom{n-t-3}{k-t-2}\nonumber\\[3pt]
 &<\frac{2\theta^2}{3(4-\theta)}(t+1)\binom{n-t-3}{k-t-2}.
\end{align}
Moreover,
\[
\binom{t+3}{2}\binom{n-t-3}{k-t-5} \leq \frac{\theta}{2}(t+3)\binom{n-t-3}{k-t-3}<\frac{\theta^{3/2}}{2}\sqrt{t+3}\binom{n-t-3}{k-t-2}.
\]
Therefore,
\[
|\hf| -|\thf_{2}([t+1])| <\left(\frac{2\theta^2}{3(4-\theta)}(t+1)+\frac{\theta^{3/2}}{2}\sqrt{t+3}+\theta\right)\binom{n-t-3}{k-t-2}+\binom{n-t-3}{k-t-3}.
\]
If $|\thf_{2}([t+1])|\leq \binom{n-t-3}{k-t-3}<\sqrt{\frac{\theta}{t+3}}\binom{n-t-3}{k-t-2}$, then for $t\geq 23$,
\begin{align*}
|\hf| &\leq \left(\frac{2\theta^2}{3(4-\theta)}(t+1)+\frac{\theta^{3/2}}{2}\sqrt{t+3}+\theta+\sqrt{\frac{\theta}{t+3}}\right)\binom{n-t-3}{k-t-2}+\binom{n-t-3}{k-t-3}\\[3pt]
&<(t+3)\binom{n-t-3}{k-t-2} +\binom{n-t-3}{k-t-3}=|\mathcal{A}(n,k,t+3)|,
\end{align*}
a contradiction.  Thus $|\thf_{2}([t+1])|> \binom{n-t-3}{k-t-3}$.
\end{proof}

\begin{proof}[Proof of Theorem \ref{thm-main}]
Let $\hf\subset \binom{[n]}{k}$ be  shifted,  non-trivial and $3$-wise $t$-intersecting. Without loss of generality assume  $|\hf|>|\ha(n,k,t+3)|$. If $(t+4,t+6,\ldots, 2k-t)\notin \thf_{2}([t+1])$, then by Proposition \ref{prop-F78},
\begin{align*}
|\thf_{2}([t+1])| \leq \binom{n-t-3}{k-t-3},
\end{align*}
contradicting \eqref{ineq-4.4}. Thus we may assume $(t+4,t+6,\ldots, 2k-t)\in \thf_{2}([t+1])$.

 By Lemma \ref{lem-3.2} (iv),  $\thf_1([t+3]\setminus \{j\}),\thf_{2}([t+1])$ are cross-intersecting for each $j\in [t+1]$. It follows that
\[
(t+5, t+7,\ldots, 2k-t-1)\notin \thf_1([t+3]\setminus \{j\}).
\]
By Proposition \ref{prop-F78},
\begin{align*}
|\thf_1([t+3]\setminus \{j\})| \leq \binom{n-t-3}{k-t-3} \mbox{ for each }j=1,2,\ldots,t+1.
\end{align*}
Thus,
\begin{align}\label{ineq-new3.1}
|\hf_1| \leq (t+1)\binom{n-t-3}{k-t-3}+2\binom{n-t-3}{k-t-2}.
\end{align}

By Lemma \ref{lem-3.2} (i), $\thf_{2}([t+1]), \thf_i(T)$ are cross $(2i-1)$-intersecting for $i=4,\ldots,t+3$ and for any $T\in \binom{[t+3]}{t+3-i}$. Since $(t+4,t+6,\ldots, 2k-t)\in \thf_{2}([t+1])$, we infer that  for any $T\in \binom{[t+3]}{t+3-i}$,
\[
[t+4,t+4i-2]\cup (t+4i-1,t+4i+1,\ldots ) \notin \thf_i(T).
\]
Thus,
\[
|\thf_i(T)| \leq \binom{n-t-3}{k-(t+3-i)-(4i-4)-1} = \binom{n-t-3}{k-t-3i}.
\]
Therefore,
\begin{align}\label{ineq-new3.2}
\sum_{4\leq i\leq t+3} |\hf_i| \leq \sum_{4\leq i\leq t+3} \binom{t+3}{i} \binom{n-t-3}{k-t-3i}.
\end{align}

By Lemma \ref{lem-3.2} (ii) for any $T\in \binom{[t+3]}{t}$, $\thf_{2}([t+1])$, $\thf_3(T)$ are cross 3-intersecting if $T=[t]$ and $\thf_{2}([t+1]), \thf_3(T)$ are cross $5$-intersecting if $T\neq [t]$. Then $(t+4,t+6,\ldots, 2k-t)\in \thf_{2}([t+1])$ implies
\begin{align*}
    &[t+4,t+6]\cup (t+7,t+9,\ldots ) \notin \thf_3([t]),\\[3pt]
    &[t+4,t+10]\cup (t+11,t+13,\ldots ) \notin \thf_3(T), \mbox{ for }T\neq [t].
\end{align*}
By Proposition \ref{prop-F78},
\begin{align*}
    |\thf_3(T)|\leq  \left\{\begin{array}{ll}
                 \binom{n-t-3}{k-t-5}, & \mbox{ if } T=[t]; \\[5pt]
                 \binom{n-t-3}{k-t-9}, & \mbox{ if } T\neq [t].
                \end{array}
                \right.
\end{align*}
Thus,
\begin{align}\label{ineq-new3.3-1}
|\hf_3| \leq  \left(\binom{t+3}{3}-1\right)\binom{n-t-3}{k-t-9}+ \binom{n-t-3}{k-t-5}.
\end{align}

By Lemma \ref{lem-3.2} (iii), for any $T\in \binom{[t+3]}{t+1}$, $\thf_{2}([t+1])$, $\thf_{2}(T)$ are cross 2-intersecting if $T\cap [t+1]=[t]$ and  $\thf_{2}([t+1]),\thf_{2}(T)$ are cross 3-intersecting if $[t]\not\subset T$. Then
\begin{align*}
    &\{t+4\}\cup (t+5,t+7,\ldots ) \notin \thf_2(T), \mbox{ for }T\cap [t+1]=[t], \\[3pt]
    &\{t+4,t+5,t+6\}\cup (t+7,t+9,\ldots ) \notin \thf_2(T), \mbox{ for }[t]\not\subset T.
\end{align*}
By Proposition \ref{prop-2.1},
\begin{align*}
    |\thf_2(T)|\leq  \left\{\begin{array}{ll}
                 \binom{n-t-3}{k-t-4}, & \mbox{ if } T\cap [t+1]=[t];\\[5pt]
                 \binom{n-t-3}{k-t-6}, & \mbox{ if } [t]\not\subset T.
                \end{array}
                \right.
\end{align*}
Obviously $|\thf_2([t+1])|\leq \binom{n-t-3}{k-t-1}$. Thus,
\begin{align}\label{ineq-new3.3-2}
|\hf_2|\leq 2\binom{n-t-3}{k-t-4}&+\left(\binom{t+3}{2}-3\right)\binom{n-t-3}{k-t-6}+\binom{n-t-3}{k-t-1}.
\end{align}
Clearly $|\hf_0|\leq \binom{n-t-3}{k-t-3}$.
Together with \eqref{ineq-new3.1}, \eqref{ineq-new3.2}, \eqref{ineq-new3.3-1} and \eqref{ineq-new3.3-2},
\begin{align*}
|\hf| &\leq (t+2)\binom{n-t-3}{k-t-3}+2\binom{n-t-3}{k-t-2}+\sum_{2\leq i\leq t+3} \binom{t+3}{i} \binom{n-t-3}{k-t-3i}\\[3pt]
&\qquad + \binom{n-t-3}{k-t-5} +2\binom{n-t-3}{k-t-4}+\binom{n-t-3}{k-t-1}.
\end{align*}
Since
\[
\binom{n-t}{k-t} =\binom{n-t-3}{k-t}+3\binom{n-t-3}{k-t-1}+3\binom{n-t-3}{k-t-2}+\binom{n-t-3}{k-t-3},
\]
to show $|\hf|\leq \binom{n-t}{k-t}$ it suffices to show that
\begin{align}\label{ineq-final}
&(t+1)\binom{n-t-3}{k-t-3}+\sum_{2\leq i\leq t+3} \binom{t+3}{i} \binom{n-t-3}{k-t-3i} +2\binom{n-t-3}{k-t-4} + \binom{n-t-3}{k-t-5}\nonumber\\[3pt]
&\qquad <\binom{n-t-3}{k-t}+2\binom{n-t-3}{k-t-1}+\binom{n-t-3}{k-t-2}.
\end{align}

Note that $\frac{\theta^{3/2}}{\sqrt{t+3}} <1$ for $t\geq 2$. Since for $i\geq 3$,
 \begin{align*}
\binom{t+3}{i} \binom{n-t-3}{k-t-3i} &\leq \binom{t+3}{i-1} \frac{t+4-i}{i} \alpha^3\binom{n-t-3}{k-t-3(i-1)}\\[3pt]
&\leq  \frac{\theta^{3/2}}{3\sqrt{t+3}} \binom{t+3}{i-1}\binom{n-t-3}{k-t-3(i-1)}\\[3pt]
&<\frac{1}{3}\binom{t+3}{i-1}\binom{n-t-3}{k-t-3(i-1)},
 \end{align*}
 by \eqref{ineq-3.0} we infer that
\begin{align*}
\sum_{2\leq i\leq t+3} \binom{t+3}{i} \binom{n-t-3}{k-t-3i} &\leq \frac{3}{2}\binom{t+3}{2} \binom{n-t-3}{k-t-6} \\[3pt]
&\leq \frac{3(t+3)(t+2)}{4} \frac{\theta^3}{(t+3)^3}\binom{n-t-3}{k-t}\\[3pt]
&\leq \frac{3\theta^{3}}{4(t+3)}\binom{n-t-3}{k-t}<\binom{n-t-3}{k-t}.
\end{align*}
Moreover, for $t\geq 2$
\begin{align*}
&\quad (t+1)\binom{n-t-3}{k-t-3} +2\binom{n-t-3}{k-t-4} + \binom{n-t-3}{k-t-5}\\[3pt]
&= \theta\binom{n-t-3}{k-t-1}+\left(\frac{2\theta}{t+3} + \frac{\theta^{3/2}}{(t+3)^{3/2}}\right) \binom{n-t-3}{k-t-2}\\[3pt]
&<2\binom{n-t-3}{k-t-1}+\binom{n-t-3}{k-t-2}.
\end{align*}
Thus \eqref{ineq-final} follows.
\end{proof}

\section{Proof of Theorem \ref{thm-main2}}

\begin{lemma}\label{lem-key}
    Let  $\hf\subset \binom{[n]}{k}$ be a shifted and non-trivial $3$-wise $t$-intersecting family and let $n\geq \frac{1}{\alpha}(k-t)+k$ with $\alpha=\sqrt{\frac{\theta}{t+3}}$, $\theta=1.583$.  If $|\hf|>|\ha(n,k,t+3)|$ and $t\geq 55$ then
\begin{align}\label{ineq-3.2}
|\thf_{2}([t+1])|> 2\binom{n-t-3}{k-t-2}.
\end{align}
\end{lemma}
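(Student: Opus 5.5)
The plan is to repeat the argument of Lemma \ref{lem-key1} with the threshold raised from $\binom{n-t-3}{k-t-3}$ to $2\binom{n-t-3}{k-t-2}$. The target comparison is $|\ha(n,k,t+3)|=(t+3)\binom{n-t-3}{k-t-2}+\binom{n-t-3}{k-t-3}$. We bound $|\hf|-|\thf_2([t+1])|$ by $c(t)\binom{n-t-3}{k-t-2}+\binom{n-t-3}{k-t-3}$ and check that $c(t)+2\le t+3$ for $t\ge 55$ and $\theta=1.583$. Then $|\thf_2([t+1])|\le 2\binom{n-t-3}{k-t-2}$ would force $|\hf|\le|\ha(n,k,t+3)|$, a contradiction.

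First, by Theorem \ref{thm-1.5} we may assume $n\le\frac{t+4}{2}(k-t)$. This makes $m\ge n/k$ in Theorem \ref{thm-hilton} for the relevant subfamilies. Lemma \ref{lem-4.5} needs $n\ge 3k-2t$, which follows from $1/\alpha\ge 2$. It disposes of the case where $\hf$ is exactly 2-wise $(t+1)$-intersecting, except that it gives the bound $\max\{\binom{n-t}{k-t},|\ha|\}$ rather than $|\ha|$.

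This is the first obstacle. Here the hypothesis is only $|\hf|>|\ha(n,k,t+3)|$, and for $n$ near $\frac{\sqrt{4t+9}-1}{2}k$ we may have $\binom{n-t}{k-t}>|\ha|$. I would handle this case directly instead. In Lemma \ref{lem-4.5}'s notation, $\thf_2([t+1])$ corresponds to the sets of $\hg_0$ avoiding $t+2,t+3$. I would go back into that proof and show the $\binom{n-t-1}{k-t-1}$ branch forces $|\hg_0|$ large and hence $|\thf_2([t+1])|$ large. Alternatively, I would simply note that in this branch the conclusion is what is needed for the non-trivial theorem (the bound by $\hb$ is handled later). So I treat the 2-wise $(t+2)$-intersecting cases as the core.

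The core then follows Cases 1.1, 1.2 and 2 of Lemma \ref{lem-key1} verbatim. In each case we bound $|\hf_0|$, $|\hf_1|$, $|\hf_2|-|\thf_2([t+1])|$, $|\hf_3|$ and $\sum_{i\ge4}|\hf_i|$ using Lemma \ref{lem-3.1}, Lemma \ref{lem-F20}, Theorems \ref{thm-hilton} and \ref{thm-lz}, and \eqref{ineq-3.0} with $\alpha^2=\theta/(t+3)$.

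For Case 1.1 this gives $c(t)=\frac{\theta}{2}(t+2)+\sqrt{\theta(t+3)}$. Then $c+2\le t+3$ requires $(1-\theta/2)t\gtrsim \sqrt{\theta t}+\theta$. With $\theta=1.583$ we have $1-\theta/2\approx0.2085$, and $\sqrt{1.583\cdot 58}\approx 9.6$. So the requirement is roughly $0.2085\,t\ge 9.6+1.6$, i.e.\ $t\gtrsim 54$. This is consistent with $t\ge55$, and it explains the smaller $\theta$.

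For Case 1.2, $c=\frac{\theta^2}{2}+\frac{\theta}{2}(t+2)+\theta+1$. For Case 2, $c=\frac{2\theta^2}{3(4-\theta)}(t+1)+\frac{\theta^{3/2}}{2}\sqrt{t+3}+\theta$. Both are comfortably below $t+1$ for $t\ge55$.

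The main obstacle is the tight numerics in Case 1.1 at $t=55$. There the $\hf_1$ bound $(t+3)\binom{n-t-4}{k-t-3}$ is the dominant loss, and one may need the sharper $\binom{n-t-4}{k-t-3}$ rather than $\binom{n-t-3}{k-t-3}$. The other difficulty is the exactly $(t+1)$-intersecting case described above.
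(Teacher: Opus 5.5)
Your treatment of the 2-wise $(t+2)$-intersecting families is fine. The bounds of Cases 1.1, 1.2 and 2 of Lemma~\ref{lem-key1} carry over with $\theta=1.583$ and the new threshold $2\binom{n-t-3}{k-t-2}$. In Case 1.1 the constant is about $56.7<58$ at $t=55$, so no sharpening is needed. The paper merges 1.1 and 1.2 into one case, with the slightly worse requirement $\frac{\theta^2}{2}+\frac{\theta}{2}(t+2)+\sqrt{\theta(t+3)}+2\le t+3$, and that is where $t\ge 55$ comes from.

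The gap is the case where $\hf$ is exactly 2-wise $(t+1)$-intersecting, which you never actually handle. You correctly see that Lemma~\ref{lem-4.5} is useless here, but neither fix works:
\begin{itemize}
\item Your option (b) changes the statement. The lemma must hold under $|\hf|>|\ha(n,k,t+3)|$ alone, where $\binom{n-t}{k-t}$ can exceed $|\ha(n,k,t+3)|$. The proof of Theorem~\ref{thm-main2} needs it in exactly this case, to get $(t+6,t+8,\ldots,2k-t+2)\in\thf_2([t+1])$.
\item Your option (a) is not carried out and does not work as sketched. The Li--Zhang branch of Lemma~\ref{lem-4.5} already yields only $\binom{n-t}{k-t}$. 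A lower bound on $|\hf([t+1],[t+1])|$ does not give $|\thf_2([t+1])|>2\binom{n-t-3}{k-t-2}$, because the members meeting $\{t+2,t+3\}$ can account for $\binom{n-t-2}{k-t-2}+\binom{n-t-3}{k-t-2}$ of it. So the threshold in Lemma~\ref{lem-4.5} only gives $|\thf_2([t+1])|>0$.
\end{itemize}

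The missing idea is a direct count, which is easy because the structure is very restricted. By shiftedness there are $G,H\in\hf$ with $G\cap H=[t+1]$, so the 3-wise condition gives $|F\cap[t+1]|\ge t$ for all $F\in\hf$. Consequently:
\begin{itemize}
\item $\hf_i=\emptyset$ for $i\ge 4$.
\item $\hf_3$ has only the traces $[t+1]\setminus\{i\}$, so Lemma~\ref{lem-3.1}(ii) and Lemma~\ref{lem-F20} give $|\hf_3|\le t\binom{n-t-3}{k-t-6}+\binom{n-t-3}{k-t-4}$.
\item The traces of $\hf_2$ other than $[t+1]$ are $([t+1]\setminus\{i\})\cup\{t+2\}$ and $([t+1]\setminus\{i\})\cup\{t+3\}$. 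Lemma~\ref{lem-3.1}(iii) then gives $|\hf_2|-|\thf_2([t+1])|\le 2\binom{n-t-3}{k-t-3}+2t\binom{n-t-3}{k-t-4}$.
\item Fact~\ref{fact-key1} (applicable since $|\hf|>|\ha(n,k,t+3)|$ forces $\hf\not\subset\ha(n,k,t+3)$) together with Theorem~\ref{thm-hilton} and $n\le\frac{t+4}{2}(k-t)$ gives $|\hf_1|\le(t+1)\binom{n-t-4}{k-t-3}+2\binom{n-t-3}{k-t-2}$. The two extra traces $[t+2]$ and $[t+1]\cup\{t+3\}$ are bounded trivially. Cross-intersection of all $t+3$ families $\thf_1([t+3]\setminus\{i\})$ is not available here.
\item $|\hf_0|\le\binom{n-t-3}{k-t-3}$.
\end{itemize}
Using \eqref{ineq-3.0} and $\alpha<\frac15$, this gives $|\hf|-|\thf_2([t+1])|\le\left(\frac{t+3}{5}+6\right)\binom{n-t-3}{k-t-2}+\binom{n-t-3}{k-t-3}$. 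Hence $|\thf_2([t+1])|\le 2\binom{n-t-3}{k-t-2}$ would force $|\hf|\le|\ha(n,k,t+3)|$ already for $t\ge 10$.
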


\begin{proof}
 By Theorem \ref{thm-1.5}, we may assume $n\leq \frac{t+4}{2}(k-t)$. Note that $k\geq t+3$ and $t\geq 55$. It implies that $\frac{n-t-3}{k-t-2} < t+1$. Since $\hf\not\subset \ha(n,k,t+3)$,  by Fact \ref{fact-key1} and Theorem \ref{thm-hilton} we have
\begin{align}\label{ineq-new5.01}
\sum_{1\leq i\leq t+1} |\thf_1([t+3]\setminus \{i\})| \leq (t+1)\binom{n-t-4}{k-t-3}.
\end{align}

Now we distinguish three  cases.

{\bf Case 1.} $\hf$ is exactly 2-wise $(t+1)$-intersecting.

Then by shiftedness there exist $G,H\in \hf$ with $G\cap H=[t+1]$ whence
\[
|F\cap [t+1]| \geq t \mbox{ for all }F\in \hf.
\]
It follows that $\hf_i=\emptyset$ for all $i\geq 4$.
By Lemma \ref{lem-3.1} (ii) and Lemma \ref{lem-F20},
\[
    |\hf_3| =\sum_{i\in [t+1]} |\thf_3([t+1]\setminus \{i\})| \leq t \binom{n-t-3}{k-t-6} +\binom{n-t-3}{k-t-4}.
\]
By Lemma \ref{lem-3.1} (iii) and Lemma \ref{lem-F20},
\[
|\hf_2|-|\thf_2([t+1])|=\sum_{T\in \binom{[t+3]}{t+1}\setminus \{[t+1]\}, |T\cap [t+1]|=t}|\thf_2(T)| \leq 2\binom{n-t-3}{k-t-3}+2t\binom{n-t-3}{k-t-4}.
\]
By \eqref{ineq-new5.01},
\[
|\hf_1|=\sum_{1\leq i\leq t+3} |\thf_1([t+3]\setminus \{i\})| \leq (t+1)\binom{n-t-4}{k-t-3} +2\binom{n-t-3}{k-t-2}.
\]
Clearly $|\hf_0|\leq \binom{n-t-3}{k-t-3}$.
Thus,
\[
|\hf|- |\thf_{2}([t+1])|\leq t \binom{n-t-3}{k-t-6}+ (2t+1)\binom{n-t-3}{k-t-4}+(t+4)\binom{n-t-3}{k-t-3}+2\binom{n-t-3}{k-t-2}.
\]
Note that $\alpha^2 =\frac{\theta}{t+3}\leq  \frac{2}{t+3}$ implying $\alpha<\frac{1}{5}$ for $t\geq 47$. By $n\geq \frac{1}{\alpha}(k-t)+k$ and \eqref{ineq-3.0},
 \begin{align*}
 &\quad t \binom{n-t-3}{k-t-6}+ (2t+1)\binom{n-t-3}{k-t-4}+(t+3)\binom{n-t-3}{k-t-3}+2\binom{n-t-3}{k-t-2}\\[3pt]
 &\leq (2t+3)\binom{n-t-3}{k-t-4}+(t+3)\binom{n-t-3}{k-t-3}+2\binom{n-t-3}{k-t-2}\\[3pt]
 &\leq 4\binom{n-t-3}{k-t-2}+\frac{t+3}{5}\binom{n-t-3}{k-t-2}+2\binom{n-t-3}{k-t-2}\\[3pt]
 &\leq \left(\frac{t+3}{5}+6\right)\binom{n-t-3}{k-t-2}.
 \end{align*}
If $|\thf_{2}([t+1])|\leq 2\binom{n-t-3}{k-t-2}$, then for $t\geq 10$,
\[
|\hf| \leq \left(\frac{t+3}{5}+8\right)\binom{n-t-3}{k-t-2} +\binom{n-t-3}{k-t-3}\leq (t+3)\binom{n-t-3}{k-t-2} +\binom{n-t-3}{k-t-3}=|\mathcal{A}(n,k,t+3)|,
\]
a contradiction. Thus $|\thf_{2}([t+1])|> 2\binom{n-t-3}{k-t-2}$.

{\bf Case 2.} $\hf$ is exactly  2-wise $(t+2)$-intersecting.

Then by shiftedness there exist $G,H\in \hf$ with $G\cap H=[t+2]$ whence
\begin{align*}
|F\cap [t+2]| \geq t \mbox{ for all }F\in \hf.
\end{align*}
It follows that $\hf_i=\emptyset$ for all $i\geq 4$.
By Lemma \ref{lem-3.1} (ii) and Lemma \ref{lem-F20},
\[
|\hf_3| =\sum_{T\in \binom{[t+2]}{t}} |\thf_3(T)| \leq \left(\binom{t+2}{2}-1\right) \binom{n-t-3}{k-t-6} +\binom{n-t-3}{k-t-4}.
\]
By Lemma \ref{lem-3.1} (iii) and Lemma \ref{lem-F20},
\[
|\hf_2|-|\thf_{2}([t+1])|=\sum_{T\in \binom{[t+3]}{t+1}\setminus \{[t+1]\}, |T\cap [t+2]|\geq t}|\thf_2(T)| \leq \left(\binom{t+2}{2}+(t+1)\right)\binom{n-t-3}{k-t-4}.
\]
Since $\hf$ is 2-wise $(t+2)$-intersecting, $\thf_1([t+3]\setminus \{i\})$ and $\thf_1([t+3]\setminus \{j\})$ are cross-intersecting for $1\leq i<j\leq t+3$. Since $n\leq \frac{t+4}{2}(k-t)$ implies $\frac{n-t-3}{k-t-2} < t+3$, by  Theorem \ref{thm-hilton} we have
\[
|\hf_1|=\sum_{1\leq i\leq t+3} |\thf([t+3]\setminus \{i\})| \leq (t+3)\binom{n-t-4}{k-t-3}.
\]
Clearly $|\hf_0|\leq \binom{n-t-3}{k-t-3}$. Thus,
\[
|\hf|-|\thf_{2}([t+1])|\leq \binom{t+2}{2} \binom{n-t-3}{k-t-6}+ \binom{t+3}{2}\binom{n-t-3}{k-t-4}+(t+4)\binom{n-t-3}{k-t-3}.
\]
By $n\geq \frac{1}{\alpha}(k-t)+k$ and \eqref{ineq-3.0},
 \begin{align*}
 &\quad\binom{t+2}{2} \binom{n-t-3}{k-t-6}+ \binom{t+3}{2}\binom{n-t-3}{k-t-4}+(t+3)\binom{n-t-3}{k-t-3} \\[3pt]
 &\leq \frac{\theta^2}{2}\binom{n-t-3}{k-t-2}+ \frac{\theta}{2}(t+2)\binom{n-t-3}{k-t-2}+\sqrt{\theta(t+3)}\binom{n-t-3}{k-t-2}\\[3pt]
 &= \left(\frac{\theta^2}{2}+\frac{\theta}{2}(t+2)+\sqrt{\theta(t+3)}\right)\binom{n-t-3}{k-t-2}.
 \end{align*}
If $|\thf_{2}([t+1])|\leq 2\binom{n-t-3}{k-t-2}$, then for $t\geq 55$,
\begin{align*}
|\hf| &\leq \left(\frac{\theta^2}{2}+\frac{\theta}{2}(t+2)+\sqrt{\theta(t+3)}+2\right)\binom{n-t-3}{k-t-2}+\binom{n-t-3}{k-t-3}\\[3pt]
&\leq (t+3)\binom{n-t-3}{k-t-2} +\binom{n-t-3}{k-t-3}=|\mathcal{A}(n,k,t+3)|,
\end{align*}
a contradiction.  Thus $|\thf_{2}([t+1])|> 2\binom{n-t-3}{k-t-2}$.

{\bf Case 3.} $\hf$ is 2-wise $(t+3)$-intersecting.

By Lemma \ref{lem-3.1} (i) and Lemma \ref{lem-F20},
\[
\sum_{4\leq i\leq  t+3} |\hf_i| \leq \sum_{4\leq i\leq t+3} \binom{t+3}{i}\binom{n-t-3}{k-t-2i}.
\]
By Lemma \ref{lem-3.1} (ii) and Lemma \ref{lem-F20},
\[
    |\hf_3| \leq \binom{t+3}{3}\binom{n-t-3}{k-t-6}.
\]
By Lemma \ref{lem-3.1} (iii) and Lemma \ref{lem-F20},
\[
|\hf_2|-|\thf_2([t+1])|=\sum_{T\in \binom{[t+3]}{t+1}\setminus \{[t+1]\}}|\thf_2(T)| \leq \left(\binom{t+3}{2}-1\right)\binom{n-t-3}{k-t-5}.
\]
 Since $\hf$ is 2-wise $(t+3)$-intersecting, $\thf_1([t+3]\setminus \{i\})$ is intersecting for $i\in [t+3]$. By the Erd\H{o}s-Ko-Rado Theorem,
\[
|\hf_1| \leq (t+3)\binom{n-t-4}{k-t-3}.
\]
Clearly $|\hf_0|\leq \binom{n-t-3}{k-t-3}$.
Thus,
\[
|\hf| -|\thf_{2}([t+1])|\leq \sum_{3\leq i\leq t+3} \binom{t+3}{i}\binom{n-t-3}{k-t-2i} +\binom{t+3}{2}\binom{n-t-3}{k-t-5} +(t+4)\binom{n-t-3}{k-t-3}.
\]
By \eqref{ineq-4.3},
\begin{align*}
 \sum_{3\leq i\leq t+3} \binom{t+3}{i}\binom{n-t-3}{k-t-2i}<\frac{2\theta^2}{3(4-\theta)}(t+1)\binom{n-t-3}{k-t-2}.
\end{align*}
Moreover,
\[
\binom{t+3}{2}\binom{n-t-3}{k-t-5}  +(t+3)\binom{n-t-3}{k-t-3}\leq \frac{\theta+2}{2}(t+3)\binom{n-t-3}{k-t-3}<\frac{(\theta+2)\sqrt{\theta}}{2}\sqrt{t+3}\binom{n-t-3}{k-t-2}.
\]
Therefore,
\[
|\hf| -|\thf_{2}([t+1])| <\left(\frac{2\theta^2}{3(4-\theta)}(t+1)+\frac{(\theta+2)\sqrt{\theta}}{2}\sqrt{t+3}\right)\binom{n-t-3}{k-t-2}+\binom{n-t-3}{k-t-3}.
\]
If $|\thf_{2}([t+1])|\leq 2\binom{n-t-3}{k-t-2}$, then for $t\geq 55$,
\begin{align*}
|\hf| &\leq \left(\frac{2\theta^2}{3(4-\theta)}(t+1)+\frac{(\theta+2)\sqrt{\theta}}{2}\sqrt{t+3}+2\right)\binom{n-t-3}{k-t-2}+\binom{n-t-3}{k-t-3}\\[3pt]
&<(t+3)\binom{n-t-3}{k-t-2} +\binom{n-t-3}{k-t-3}=|\mathcal{A}(n,k,t+3)|,
\end{align*}
a contradiction.  Thus $|\thf_{2}([t+1])|> 2\binom{n-t-3}{k-t-2}$.
\end{proof}

\begin{proof}[Proof of Theorem \ref{thm-main2}]
Let $\hf\subset \binom{[n]}{k}$ be a non-trivial $3$-wise $t$-intersecting family with $|\hf|=h(n,k,3,t)$. By Corollary \ref{cor-2.1}, we may assume that $\hf$ is shifted.  Let $\theta=1.583$ and $\alpha= \sqrt{\frac{\theta}{t+3}}$.  Direct computation shows that
\begin{align*}
n\geq \frac{\sqrt{4t+9}-1}{2}k\geq \left(\frac{1}{\alpha}+1\right)k \geq \frac{k-t}{\alpha}+k\mbox{ for }t\geq 55.
\end{align*}
Thus $k-t\leq \alpha(n-k)$.

Without loss of generality assume  $|\hf|>|\ha(n,k,t+3)|$. If $(t+6,t+8,\ldots, 2k-t+2)\notin \thf_{2}([t+1])$, then by Proposition \ref{prop-F78},
\begin{align*}
|\thf_{2}([t+1])| \leq 2\binom{n-t-4}{k-t-2},
\end{align*}
contradicting \eqref{ineq-3.2}. Thus we may assume $(t+6,t+8,\ldots, 2k-t+2)\in \thf_{2}([t+1])$.

Let $j$ be the maximum integer satisfying $(t+3+j,t+5+j,\ldots, 2k-t+j-1)\in \thf_{2}([t+1])$. Then $j\geq 3$. Since $\hf$ is non-trivial, there is a member in $\hf$ that does not contain $[t]$ as a subset. Then by shiftedness $[k+1]\setminus \{t\}\in \hf$. Since $\hf$ is 2-wise $(t+1)$-intersecting, $[t+1]\cup \{k+2,k+3,\ldots,2k-t\}\not\in \hf$. By shiftedness,  we infer that
\begin{align}\label{ineq-5.1}
G\cap [t+2,k+1] \neq \emptyset \mbox{ for any }G\in  \hf([t+1],[t+1]).
\end{align}
 Thus we  have $3\leq j\leq k-t-2$.

\begin{claim}
    If $j=k-t-2$ then $\hf\subset \hb(n,k,t+3)$.
\end{claim}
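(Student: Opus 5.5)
The plan is to fix an arbitrary $F\in\hf$ and check that it lies in $\hb(n,k,t+3)$, treating separately the cases $[t+1]\subset F$ and $[t+1]\not\subset F$. When $j=k-t-2$, the maximal set recorded by $j$ is $E_0:=\{k+1,k+3,\ldots,3k-2t-3\}$, which has $k-t-1$ elements and lies in $\thf_2([t+1])$. So $G_0:=[t+1]\cup E_0\in\hf$. Since $\hf$ is shifted, we also get $[t+1]\cup E\in\hf$ for every $(k-t-1)$-set $E=\{e_1<\cdots<e_{k-t-1}\}\subset[t+4,n]$ with $e_l\le k+2l-1$ for all $l$. This downward-closed family of "admissible" sets $E$ will serve as test sets.

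\emph{Case $[t+1]\subset F$.} Here $F\in \hf([t+1],[t+1])$ up to adding $[t+1]$, so \eqref{ineq-5.1} gives $F\cap[t+2,k+1]\neq\emptyset$. This is exactly the condition for $F$ to be in the first part of $\hb(n,k,t+3)$.

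\emph{Case $[t+1]\not\subset F$.} The paper already shows that $\hf$ is 2-wise $(t+1)$-intersecting, and intersecting $F$ with $G_0$ gives $|F\cap[t+1]|\ge t$. So $F\cap[t+1]=[t+1]\setminus\{i\}$ for some $i\in[t+1]$, and $|F\setminus[t+1]|=k-t$. We must show $F\setminus[t+1]=[t+2,k+1]$, i.e.\ $F\subset[k+1]$. Suppose not. Then $F\setminus[t+1]$ has some element $y\ge k+2$, so $F$ has at most $k-t-1$ elements in $[t+2,k+1]$.

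The aim is to build an admissible $E$ with $E\cap F=\emptyset$. Then $G=[t+1]\cup E\in\hf$ satisfies $|F\cap G|=t$, contradicting 2-wise $(t+1)$-intersection. To build $E$, choose $e_1<e_2<\cdots$ greedily, taking each time the smallest element of $[t+4,n]\setminus F$ above the previous one. We need $e_l\le k+2l-1$, i.e.\ at least $l$ elements of $[t+4,k+2l-1]$ avoid $F$, for each $l$. Since $[t+4,k+2l-1]$ has $k+2l-t-4$ elements and $F$ occupies at most $k-t-1$ of those not exceeding $k+1$, counting gives enough room for $l\ge 3$.

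The main obstacle is the small indices $l=1,2$, where the count is tight. This happens only in the extreme configuration where $F\setminus[t+1]$ fills $[t+4,k+1]$ almost completely and contains $k+2$ or $k+3$. For these residual configurations the first tool is the 3-wise $t$-intersecting property rather than the 2-wise one. Non-triviality and shiftedness put $[k+1]\setminus\{t\}$ in $\hf$. I will look for two admissible sets $E,E'$ such that $F\cap([t+1]\cup E)\cap([t+1]\cup E')$ has size $t-1$, using $E\cap E'\cap F=\emptyset$, which is much easier to arrange than $E\cap F=\emptyset$.

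If that still fails, the fallback is to shift $F$ itself. One replaces $y\ge k+2$ by a missing element of $[t+2,k+1]$, or uses the definition of $j$ together with Lemma \ref{lem-3.2}(iv), which says $\thf_1$ and $\thf_2([t+1])$ are cross-intersecting. Either route aims to reduce to an $F$ containing $t+2$ or $t+3$, which are the positions where the counting has slack. Once $F\subset[k+1]$ is established, $F=[k+1]\setminus\{i\}$ with $i\le t+1$, which is the second part of $\hb(n,k,t+3)$, and the claim follows.
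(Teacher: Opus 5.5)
You handle the case $[t+1]\subset F$ correctly. The case $[t+1]\not\subset F$ is the whole content of the claim, and it is not actually proved. Your test, 2-wise $(t+1)$-intersection of $F$ with $[t+1]\cup E$ for admissible $E\prec E_0$, cannot succeed on its own. Take $F=([t+1]\setminus\{i\})\cup\{t+2\}\cup[t+4,k+2]$: every admissible $E$ has $e_1\in[t+4,k+1]\subset F$, so no admissible $E$ is disjoint from $F$. Your count for $l\ge 3$ is also wrong, because it ignores the elements of $F$ in $[k+2,k+2l-1]$. For instance, $F\setminus[t+1]=([t+4,k+1]\setminus\{x_1,x_2\})\cup[k+2,k+5]$ blocks every admissible $E$ at $l=3$.

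The two rescue routes you sketch do not work as written.
\begin{itemize}
\item The 3-wise route is impossible. The intersection $F\cap([t+1]\cup E)\cap([t+1]\cup E')$ contains $F\cap[t+1]=[t+1]\setminus\{i\}$, which already has $t$ elements, so it can never have size $t-1$.
\item The shifting route goes in the wrong direction. Replacing $y\ge k+2$ by a missing element of $[t+2,k+1]$ moves $F$ towards $[k+1]\setminus\{i\}\in\hb(n,k,t+3)$. It removes exactly the feature you want to rule out and gives no contradiction.
\end{itemize}
Separately, $|F\cap[t+1]|\ge t$ does not follow from $|F\cap G_0|\ge t+1$ alone; it also needs a shift.

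The missing idea is to shift $F$ down to one canonical set that keeps an element beyond $k+1$ but \emph{avoids} $E_0$. Suppose $F$ misses some element of $[t+1]$ and $F\not\subset[k+1]$. Then $G'=[k+2]\setminus\{t+1,k+1\}=[t]\cup[t+2,k]\cup\{k+2\}\prec F$: for $l\ge t+1$ the $l$-th element of $F$ is at least $l+1$, and its largest element is at least $k+2$. So $G'\in\hf$ by shiftedness. Since $E_0=\{k+1,k+3,\ldots,3k-2t-3\}$ contains neither $k+2$ nor any element of $[t+2,k]$, we get $G'\cap G_0=[t]$. This contradicts $\hf$ being 2-wise $(t+1)$-intersecting. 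Hence every $F\in\hf$ with $[t+1]\not\subset F$ equals $[k+1]\setminus\{i\}$ for some $i\in[t+1]$, and no prior bound on $|F\cap[t+1]|$ is needed.

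The paper uses exactly this shift for $F\notin\hf_0\cup\hf_1$. For $F\in\hf_1$ it uses Lemma \ref{lem-3.2}(iv), and there your greedy count does go through, because then $|F\cap[t+4,n]|=k-t-2$. Along your own lines, you could also reduce to that case by replacing the two smallest elements of $F\setminus[t+1]$ with $t+2,t+3$ while keeping $y$. Getting just one of $t+2,t+3$ into $F$, as you suggest, is not enough: the first example above contains $t+2$ and still blocks every admissible $E$.
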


\begin{proof}
By Lemma \ref{lem-3.2} (iv), $\thf_{2}([t+1])$, $\thf_1([t+3]\setminus \{i\})$ are cross-intersecting for any $i\in [t+1]$. Since $(k+1,k+3,\ldots, 3k-2t-3)\in \thf_{2}([t+1])$, by shiftedness $\thf_1([t+3]\setminus \{i\})\subset \{[t+4,k+1]\}$ for any $i\in [t+1]$.
By \eqref{ineq-5.1}, we are left to show that $[t+1]\subset F$ for any $F\in \hf\setminus (\hf_0\cup \hf_1)$. If  there exists $F\in \hf\setminus (\hf_0\cup \hf_1)$ with $[t+1]\not\subset F$, then by shiftedness $[k+2]\setminus \{t+1,k+1\}\in \hf$. However, $([k+2]\setminus \{t+1,k+1\})\cap ([t+1]\cup \{k+1,k+3,\ldots,3k-2t-3\}) =[t]$, contradicting the fact that $\hf$ is 2-wise $(t+1)$-intersecting. Thus $[t+1]\subset F$ for any  $F\in \hf\setminus (\hf_0\cup \hf_1)$ and $\hf\subset \hb(n,k,t+3)$ follows.
\end{proof}

Thus we may assume $3\leq j\leq k-t-3$.
By the maximality of $j$, we infer that
\[
(t+3+j+1,t+5+j+1,\ldots, 2k-t+j)\notin \thf_{2}([t+1]).
\]
By Proposition \ref{prop-F78} we have
\[
|\thf_{2}([t+1])| \leq \binom{n-t-3}{k-t-1}- \binom{n-t-j-2}{k-t-1}+\binom{n-t-j-2}{k-t-2}.
\]
Let $\hb:=\hb(n,k,t+3)$.
By \eqref{ineq-5.1}, $\hf([t+1],[t+1])\subset \hb([t+1],[t+1])$. Since
$|\tilde{\hb}_2([t+1])|=\binom{n-t-3}{k-t-1}-\binom{n-k-1}{k-t-1}$,
we infer that
\begin{align}\label{ineq-neq3.4}
|\hb\setminus\hf| \geq  \binom{n-t-j-2}{k-t-1}- \binom{n-k-1}{k-t-1}-\binom{n-t-j-2}{k-t-2}.
\end{align}

On the other hand, by $\hf([t+1],[t+1])\subset \hb([t+1],[t+1])$ we have
\begin{align}\label{ineq-neq3.5}
|\hf\setminus \hb|\leq |\{F\in \hf\colon [t+1]\not\subset F\}|.
\end{align}
By Lemma \ref{lem-3.2} (iv),  $\thf_1([t+3]\setminus \{i\}),\thf_{2}([t+1])$ are cross-intersecting for any $i\in [t+1]$. It follows that
\[
[t+4,t+1+j]\cup (t+2+j, t+4+j,\ldots, 2k-t-j+2)\notin \thf_1([t+3]\setminus \{i\}).
\]
By Proposition \ref{prop-F78},
\begin{align}\label{ineq-new5.1}
\sum_{1\leq i\leq t+1}|\thf_1([t+3]\setminus \{i\})| \leq (t+1)\binom{n-t-3}{k-t-j-2}.
\end{align}

By Lemma \ref{lem-3.2} (i), $\thf_{2}([t+1]), \thf_i(T)$ are cross $(2i-1)$-intersecting for $i=4,\ldots,t+3$ and for any $T\in \binom{[t+3]}{t+3-i}$. Since $(t+3+j,t+5+j,\ldots, 2k-t+j)\in \thf_{2}([t+1])$, we infer that  for any $T\in \binom{[t+3]}{t+3-i}$,
\[
[t+4,t+j+4i-3]\cup (t+j+4i-2,t+j+4i,\ldots ) \notin \thf_i(T).
\]
Thus,
\[
|\thf_i(T)| \leq \binom{n-t-3}{k-(t+3-i)-(j+4i-5)-1} = \binom{n-t-3}{k-t-j-3i+1}.
\]
Therefore,
\begin{align}\label{ineq-new5.2}
\sum_{4\leq i\leq t+3} |\hf_i| \leq \sum_{4\leq i\leq t+3} \binom{t+3}{i} \binom{n-t-3}{k-t-j-3i+1}.
\end{align}

By Lemma \ref{lem-3.2} (ii) for any $T\in \binom{[t+3]}{t}$, $\thf_{2}([t+1])$, $\thf_3(T)$ are cross 3-intersecting if $T=[t]$ and also $\thf_{2}([t+1]), \thf_3(T)$ are cross $5$-intersecting if $T\neq [t]$. Consequently,
\begin{align*}
    &[t+4,t+j+5]\cup (t+j+6,t+j+8,\ldots ) \notin \thf_3([t]),\\[3pt]
    &[t+4,t+j+9]\cup (t+j+10,t+j+12,\ldots ) \notin \thf_3(T), \mbox{ for }T\neq [t].
\end{align*}
By Proposition \ref{prop-2.1},
\begin{align}\label{ineq-new5.3-1}
    |\hf_3|\leq  \left(\binom{t+3}{3}-1\right)\binom{n-t-3}{k-t-j-8} +\binom{n-t-3}{k-t-j-4}
\end{align}

By Lemma \ref{lem-3.2} (iii), for any $T\in \binom{[t+3]}{t+1}$, $\thf_{2}([t+1])$, $\thf_{2}(T)$ are cross 2-intersecting if $T\cap [t+1]=[t]$ and  $\thf_{2}([t+1]),\thf_{2}(T)$ are cross 3-intersecting if $[t]\not\subset T$. Then
\begin{align*}
    &[t+4,t+j+3]\cup (t+j+4,t+j+6,\ldots ) \notin \thf_2(T), \mbox{ for }T\cap [t+1]=[t], \\[3pt]
    &[t+4,t+j+5]\cup (t+j+6,t+j+8,\ldots ) \notin \thf_2(T), \mbox{ for }[t]\not\subset T.
\end{align*}
By Proposition \ref{prop-F78},
\begin{align}\label{ineq-new5.3-2}
    \sum_{T\in \binom{[t+3]}{t+1}\setminus \{[t+1]\}}|\thf_{2}(T)|\leq  2\binom{n-t-3}{k-t-j-3}+\left(\binom{t+3}{2}-3\right)\binom{n-t-3}{k-t-j-5}.
\end{align}
Adding \eqref{ineq-new5.1}, \eqref{ineq-new5.2}, \eqref{ineq-new5.3-1} and \eqref{ineq-new5.3-2},
\begin{align*}
 &\quad |\hf\setminus \hb| \\[3pt]
 &\leq \sum_{1\leq i\leq t+1}|\hf_1([t+3]\setminus \{i\})| +\sum_{3\leq i\leq t+3} |\hf_i| +\sum_{T\in \binom{[t+3]}{t+1}\setminus \{[t+1]\}}|\thf_{2}(T)|\\[3pt]
 &\leq  (t+1)\binom{n-t-3}{k-t-j-2}+\sum_{2\leq i\leq t+3} \binom{t+3}{i} \binom{n-t-3}{k-t-j-3i+1} +2\binom{n-t-3}{k-t-j-3}\\[3pt]
 &\qquad +\binom{n-t-3}{k-t-j-4}.
 \end{align*}
Note that $\frac{\theta^{3/2}}{\sqrt{t+3}} <\frac{1}{3}$ for $t\geq 55$. For $i\geq 3$,
 \begin{align*}
\binom{t+3}{i} \binom{n-t-3}{k-t-j-3i+1} &\leq \binom{t+3}{i-1} \frac{t+4-i}{i} \alpha^3\binom{n-t-3}{k-t-j-3(i-1)+1}\\[3pt]
&\leq  \frac{\theta^{3/2}}{3\sqrt{t+3}} \binom{t+3}{i-1}\binom{n-t-3}{k-t-j-3(i-1)+1}\\[3pt]
&<\frac{1}{9}\binom{t+3}{i-1}\binom{n-t-3}{k-t-j-3(i-1)+1}.
 \end{align*}
Using \eqref{ineq-3.0} we infer that
\begin{align*}
\sum_{2\leq i\leq t+3} \binom{t+3}{i} \binom{n-t-3}{k-t-j-3i+1} &\leq \frac{9}{8}\binom{t+3}{2} \binom{n-t-3}{k-t-j-5} \\[3pt]
&\leq \frac{9(t+3)(t+2)}{16} \alpha^3\binom{n-t-3}{k-t-j-2}\\[3pt]
&\leq \frac{9\theta^{3/2}}{16}\sqrt{t+3}\binom{n-t-3}{k-t-j-2}.
\end{align*}
Moreover,  by $\alpha <\frac{1}{5}$ for $t\geq 47$,
\[
2\binom{n-t-3}{k-t-j-3}+\binom{n-t-3}{k-t-j-4} \leq (2\alpha +\alpha^2)\binom{n-t-3}{k-t-j-2} <\frac{1}{2}\binom{n-t-3}{k-t-j-2}.
\]
Thus, for $t\geq 50$
\begin{align*}
 |\hf\setminus \hb| &\leq \left(t+1+\frac{9\theta^{3/2}}{16}\sqrt{t+3}+\frac{1}{2}\right)\binom{n-t-3}{k-t-j-2}<\frac{6t}{5}\binom{n-t-3}{k-t-j-2}.
\end{align*}
In view of \eqref{ineq-neq3.4} and \eqref{ineq-neq3.5}, we are left to show that
\begin{align}\label{ineq-5.2}
\frac{6}{5}t\binom{n-t-3}{k-t-j-2}<\binom{n-t-j-2}{k-t-1}- \binom{n-k-1}{k-t-1}-\binom{n-t-j-2}{k-t-2}.
\end{align}
Note that by $j\leq k-t-3$ we have
\[
\binom{n-t-j-2}{k-t-1}- \binom{n-k-1}{k-t-1} \geq \binom{n-t-j-3}{k-t-2}+\binom{n-t-j-4}{k-t-2}.
\]
Then by $t+j+3\leq k$,
\[
\frac{n-k-j}{n-t-j-2} >\frac{n-k-j-1}{n-t-j-3} =1-\frac{k-t-2}{n-t-j-3}>1-\frac{k-t}{n-(t+j+3)}\geq 1-\frac{k-t}{n-k}\geq 1-\alpha.
\]
 Thus,
\begin{align*}
 &\quad \  \binom{n-t-j-3}{k-t-2}+\frac{1}{3}\binom{n-t-j-4}{k-t-2}\\[3pt] &> \left(\frac{n-k-j}{n-t-j-2}+ \frac{1}{3}\frac{(n-k-j)(n-k-j-1)}{(n-t-j-2)(n-t-j-3)}\right)\binom{n-t-j-2}{k-t-2}\\[3pt]
  &> \left(1-\alpha+ \frac{1}{3}\left(1-\alpha\right)^2\right)\binom{n-t-j-2}{k-t-2}.
\end{align*}
For $t\geq 47$,  we have $\alpha <\frac{1}{5}$ and $1-\alpha+ \frac{1}{3}\left(1-\alpha\right)^2>1$. It follows that
\[
\binom{n-t-j-3}{k-t-2}+\frac{1}{3}\binom{n-t-j-4}{k-t-2}>\binom{n-t-j-2}{k-t-2}.
\]
Thus, to show \eqref{ineq-5.2} it suffices to show that
\[
\frac{6}{5}t\binom{n-t-3}{k-t-j-2}<\frac{2}{3}\binom{n-t-j-4}{k-t-2}.
\]
Note that
\[
\frac{9}{5}t\binom{n-t-3}{k-t-j-2} \leq \frac{9}{5}t \alpha^j\binom{n-t-3}{k-t-2}<\frac{9}{5} \theta \alpha^{j-2}\binom{n-t-3}{k-t-2}.
\]
Since $j\leq k-t-3$, for $4\leq x\leq j+4\leq k-t+1$
\[
\frac{\binom{n-t-x}{k-t-2}}{\binom{n-t-x+1}{k-t-2}} =\frac{n-k-x+3}{n-t-x+1}\geq \frac{n-k-(k-t) +2}{n-t-(k-t)}\geq  1-\frac{k-t}{n-k} \geq 1-\alpha.
\]
Thus it suffices to show that
\[
\left(1-\alpha\right)^{j+1} \geq \frac{9}{5}\theta\alpha^{j-2}.
\]
Direct calculation shows $(1-\alpha)^4>\frac{9}{5}\theta\alpha$ for $t= 55$. Since $1-\alpha\geq \alpha$ for $t\geq 5$, $\left(1-\alpha\right)^{j+1} \geq \frac{9}{5} \theta \alpha^{j-2}$ holds  for $t\geq 55$ and for all $j\geq 3$. This concludes the proof of the theorem.
\end{proof}

\section{Concluding remarks}

\begin{definition}
For $t\geq 1$, $r\geq 2$, $s\geq 0$ and $n\geq t+rs$ define
\[
\hb_s(n,r,t) =\{B\subset [n]\colon |[t+rs]\setminus B|\leq s\}.
\]
\end{definition}

These families are clearly $r$-wise $t$-intersecting. Half a century ago the first author conjectured that these are the largest non-uniform $r$-wise $t$-intersecting families:

\begin{conjecture}[\cite{F76}]
Suppose that $\hh\subset 2^{[n]}$ is $r$-wise $t$-intersecting, then
\begin{align}\label{ineq-6.1}
|\hh| \leq \max_{0\leq s\leq \frac{n-t}{r}} |\hb_s(n,r,t)|.
\end{align}
\end{conjecture}

Unfortunately \eqref{ineq-6.1} is still wide open in general. However the ``Erd\H{o}s-Ko-Rado" case is known to be true (cf. \cite{F91}, \cite{F19}). Note that $\hb_0(n,r,t)=\{B\subset [n]\colon [t]\subset B\}$.

\vspace{5pt}
{\noindent\bf Exact EKR Theorem for non-uniform families (\cite{F91}, \cite{F19})}
Suppose that $\hh\subset 2^{[n]}$ is $r$-wise $t$-intersecting, $1\leq t\leq 2^r-r-1$. Then $|\hh|\leq 2^{n-t}$. Moreover, equality holds if and only if $\hh\cong \hb_0(n,r,t)$ or $t=2^{r}-r-1$, $n\geq t+r$ and $\hh\cong \hb_1(n,r,t)$.
\vspace{5pt}

It is possible to restate this result in the following form.

\begin{theorem}[\cite{F91}, \cite{F19}]
Suppose that $\hh\subset 2^{[n]}$ is $r$-wise $t$-intersecting and $|\hb_1(n,r,t)|\leq  2^{n-t}$. Then $|\hh|\leq 2^{n-t}$ as well.
\end{theorem}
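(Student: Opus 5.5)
The plan is to derive this statement directly from the Exact EKR Theorem for non-uniform families quoted just above (\cite{F91}, \cite{F19}). The hypothesis $|\hb_1(n,r,t)|\leq 2^{n-t}$ should turn out to be exactly the numerical condition $t\leq 2^r-r-1$ of that theorem. So the only real work is to compute $|\hb_1(n,r,t)|$ and compare it with $2^{n-t}$.

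First step: count $\hb_1(n,r,t)$. Since $\hb_1(n,r,t)$ is only defined for $n\geq t+r$, we work under that assumption throughout. Its members are the sets $B\subset[n]$ that miss at most one element of $[t+r]$. On $[t+r]$ there are $1+(t+r)$ admissible traces: the full set, or the full set minus one element. The part of $B$ in $[t+r+1,n]$ is arbitrary. Hence
\[
|\hb_1(n,r,t)| = (t+r+1)\,2^{n-t-r}.
\]

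Second step: compare with $2^{n-t}=2^r\cdot 2^{n-t-r}$. The inequality $|\hb_1(n,r,t)|\leq 2^{n-t}$ is equivalent to $t+r+1\leq 2^r$, that is, $t\leq 2^r-r-1$. Together with $t\geq 1$, which is part of the definition of $r$-wise $t$-intersecting, this is precisely the range $1\leq t\leq 2^r-r-1$ covered by the Exact EKR Theorem for non-uniform families.

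Third step: invoke that theorem to conclude $|\hh|\leq 2^{n-t}$ for every $r$-wise $t$-intersecting $\hh\subset 2^{[n]}$. The equality characterization also matches the reformulation. Equality $|\hb_1(n,r,t)|=2^{n-t}$ holds exactly when $t=2^r-r-1$, which is the case where $\hb_1(n,r,t)$ appears as a second extremal family.

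I expect no genuine obstacle, since the mathematical content is entirely in the cited theorem. The only point needing care is the range of parameters: the statement tacitly requires $n\geq t+r$ for $\hb_1(n,r,t)$ to be defined, and $t\geq 1$ so that the cited theorem applies. I would state both explicitly so that the equivalence between the two formulations is exact.
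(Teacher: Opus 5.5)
Your proposal is correct and matches the paper, which presents this theorem simply as a restatement of the quoted Exact EKR Theorem for non-uniform families. The count $|\hb_1(n,r,t)|=(t+r+1)2^{n-t-r}$ shows the hypothesis is equivalent to $t\leq 2^r-r-1$, so the cited theorem applies directly; your explicit assumptions $n\geq t+r$ and $t\geq 1$ are implicit in the paper.
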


Let us state the uniform analogue of this result using the notation $\hh^{(k)}=\{H\in \hh\colon |H|=k\}$.

\begin{conjecture}[Exact EKR]\label{conj-6.4}
Let $\hf\subset \binom{[n]}{k}$ be $r$-wise $t$-intersecting, $(r-1)(n-k)>k-t$ and suppose that $|\hb_1^k(n,r,t)|\leq  \binom{n-t}{k-t}$. Then
\begin{align}\label{ineq-6.2}
|\hf| \leq  \binom{n-t}{k-t}.
\end{align}
\end{conjecture}

During the paper, we used the slightly simpler notation $\ha(n,k,r+t)$ for $\hb_1^{(k)}(n,r,t)$.

In Theorem \ref{thm-main0} we showed that Conjecture \ref{conj-6.4} holds for the case $r=3$, $k\geq 667$ and $k\geq t\geq 46$. In an earlier paper \cite{FW25} we proved \eqref{ineq-6.2} for $n\geq \left(2.5 t\right)^{\frac{1}{r-1}}(k-t)+k$. The case $t=1$ is relatively simple but already for $t=2$, $r=3$ it is not known whether Conjecture \ref{conj-6.4} is true.

Solving the problem in full generality under the additional restriction that $\hf$ is non-trivial appears to be hopeless.

\end{document}